\theoremstyle{mnabodyrm}
\newtheorem{modstep}[theorem]{Modelization Step}
\newcommand{\mylabel}[2]{#2\def\@currentlabel{#2}\label{#1}}
\begin{document}



\section{Introduction}
Self-organized rhythmic oscillation in stochastic systems has been studied in different 
contexts in biology and physics. Cerf, Dai Pra,  Formentin and Tovazzi \cite{CDFT-21} study 
spin systems with nearest neighbour interaction along a circuit which show the following 
behaviour. Starting from magnetisation (all spins equal to $1$, say) a rather long waiting time 
is needed to observe flipping of a first spin, rapidly followed --in virtue of the structure of the 
interaction-- by spins flipping at successive neighbouring sites along the circuit which leads 
to  magnetisation of opposite sign (all spins equal to $-1$, say). Then again, with roles of signs 
interchanged, a rather long time is needed to observe a first spin flipping back, rapidly 
followed by successive neighbours, and the circuit returns to its initial state of magnetization. 
This creates a self-organized rhythmic oscillation in a Markovian system which is 
homogeneous in time. The authors can prove that this oscillation is  persistent.

Ditlevsen and L\"ocherbach~\cite{DL-17} study circuits of blocks of neurons --where 
interaction 
between successive blocks is of mean-field type-- where neurons are modelled through 
Hawkes processes, i.e. Poissonian point processes where intensity is a function of past 
spiking activity in preceding blocks. Non-Markovian in general, specific memory kernels 
however allow an expansion of the structure into Markovian cascades, \textit{i.e.} finite 
sequences of 
successive Markovian steps corresponding to every block. Now Markovian ergodicity tools are 
again at hand, together with mean field limits in large blocks.  When inhibition and excitation is 
properly balanced, the authors prove that in the limit the circuit behaves as a deterministic 
system enjoying the following properties (Theorem~3 in Section~4 of \cite{DL-17}): i) exactly 
one equilibrium point exists for the system; ii) this equilibrium point is unstable, iii) there is a 
stable periodic orbit for the system; iv) other periodic orbits may exist, but at most in finite 
number.

The aim of the present paper is to show that similar patterns of self-organized rhythmic 
oscillation can be observed in the spike trains of certain circuits of interacting stochastic 
Hodgkin-Huxley neurons, under suitable balance of excitation and inhibition according to a 
block structure in the circuit, and under careful determination of suitable 'levels of noise'.

In this view, a first ingredient is to work out, for single stochastic Hodgkin-Huxley neurons 
receiving input  $\vartheta dt + dX_t$ where $X$ is an Ornstein-Uhlenbeck process with 
back-driving force $\tau>0$ and volatility $\sigma>0$, a notion of 'quiet 
behaviour' and a 
notion of 'regular spiking' such that, for suitable pairs $(\tau,\sigma)$ characterizing the level 
of 'noise' and with large probability, 'regular spiking' will be observed for suitably large values 
$\vartheta_2$ of the signal, and 'quiet behaviour'  for suitably small values $\vartheta_1$.  
Quiet behaviour on a time interval of certain length will be defined through a comparison with 
Poisson processes of very low intensity, and regular spiking in terms of quantiles of interspike 
times clustering around their median. 
Simulations provide evidence that it is not sufficient to choose the signal alone strong enough 
(\textit{e.g.}, such that 
trajectories of a deterministic Hodgkin-Huxley neuron with constant input would be attracted 
by a stable orbit, for almost all initial conditions, in presence of an unstable equilibrium point), 
or  the signal alone weak enough (\textit{e.g.}, such that trajectories of a deterministic 
Hodgkin-Huxley neuron with constant input would be attracted by a stable fixed point, for 
almost all initial conditions): the essential condition in stochastic neurons is an interplay, in 
dependence on suitable pairs of values $\vartheta_1 < \vartheta_2$ for the signal, between 
volatility (sufficiently  small) and back-driving force (sufficiently strong).

The second ingredient is to associate to every neuron in the circuit an output process, solution 
to an Ornstein-Uhlenbeck type SDE driven by the point process of its spikes, with positive 
back-driving force. We have to transform this output into input for a successor neuron. In a 
circuit of $N=ML$ neurons, ordered in $M$ blocks containing $L$ neurons each, and where 
we 
count neurons modulo $N$ around the circuit, the output of neuron $i{-}1$ transforms into 
input for neuron $i$ in the following way: neurons which occupy first positions in their 
respective blocks receive bounded inhibitory input, neurons having their predecessor in the 
same block (\textit{i.e.} all others) receive bounded excitatory input. With suitable choice of 
bounded 
monotone functions $\Phi_{\mathrm{inh}}$ (decreasing) and  $\Phi_{\mathrm{exc}}$ 
(increasing), writing 
$U^{(j)}$ for the output produced by neuron $j$, the input which neuron $i$ receives is thus 
\[
\Phi_{\mathrm{inh}}(U^{(i-1)}) \quad\mbox{ if } i = 1 \quad\mbox{\textrm{modulo}}\quad L  , 
\quad 
\Phi_{\rm 
exc}(U^{(i-1)}) \quad\mbox{else}   
\]
(in particular, neuron $1$ receives input $\Phi_{\mathrm{inh}}(U^{(N)})$, counting modulo 
$N$ 
around the circuit). 
Suitably balanced and under the condition that the number $M$ of blocks is odd, simulations 
make appear oscillating patterns of spiking activity around the circuit in the sense that blocks 
of neurons flip from regular spiking regime into quiet regime and from quiet regime into 
regular spiking regime. This creates a slow rhythmic oscillation of activity patterns around the 
circuit which seems persistent. Simulation results as represented in 
Figures~\ref{fig:circuit_UJ0=0} and \ref{fig:circuit_Uj0random} illustrate this phenomenon, 
already for 
small values of $L$ and $M$, and show that rhythmic oscillation establishes itself rather 
rapidly.

We have no proof that the observed slow rhythmic oscillation of spiking activity around the 
circuit is indeed persistent. A heuristic argument however 
might be as follows.  Think of a deterministic system of dimension $N=LM$ where variables 
$t \to x_j(t)$ represent in some way a spiking activity of neuron $j$ as a function of time, 
and where counting modulo $N$ the interaction scheme is of type   
\[
\dfrac{d x_i}{dt}(t) = 
\begin{cases}
	-c x_i(t) - f(x_{i-1}(t))  &\quad\mbox{ if } \quad i  = 1 \quad \mbox{ \textrm{modulo}} \quad 
	L     \\
	-c x_i(t) + f(x_{i-1}(t))   &\quad\mbox{else}  
\end{cases}
\]
with $f$ some smooth function which is close to the truncation function $x \to (x\vee 
{-}1)\wedge 1$, and with $c>0$ some constant. Under the condition that i) $M$ is odd and ii) 
$c$ is small enough, this system evolves on a finite number of periodic orbits, and at least one 
periodic orbit is stable. This is again Theorem 3 in section 4 of \cite{DL-17}, the system 
$t \to (x_1(t),\ldots,x_N(t))$ being a simplified version of the deterministic limit system 
considered 
there. In simulations under random initial conditions, the slow rhythmic oscillation of activity 
patterns in the circuit of stochastic Hodgkin-Huxley neurons constructed above looks very 
much like those in the deterministic system $t \to (x_1(t),\ldots,x_N(t))$.

The main effort of the present paper is on modelization and balance, a key ingredient being a 
rigorous definition of notions such as quiet behaviour and regular spiking in stochastic 
Hodgkin-Huxley 
neurons with constant signal. Proofs that the oscillating behaviour observed in finite circuits of 
stochastic Hodgkin-Huxley neurons is indeed persistent (certainly perturbed by randomness 
from time to time but always re-establishing itself rather rapidly) remains an open and  
challenging problem.

The present paper is organized as follows.  
At the core of the paper, Section~\ref{sect:circuits} is devoted to the construction of circuits 
which exhibit self-organized oscillation. This section does not contain proofs. As a preparation 
for Section ~\ref{sect:circuits}, all other sections except the first one  (which recalls some 
known facts for classical deterministic Hodgkin-Huxley neurons with constant input) focus on 
the single stochastic Hodgkin-Huxley neuron as a Harris recurrent strong Markov process. 

In particular, Section~\ref{sect:stoch_HH} introduces the stochastic Hodgkin-Huxley neuron 
with constant signal, sketches its  ergodicity properties and states some strong laws of large 
numbers, in particular for empirical distribution functions of spiking patterns. Proofs based on 
artificially defined life cycles through Nummelin splitting (methods as in H\"opfner, 
L\"ocherbach and Thieullen \cite{HLT-16, HLT-16a, HLT-17}) are collected  in an 
appendix Section~\ref{sect:proof_prop_2.5}. 
The process of 'output' of a stochastic neuron, key tool in view of modelization of interactions 
along circuits, 
is defined in Section~\ref{output}. 
Quantifying a comparison with Poisson processes of very low intensity, 
Section~\ref{sect:quiet} defines quiet behaviour of a single stochastic Hodgkin-Huxley neuron 
with 
constant signal as an event whose probability depends on the noise level and the value of the 
signal. 
Section~\ref{sect:regular_spiking} defines regular spiking in terms of quantiles of interspike 
times which cluster sufficiently close to  their median. Consequences for the output process 
(based on two conjectures --which we believe realistic-- on concentration properties of the 
limit of empirical distribution function for interspike times) are discussed in an appendix 
Section~\ref{conjectures}.

The limit theorems of Section~\ref{sect:stoch_HH}  and the notions in 
Sections~\ref{sect:quiet}--\ref{sect:regular_spiking} form the basis  for the construction of 
circuits of 
interacting stochastic Hodgkin-Huxley neurons in Section~\ref{sect:circuits}, whereas 
Appendices~\ref{sect:proof_prop_2.5}--\ref{conjectures} may be left for further reading. 

\texttt{R} code underlying our simulations in the present paper is provided under 
\url{http://modeldb.yale.edu/267611}.

\section{Deterministic Hodgkin-Huxley model with constant rate of 
input}\label{sect:det_HH}

Hodgkin-Huxley models  \cite{HH-52} play an important role in 
neuroscience and are considered as realistic models for the spiking behaviour of neurons. For 
an overview see Izhikevich~\cite{Izh-07} and Ermentrout and Terman~\cite{ET-10}. 
The classical deterministic model with constant rate of input is a $4$-dimensional dynamical 
system with variables $(V,n,m,h)$  
\begin{equation}\label{det_HH}
\begin{cases}
		dV_t =  a dt - F(V_t,n_t,m_t,h_t) dt \\
		dn_t =  [ \alpha_n(V_t) (1-n_t) - \beta_n(V_t) n_t ] dt \\
		dm_t =  [ \alpha_m(V_t) (1-m_t) - \beta_m(V_t) m_t ] dt \\
		dh_t =  [ \alpha_h(V_t) (1-h_t) - \beta_h(V_t) h_t ] dt 
\end{cases}
\end{equation}
where $a>0$ is a constant.  We define the functions $F$ and $\alpha_j$, $\beta_j$, 
$j\in\{n,m,h\}$, as in Izhikevich \cite[pp. 37--38]{Izh-07}  (different choices for the constants 
exist in the literature):  
\begin{equation}\label{eq:F}
	F(v,n,m,h) := 36 n^4 (v+12) +120 m^3 h (v -120) + 0.3(v-10.6), 
\end{equation}
\begin{align}
	\nonumber
		\alpha_n(v)  &= \frac{0.1-0.01v }{\exp(1-0.1v)-1} &\beta_n(v)  &=  0.125\exp(-v/80) 
		,  \\\label{eq:alphabeta}
		\alpha_m(v) &= \frac{2.5-0.1v}{\exp(2.5-0.1v)-1} &\beta_m(v) &=  4\exp(-v/18) ,  \\
		\alpha_h(v) &= 0.07\exp(-v/20)  &\beta_h(v) &= \frac{1}{\exp(3-0.1v)+1} . 
	\nonumber
\end{align}

The variable $V$ takes values in $\mathbb{R}$ and models the membrane potential in the 
single neuron. The variables $n$, $m$, $h$ are termed gating variables (or internal variables) 
and take values in $[0,1]$. The state space for this system is  
$E_4:=\mathbb{R}\times[0,1]^3$. In the sequel, for reasons which will appear in 
Section~\ref{sect:stoch_HH}, we  shall speak of  $a>0$ in \eqref{det_HH} as a 'signal' and try 
to avoid 
the term 'input rate' established in the literature on deterministic Hodgkin-Huxley models.

Depending on the value of the signal $a>0$, the following behaviour of the deterministic 
dynamical system is known, see  Ermentrout and Terman \cite[ pp.\ 63--66]{ET-10}. As there, 
see \eqref{internal_var_equil} and \eqref{F_infty} below, \eqref{det_HH} admits a unique 
equilibrium for every $a>0$. 
On some interval $(0,a_1)$ this equilibrium point is stable. There is a bistability interval 
$\mathbbm{I}_{\mathrm{bs}}=(a_1,a_2)$ on which a stable orbit coexists with a stable 
equilibrium 
point, and  
an interval $(a_2,a_3)$ on which the orbit is stable whereas the equilibrium point is unstable. 
As $a$ approaches from below the right endpoint $a_3$ of the last interval, orbits are 
collapsing towards  equilibrium; for  $a>a_3$ the equilibrium point is again stable. 
Here $0<a_1<a_2<a_3<\infty$ are suitably determined endpoints for intervals. Equilibrium 
points and orbits depend on the value of $a$. For biologically relevant values of the signal, 
evolution of the system along an orbit represents a remarkably fast 'large excursion' of all 
variables of the system, in particular of the membrane potential $V$, and is called a spike. 
Throughout the paper, we exclude unrealistically large values of the signal. 

In simulations --Euler schemes with time step $0.001$ where the starting point is selected at 
random, according to the uniform law on $(-12,120){\times}(0,1)^3$-- the equilibrium point 
appears to be globally attractive on  $(0,a_1)$. The orbit appears to be globally attractive  on  
$(a_2,a_3)$. On the bistability interval $\mathbbm{I}_{\mathrm{bs}}=(a_1,a_2)$, the 
behaviour of 
the 
system depends on the choice of the starting value: simulated trajectories either go to the 
equilibrium point,  or are attracted by the orbit.\footnote{
	Rinzel and Miller~\cite{RM-80} show that a branch of unstable periodic orbits exists on the 
	bistability interval, bifurcating below $a_2=\sup \mathbbm{I}_{\mathrm{bs}}$ and rejoining 
	the 
	stable orbits at $a_1=\inf \mathbbm{I}_{\mathrm{bs}}$ (in the sense of decreasing values of 
	$a$). 
	However, such orbits will not be seen in simulations with randomly chosen starting point.
} 

For our choice of the constants in equations \eqref{eq:F}--\eqref{eq:alphabeta} --those of 
Izhikevich \cite{Izh-07}, slightly different from both Ermentrout and Terman  \cite{ET-10} and 
Rinzel and Miller \cite{RM-80}-- simulations (here we refer to those\footnote{
	In unpublished work \cite{Hum-19}, Hummel simulated $1000$ trajectories with randomly 
	selected starting point for each value of the signal $a$ under consideration. Starting values 
	were sampled independently from the uniform law on $(-12,120){\times}(0,1)^3$. As a 
	function of $a$ (given in the first row of the tables below), the following relative number 
	(given in the second row) of trajectories was found to converge to an orbit. First, for $a$ in 
	$[8.0,9.0)$,  
	\begin{center}
		\begin{tabular}{l|l|l|l|l|l|l|l|l|l}
			\hline
			8.0 & 8.1 & 8.2 & 8.3 & 8.4 & 8.5 & 8.6 & 8.7 & 8.8 & 8.9 \\
			\hline
			0.986 & 0.991 & 0.993 & 0.998 & 1.000 & 1.000 & 1.000 & 1.000 & 1.000 & 1.000 \\
			\hline 
		\end{tabular}
	\end{center} 
	which determines an approximate location $\approx 8.4$  of the right endpoint of 
	$\mathbbm{I}_{\mathrm{bs}}$.     
	Then, considering values of $a$ in $[5.0,6.0)$  
	\begin{center}
		\begin{tabular}{l|l|l|l|l|l|l|l|l|l}
			\hline
			5.0 & 5.1 & 5.2 & 5.3 & 5.4 & 5.5 & 5.6 & 5.7 & 5.8 & 5.9 \\
			\hline
			0.000 & 0.000 & 0.000 & 0.751 & 0.777 & 0.803 & 0.821 & 0.835 & 0.847 & 0.863 \\
			\hline 
		\end{tabular}
	\end{center} 
	and looking in more detail into the interval $[5.2,5.3)$  
	\begin{center}
		\begin{tabular}{l|l|l|l|l|l|l|l|l|l}
			\hline
			5.20 & 5.21 & 5.22 & 5.23 & 5.24 & 5.25 & 5.26 & 5.27 & 5.28 & 5.29 \\
			\hline
			0.000 & 0.000 & 0.000 & 0.000 & 0.000 & 0.687 & 0.726 & 0.736 & 0.743 & 0.748 \\
			\hline 
		\end{tabular}
	\end{center} 
	the left endpoint of  $\mathbbm{I}_{\mathrm{bs}}$ is found between $5.24$ and $5.25$;   
	a closer look into $[5.24,5.25)$    
	\begin{center}
		\begin{tabular}{l|l|l|l|l|l|l|l|l|l}
			\hline
			5.240 & 5.241 & 5.242 & 5.243 & 5.244 & 5.245 & 5.246 & 5.247 & 5.248 & 5.249 \\
			\hline
			0.000 & 0.012 & 0.073 & 0.134 & 0.214 & 0.294 & 0.368 & 0.435 & 0.547 & 0.653 \\
			\hline 
		\end{tabular}
	\end{center} 
	shows that $\inf \mathbbm{I}_{\mathrm{bs}}$ is in fact very close to $5.24$. All values 
	above are 
	quoted from \cite{Hum-19}, p.\ 10 there. 
} 
done by \cite{Hum-19}) locate  $\inf \mathbbm{I}_{\mathrm{bs}} =a_1$ between $5.24$ and 
$5.25$, 
and $\sup \mathbbm{I}_{\mathrm{bs}}=a_2$ close to~$8.4$ . The value of $a_3$ is close 
to~$163.5$ 
and thus (given the shape of orbits when $a$ approaches $a_3$ from below) far beyond 
biological relevance.  Already at $a=5.5$,  about $80\%$ of all trajectories with randomly 
selected starting point  are attracted to the orbit, this  percentage being increasing in 
$a\in\mathbbm{I}_{\mathrm{bs}}$.

Equilibria for the deterministic Hodgkin-Huxley system \eqref{det_HH} can be determined as 
follows (\cite[pp.\ 38--39]{Izh-07}, and \cite{ET-10}). For a fixed value $v$ of the membrane 
potential, write 
\begin{equation}\label{internal_var_equil}
	\left( n_\infty(v) , m_\infty(v) ,  h_\infty(v) \right)  :=
	\left(\frac{\alpha_n}{\alpha_n+\beta_n}(v),   \frac{\alpha_m}{\alpha_m+\beta_m}(v),  
	\frac{\alpha_h}{\alpha_h+\beta_h}(v)  \right) 
\end{equation}
and define $F_\infty : \mathbb{R}\to\mathbb{R}$ by 
\begin{equation}\label{F_infty}
	F_\infty(v) := F\left(v, n_\infty(v), m_\infty(v),  h_\infty(v) \right). 
\end{equation}
Numerical evidence (see also the remarks in \cite[pp.\ 64--65]{ET-10})  shows that 
$F_\infty$ 
is strictly increasing on compacts. 
Thus, for signal $a>0$ in  \eqref{det_HH}--\eqref{eq:alphabeta}, solving 
\[
a \stackrel{!}{=} F_\infty(v^{\{a\}})
\]
we determine $v^{\{a\}}$ and thus the equilibrium point 
\begin{equation}\label{eq:equilibrium} 
	\left(v^{\{a\}} , n^{\{a\}} , m^{\{a\}} , h^{\{a\}}  \right) := 
	\left(v^{\{a\}}, n_\infty(v^{\{a\}}), m_\infty(v^{\{a\}}), h_\infty(v^{\{a\}})    \right) 
\end{equation}
of the deterministic system \eqref{det_HH} with signal $a>0$.

\begin{figure}[h]
	\centering
	\includegraphics[width=0.95\linewidth]{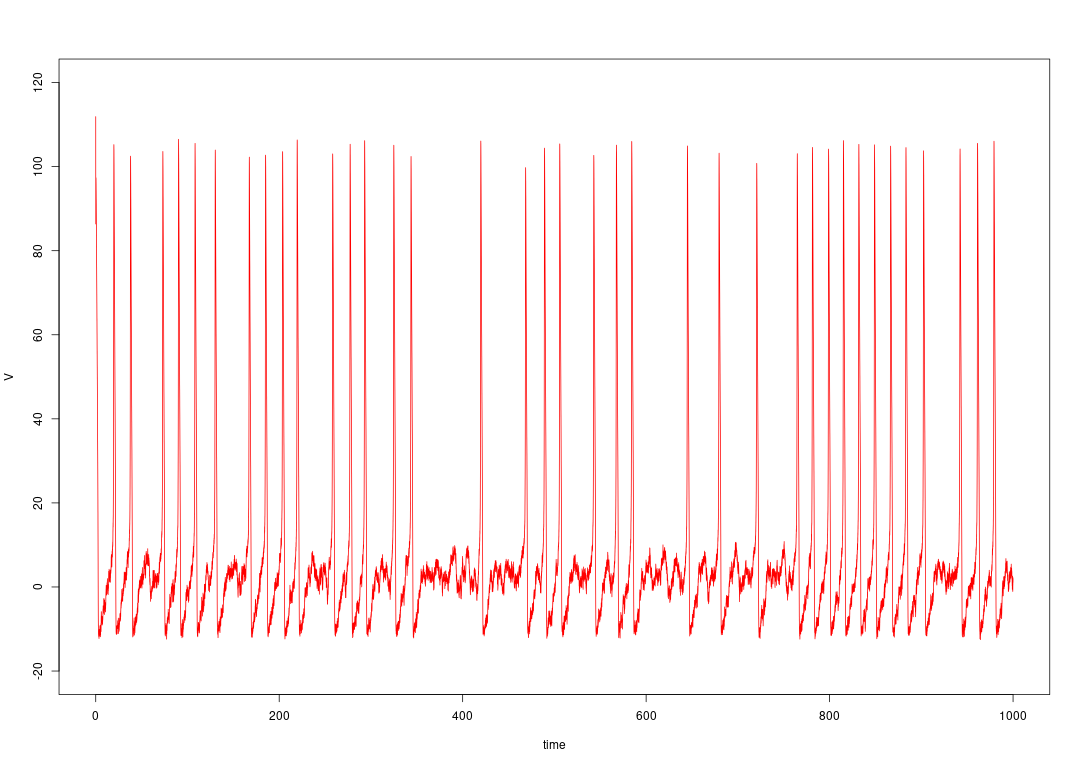}
	\caption{Membrane potential in a simulated stochastic Hodgkin-Huxley neuron 
	$\mathbb{X}^{(\vartheta,\tau,\sigma)}$. The value of the signal is $\vartheta=4$, and the 
	parameters for the Ornstein Uhlenbeck process $X$ are $\tau=0.5$ and $\sigma=2.5$. 
	Initial conditions are selected at random, according to the stationary law of $X$ and  
	according to the uniform law on $(-12,120){\times}(0,1)^3$ for $(V,n,m,h)$. 
		The simulation was done using an Euler scheme with equidistant steps \(0.001\).  
		Under signal $a=\vartheta=4$, a deterministic Hodgkin-Huxley neuron \eqref{det_HH} 
		would be attracted to the stable equilibrium point  \eqref{eq:equilibrium}.  In the 
		stochastic Hodgkin-Huxley neuron \eqref{def_Y_HH}--\eqref{stoch_HH} of 
		Section~\ref{sect:stoch_HH}, 'noise' -- in the combination of parameters considered 
		here--  turns 
		out to be strong enough to create frequent spikes.
		Here and in all graphics below, no attempt is made towards  'biologically relevant scaling' 
		of the time axis.
	} 
\label{fig:1a}
\end{figure}

\section{Stochastic Hodgkin-Huxley with constant signal}\label{sect:stoch_HH} 

Prepare an Ornstein-Uhlenbeck process with back-driving force $\tau>0$ and volatility 
$\sigma>0$ 
\begin{equation}\label{def_X_HH}
	dX_t = - \tau X_t dt + \sigma dW_t. 
\end{equation}
In order to feed noise into the system \eqref{det_HH}, we replace  $a dt$ in the first 
equation of the deterministic model \eqref{det_HH} by increments $dY_t$ of a stochastic 
process  
\begin{equation}\label{def_Y_HH}
	Y_t = \vartheta t  + X_t  \quad,\quad t\ge 0 
\end{equation}
with some constant $\vartheta>0$, unique strong solution to\footnote{
	SDE \eqref{sde_Y_HH} for the accumulated input $(Y_t)_t$ has remarkable statistical 
	consequences: 
	in the stochastic Hodgkin-Huxley model with constant signal, knowing $\mathbb{X}_0$ and 
	observing the membrane potential $V$ continuously in time, the signal  $\vartheta$ can be 
	estimated at a better rate than the back-driving force $\tau$
	(\cite{Hoe-21}, corollary 2 in section 4). 
} 
the stochastic differential equation 
\begin{equation}\label{sde_Y_HH}
	dY_t = \vartheta( 1 + \tau t ) dt   -   \tau  Y_t dt  + \sigma dW_t. 
\end{equation}
Together with equation \eqref{def_Y_HH} or \eqref{sde_Y_HH}, the system 
\begin{equation}\label{stoch_HH}
	\left\{
	\begin{array}{lll}
		dV_t &= & dY_t - F(V_t,n_t,m_t,h_t) dt \\
		dn_t &= & [ \alpha_n(V_t) (1-n_t) - \beta_n(V_t) n_t ] dt \\
		dm_t &= & [ \alpha_m(V_t) (1-m_t) - \beta_m(V_t) m_t ] dt \\
		dh_t &= & [ \alpha_h(V_t) (1-h_t) - \beta_h(V_t) h_t ] dt
	\end{array} 
	\right.     
\end{equation}
defines a stochastic Hodgkin-Huxley model. We speak of $\vartheta>0$ as the 'signal' 
encoded 
in  the system. 
In contrast to the deterministic case, the behaviour of the biological variables 
\eqref{stoch_HH} in the stochastic system is not only governed by the value of the signal 
$\vartheta$, but also depends on the level of 'noise', \textit{i.e.} on the values of the volatility 
$\sigma$ 
and the back-driving force $\tau$ in the Ornstein-Uhlenbeck process \eqref{def_X_HH}. 
We thus consider the  $5$-dimensional strong Markov process 
\begin{equation}\label{doublescript_X}
	\mathbb{X}^{(\vartheta,\tau,\sigma)} = (\mathbb{X}^{(\vartheta,\tau,\sigma)}_t)_{t\ge 0}  
	\quad,\quad  \mathbb{X}^{(\vartheta,\tau,\sigma)}_t := 
	\left( V_t , n_t , m_t , h_t   , X_t \right)_{t\ge 0} 
\end{equation}
having state space $E:=\mathbb{R}\times[0,1]^3\times\mathbb{R}$. $E$ is endowed with its 
Borel-$\sigma$-field $\mathcal{E}$. The process \eqref{doublescript_X} is homogeneous in 
time, with encoded signal $\vartheta$ and semigroup  
\[
(P^{(\vartheta,\tau,\sigma)}_t)_{t\ge 0}
\]
of transition probabilities on $(E,\mathcal{E})$. We suppress superscripts when the context is 
clear.

A biological interpretation of the system \eqref{doublescript_X} is as follows. Assume that the 
neuron which we consider is part of a large and active network. Then a structure 
$dY_t=\vartheta dt + dX_t$ of input  reflects  superposition of some global level 
$\vartheta>0$ of excitation in the network with noise in the single neuron. Noise in the single 
neuron arises as a result of accumulation and decay of a large number of small postsynaptic 
charges, caused by incoming spikes --registered at synapses, excitatory or inhibitory, present 
in large number and in complex spatial distribution along the dendritic tree of the neuron, then 
undergoing decay and finally being summed up--  which the neuron receives from a large 
number of other neurons active within the same network.

Throughout the paper, we exclude by convention unrealistically large values of the signal 
$\vartheta$: 
orbits in a deterministic system with same value of the signal always should admit a biological  
interpretation in terms of a spike. Even if we write '$\vartheta>0$' below, this is the same 
caveat as in Section~\ref{sect:det_HH}.

\subsection{Positive Harris recurrence}\label{subsect:harris}

We discuss ergodicity properties of  systems  \eqref{doublescript_X}. 
For stochastic Hodgkin-Huxley models encoding signals which are deterministic periodic 
functions, positive Harris recurrence is established in  H\"opfner, L\"ocherbach and Thieullen 
\cite{HLT-16,HLT-17}, see also \cite{HLT-16a}, and including more general settings in 
Holbach \cite{Hol-20}.  Our case 
of constant signal $\vartheta>0$ is then essentially a corollary. For background on Harris 
recurrence see Nummelin \cite{Num-78, Num-85}, Azema, Duflo and Revuz 
\cite{ADR-69}, Revuz and Yor \cite{RY-91}, H\"opfner and L\"ocherbach \cite{HL-03}.

\begin{theorem}\label{theo:21}
The following holds for every $\vartheta>0$, $\tau>0$, $\sigma>0$ : 

\begin{enumerate}[a)]
\item \label{item:21a}
The process $(\mathbb{X}^{(\vartheta,\tau,\sigma)}_t)_{t\ge 0}$ is positive Harris 
recurrent. 
\item \label{item:21b} For arbitrary step size $0<T<\infty$, grid chains 
$(\mathbb{X}^{(\vartheta,\tau,\sigma)}_{kT})_{k\in\mathbbm{N}_0}$ are positive Harris 
recurrent. 
\item \label{item:21c} For arbitrary step size $0<T<\infty$, chains of path segments 
\[
(\mathbb{X}^{(\vartheta,\tau,\sigma)}_{[kT,(k+1)T]})_{k\in\mathbbm{N}_0} \quad,\quad 
\mathbb{X}^{(\vartheta,\tau,\sigma)}_{[kT,(k+1)T]} := 
(\mathbb{X}^{(\vartheta,\tau,\sigma)}_t)_{ kT \le t \le (k+1)T }
\]
with values in the space of continuous functions  $C([0,T],E)$
are positive Harris recurrent. 
\item  \label{item:21d} For every $0<T<\infty$, there is some 'small set' $C\in\mathcal{E}$ of 
strictly positive 
invariant measure, some probability law $\nu$ on $(E,\mathcal{E})$, and some $\alpha\in 
(0,1)$ such that Nummelin's minorization  condition holds:   
\[
P^{(\vartheta,\tau,\sigma)}_T(x,dy) \ge \alpha \mathbbm{1}_C(x) \nu(dy) \quad\mbox{ for 
all 
}  x, y 
\mbox{ in } E.  
\]
\end{enumerate}
\end{theorem}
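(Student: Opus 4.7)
The plan is to derive Theorem~\ref{theo:21} as a specialization of the positive Harris recurrence established for stochastic Hodgkin-Huxley neurons with periodic signal in \cite{HLT-16, HLT-17}: a constant signal $\vartheta>0$ is a degenerate periodic signal and several arguments simplify. I would organize the proof around part~(d), from which (a)--(c) then follow by standard machinery.

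\emph{Local Doeblin property (part (d)).} Fix $T>0$. Although $\mathbb{X}^{(\vartheta,\tau,\sigma)}$ is driven by a single Brownian motion, noise propagates from $X$ into $V$ through the input $dY_t$ and from $V$ into each gating variable through the smooth, strictly positive coefficients $\alpha_j(V),\beta_j(V)$. I would verify H\"ormander's bracket condition at every point of $E$---the computation is performed in \cite{HLT-16} and carries over verbatim, since the drift differs from the periodic case only in the $V$-direction---yielding a jointly smooth Lebesgue density $(x,y)\mapsto p_T(x,y)$ for $P^{(\vartheta,\tau,\sigma)}_T$. Strict positivity of $p_T$ on $E\times E$ follows from the Stroock--Varadhan support theorem: along a suitably chosen control path for $W$ one first steers $V$, and through $V$ the gating variables $(n,m,h)$, towards prescribed values, then uses a short terminal segment to adjust $X$. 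From positivity and continuity of $p_T$, any compact set $C\subset E$ is a small set: choosing a compact $C'\subset E$, set $\alpha:=\int_{C'}\inf_{x\in C}p_T(x,y)\,dy$ and $\nu(dy):=\alpha^{-1}\inf_{x\in C}p_T(x,y)\mathbbm{1}_{C'}(y)\,dy$; this yields the minorization in (d).

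\emph{Foster--Lyapunov drift and positive recurrence.} For part~(b), I would combine the above minorization with a drift condition for the grid chain. The gating variables $(n,m,h)$ are confined to $[0,1]^3$ since $\alpha_j,\beta_j\ge 0$ force the drift to point inward at the boundary. The OU component $X$ has drift $-\tau X$ with $\tau>0$, so its square is a strict Lyapunov function. For $V$, the dominant terms of $F(V,n,m,h)$ in \eqref{eq:F} are linear in $V$ with coefficients bounded below by a positive constant uniformly in $(n,m,h)\in[0,1]^3$, hence a test function of the form $\Psi(v,n,m,h,x)=v^2+\kappa x^2$ with $\kappa$ large enough decreases in $P^{(\vartheta,\tau,\sigma)}_T$-expectation outside a sufficiently large compact set. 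Minorization plus drift give positive Harris recurrence of the skeleton via Meyn--Tweedie, equivalently via the Nummelin splitting construction employed in the appendix Section~\ref{sect:proof_prop_2.5}, proving~(b). This also supplies existence of a compact small set of strictly positive invariant measure, which was the only part of (d) not yet fully established.

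\emph{Lifting to continuous time and path segments.} Part~(a) follows from~(b) by the Az\'ema--Duflo--Revuz criterion for strong Markov processes on Polish spaces, using that $\mathbb{X}^{(\vartheta,\tau,\sigma)}$ has continuous paths and thus no fixed discontinuities. For part~(c) the sequence of $C([0,T],E)$-valued path segments is itself a strong Markov chain whose left-endpoint projection is the $T$-skeleton; Harris recurrence of the latter, together with continuity of the path-space transition kernel, transfers to the former. The one genuinely non-routine step is the controllability argument inside the support theorem, where $(n,m,h)$ can only be steered indirectly through $V$ with the additional constraint of reaching a prescribed terminal $(v',x')$ at time $T$; this is the main technical content of \cite{HLT-16, HLT-17} and, since the construction there does not exploit periodicity of the signal, it applies to the present constant-signal setting without change.
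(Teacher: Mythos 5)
Your proposal takes essentially the same route as the paper: view the constant signal $\vartheta$ as a degenerate (constant-period) case and reduce directly to the positive Harris recurrence, minorization, and path-segment-chain results for stochastic Hodgkin--Huxley systems with OU-type input established in \cite{HLT-16,HLT-17,HK-10}. You additionally sketch the internal machinery of those references (H\"ormander condition and support theorem for the minorization, a Lyapunov drift for positive recurrence of the skeleton, the Az\'ema--Duflo--Revuz lift to continuous time), which is consistent with what they do, while the paper leaves it as a direct citation after observing that replacing $F$ by $F-\vartheta$ and encoding the zero periodic signal does not affect those proofs.
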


\begin{proof}
	Fix $(\vartheta,\tau,\sigma)$ and write the system \eqref{doublescript_X} as
	\begin{equation}\label{rewriting_stoch_HH}
		\left\{
		\begin{array}{lll}
			dV_t &= & dX_t - [F-\vartheta](V_t,n_t,m_t,h_t) dt \\
			dj_t &= & [ \alpha_j(V_t) (1-j_t) - \beta_j(V_t) j_t ] dt 
			\quad,\quad j\in\{n,m,h\}. 
		\end{array} 
		\right.     
	\end{equation}
	Then \eqref{rewriting_stoch_HH} amounts to a simplified variant of the OU-type 
	Hodgkin-Huxley systems investigated in \cite{HLT-16,HLT-17}:  we can replace 
	the function  $F$ there by $\widetilde{F}:=F{-}\vartheta$, a change which does not affect 
	the 
	proofs in \cite{HLT-16} and \cite{HLT-17}, and then encode $\widetilde{S}\equiv 0$ in place 
	of the 
	deterministic periodic function into the drift of the diffusion process in \cite{HLT-16} and  
	\cite{HLT-17}.  
	This allows to view any $0<T<\infty$ as a period for our stochastic system 
	\eqref{doublescript_X};  the coefficients remain real analytic. 
	
	Now  \ref{item:21a}) and \ref{item:21b}) correspond to Theorems 2.7 and 2.2 in 
	\cite{HLT-16}. The lower bound  \ref{item:21d}) 
	corresponds to 
	Theorem~4 and Corollary~1 (together with Sections \ref{subsection:53} 
	--\ref{subsection:55}) in 
	\cite{HLT-17}, 
	or to step 1) in the proof to Theorem 2.9 in \cite{HLT-16}. Assertion \ref{item:21c}) on path 
	segments 
	follows from \ref{item:21b}) as in Theorem~2.1 of H\"opfner and Kutoyants \cite{HK-10}. 
\end{proof}

Let $Q^{(\vartheta,\tau,\sigma)}_x$ denote the law of the process 
$(\mathbb{X}^{(\vartheta,\tau,\sigma)}_t)_{t\ge 0}$ starting from $x\in E$, a probability 
measure on the canonical path space $(C,\mathcal{C})$ of continuous functions $[0,\infty)\to 
E$. We equip $(C,\mathcal{C})$  with the right-continuous filtration 
$\mathbb{G}=(\mathcal{G}_t)_{t\ge 0}$ generated by the canonical process. This allows to 
view the single neuron $\mathbb{X}^{(\vartheta,\tau,\sigma)}$  in \eqref{doublescript_X} as 
a canonical process on a  canonical path space under $Q^{(\vartheta,\tau,\sigma)}_x$.  As 
usual, 'almost surely' means  $Q^{(\vartheta,\tau,\sigma)}_x$-almost surely for every $x\in E$.

Positive Harris recurrence Theorem~\ref{theo:21}~\ref{item:21a})+\ref{item:21b}) implies that 
there exists a unique 
invariant probability  
$\mu^{(\vartheta,\tau,\sigma)}$ on the state space $(E,\mathcal{E})$, that sets of positive 
invariant probability are visited infinitely often (for events $F\in\mathcal{E}$ and arbitrary 
$0<T<\infty$, 
\[
\mbox{if}\quad\mu^{(\vartheta,\tau,\sigma)}(F) > 0 \quad : \quad \int_0^\infty 
\mathbbm{1}_F(\mathbb{X}^{(\vartheta,\tau,\sigma)}_s) ds = \infty 
\quad,\quad 
\sum_{k=1}^\infty \mathbbm{1}_F(\mathbb{X}^{(\vartheta,\tau,\sigma)}_{kT}) = \infty
\]
almost surely), and implies strong laws of large numbers: for functions $h:E\to\mathbb{R}$ 
which belong to $L^1(\mu^{(\vartheta,\tau,\sigma)})$, limits 
\begin{equation}\label{as_limits_1}
	\frac{1}{t} \int_0^t h(\mathbb{X}^{(\vartheta,\tau,\sigma)}_s)  ds \longrightarrow  \int_E h 
	d\mu^{(\vartheta,\tau,\sigma)}
	\quad,\quad  t\to\infty 
\end{equation}
and, for every  $0<T<\infty$ fixed, 
\begin{equation}\label{as_limits_2}
	\frac{1}{n} \sum_{k=1}^n h(\mathbb{X}^{(\vartheta,\tau,\sigma)}_{kT})   \longrightarrow  
	\int_E 
	h  d\mu^{(\vartheta,\tau,\sigma)}
	\quad,\quad  n\to\infty  
\end{equation}
exist almost surely. Consider also $Q^{(\vartheta,\tau,\sigma)}_x$ restricted to 
$(C_T,\mathcal{C}_T)$ where $C_T$ is the path space of continuous functions 
$[0,T]\to\mathbb{R}$, and write 
\[
Q^{(\vartheta,\tau,\sigma)}_\mu   := Q^{(\vartheta,\tau,\sigma)}_{ 
\mu^{(\vartheta,\tau,\sigma)} } = \int_E \mu^{(\vartheta,\tau,\sigma)}(dx) 
Q^{(\vartheta,\tau,\sigma)}_x   
\]
for the probability law on $(C,\mathcal{C})$ or on $(C_T,\mathcal{C}_T)$ under which the 
canonical process $\mathbb{X}$ on $(C,\mathcal{C})$ or on $(C_T,\mathcal{C}_T)$ is a 
stationary process. 
If for some $T$ a function $g:C_T\to\mathbb{R}$ belongs to  
$L^1(Q^{(\vartheta,\tau,\sigma)}_\mu)$, then 
\begin{equation}\label{as_limits_3}
	\frac{1}{n} \sum_{k=0}^{n-1} 
	g\left(\mathbb{X}^{(\vartheta,\tau,\sigma)}_{[kT,(k+1)T]}\right)   
	\longrightarrow 
	\int_{C_T} g d Q^{(\vartheta,\tau,\sigma)}_\mu   
	\quad,\quad  n\to\infty  
\end{equation}
holds almost surely,   by positive Harris recurrence Theorem~\ref{theo:21}~\ref{item:21c}) for 
path 
segment chains.

\subsection{Spike times and spiking patterns }\label{sect:output_HH}

In a  stochastic Hodgkin-Huxley neuron \eqref{doublescript_X}, 
we define 'beginning' of a spike as the time of upcrossing of the $m$-variable over the 
$h$-variable, and  'end' of the same spike as the time of re-downcrossing of $m$ under $h$, 
as in (2.15) in \cite{HLT-16}: the membrane potential $V$ reaches its maximum on this time 
interval almost immediately after the  upcrossing of $m$ over $h$. We 
define the spike time as the time of the beginning
of a spike (the time at which the membrane potential attains a local 
maximum does not have the structure of a stopping time). Then the spike train emitted by the 
stochastic Hodgkin-Huxley neuron \eqref{doublescript_X} 
is the sequence $(\tau_j)_{j\ge 1}$ of  $\mathbb{G}$-stopping times    
\begin{equation}\label{def_spiketimes}
	\tau_j := \inf\{ t>\sigma_{j-1} : m_t>h_t \}, \sigma_j := \inf\{ t>\tau_j+\delta_0 : 
	m_t<h_t \}, j\ge 1,\sigma_0=\tau_0=0   
\end{equation} 
with convention $\inf\{\emptyset\} = \infty$, and with $\delta_0>0$ arbitrarily small but fixed. 
The sequence $(\tau_j)_j$ is strictly increasing and tends to $\infty$;  we associate the 
counting process   
\begin{equation}\label{def_N}
	N = (N_t)_{t\ge 0} \quad,\quad 
	N_t := \sum_{j\ge 1} \mathbbm{1}_{(0,t]}(\tau_j). 
\end{equation}   
Interspike times $( \tau_j-\tau_{j-1} )_{j\ge 1}$ have no reason to be independent or 
identically distributed, and $N$ has no reason to be a Poisson process (in particular, for every 
$t$, the random variable $N_t$ is bounded by construction). This does not exclude the 
possibility that on 
compact time intervals, under certain parameter configurations, $N$ with large probability 
may 
look quite similar to a Poisson process. 

~
\begin{proposition} \label{prop:22}
~
\begin{enumerate}[a)]
\item 
\label{item:22a}
For $0<T<\infty$ fixed, almost surely within the family of time 
intervals $\{ [kT,(k{+}1)T] : k\in\mathbbm{N}_0 \}$,   
an infinite number of intervals will contain spikes and an infinite number of intervals will remain 
spikeless.
\item \label{item:22b}
The empirical distribution functions $\widehat{H}_n$ associated to the first $n$ observed 
interspike times 
\[
\tau_{\ell+1}{-}\tau_\ell  \quad,\quad 1\le \ell\le n 
\]
converge almost surely as $n\to\infty$, uniformly on $[0,\infty)$, to the distribution function 
$H^{(\vartheta,\tau,\sigma)}$ of some probability law which is concentrated on $(0,\infty)$. 
\end{enumerate}
\end{proposition}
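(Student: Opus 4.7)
The plan is to exploit the Harris recurrence properties from Theorem~\ref{theo:21} together with the strong laws of large numbers \eqref{as_limits_2}--\eqref{as_limits_3}.

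For part \ref{item:22a}), I would work with the chain of path segments $(\mathbb{X}^{(\vartheta,\tau,\sigma)}_{[kT,(k+1)T]})_{k\in\mathbbm{N}_0}$ of Theorem~\ref{theo:21}~\ref{item:21c}) and identify in $C([0,T], E)$ the two measurable events $F_{\mathrm{sp}}$ (``at least one spike in the sense of \eqref{def_spiketimes} occurs on $[0,T]$'') and $F_{\mathrm{qu}} := F_{\mathrm{sp}}^c$. Positive Harris recurrence of the path-segment chain implies that almost surely infinitely many segments fall in each event, provided both carry strictly positive mass under the invariant law $Q^{(\vartheta,\tau,\sigma)}_\mu$ on $(C_T, \mathcal{C}_T)$. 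Strict positivity of $Q^{(\vartheta,\tau,\sigma)}_\mu(F_{\mathrm{sp}})$ should follow from the minorization bound \ref{item:21d}): a support-of-diffusion argument provides trajectories on $[0,T]$ exhibiting an upcrossing of $m$ over $h$, hence $\nu(F_{\mathrm{sp}}) > 0$. Symmetrically $\nu(F_{\mathrm{qu}}) > 0$ via trajectories kept in the region $\{m < h\}$ throughout $[0,T]$, starting for instance from a configuration with low $V$.

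For part \ref{item:22b}), introduce the discrete post-spike chain $Z_j := \mathbb{X}^{(\vartheta,\tau,\sigma)}_{\tau_j}$ for $j \ge 0$. The strong Markov property identifies $(Z_j)_j$ as a homogeneous Markov chain, and positive Harris recurrence of the continuous-time process combined with part \ref{item:22a}) (guaranteeing infinitely many spikes almost surely) transfers positive Harris recurrence to $(Z_j)_j$. Let $\mu_\tau$ denote its unique invariant probability. The pair sequence $(Z_{j-1}, \tau_j - \tau_{j-1})_{j \ge 1}$ is itself a Markov chain, and the SLLN applied to the bounded indicator $\mathbbm{1}_{\{\tau_j - \tau_{j-1} \le t\}}$ yields, for every fixed $t > 0$,
\[
\widehat{H}_n(t) \;=\; \frac{1}{n} \sum_{j=1}^n \mathbbm{1}_{\{\tau_j - \tau_{j-1} \le t\}} \;\longrightarrow\; H^{(\vartheta,\tau,\sigma)}(t)
\]
almost surely, where $H^{(\vartheta,\tau,\sigma)}$ denotes the distribution function of the interspike time under the invariant law of the pair chain. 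Concentration of $H^{(\vartheta,\tau,\sigma)}$ on $(0,\infty)$ is ensured by $\tau_{j+1} - \tau_j \ge \delta_0 > 0$, a direct consequence of \eqref{def_spiketimes}.

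The final step is the upgrade from pointwise to uniform convergence on $[0,\infty)$. Since $\widehat{H}_n$ and $H^{(\vartheta,\tau,\sigma)}$ are non-decreasing and the limit is a proper distribution function with $H^{(\vartheta,\tau,\sigma)}(\infty) = 1$, the classical monotone Glivenko--Cantelli argument (pointwise convergence on a countable dense subset being promoted to uniform convergence by monotonicity) applies. I expect the principal technical obstacle to be the support-theoretic argument underlying strict positivity of both $\nu(F_{\mathrm{sp}})$ and $\nu(F_{\mathrm{qu}})$ in \ref{item:22a}): one must verify that the coupled diffusion--gating dynamics can reach both a spike configuration and a silent configuration within time $T$ starting from the support of $\nu$, which in turn rests on the real-analytic structure of the coefficients already used in the proofs of \cite{HLT-16,HLT-17}.
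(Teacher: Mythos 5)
The paper proves this by citation: with the rewriting of the system done in the proof of Theorem~\ref{theo:21}, parts~\ref{item:22a}) and~\ref{item:22b}) correspond to Theorems~2.8 and~2.9 of \cite{HLT-16}. Your reconstruction of part~\ref{item:22a}) is in line with what that reference does: pass to the path-segment chain, show both $F_{\mathrm{sp}}$ and $F_{\mathrm{qu}}$ have strictly positive mass under $Q_\mu$ via a support argument, then invoke recurrence. You also correctly flag the support-of-diffusion step as the place where the real work lives.

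For part~\ref{item:22b}) there is a gap. You assert that positive Harris recurrence of the continuous-time process, together with the almost-sure occurrence of infinitely many spikes, \emph{transfers} positive Harris recurrence to the post-spike chain $Z_j := \mathbb{X}_{\tau_j}$. That transfer is not automatic: the $\tau_j$ are stopping times rather than grid times, so neither an invariant probability for $(Z_j)_j$ nor a minorization condition for $(Z_j)_j$ follows for free from Theorem~\ref{theo:21}; the step you state as an immediate consequence is essentially the content of the theorem being proved. The mechanism that actually supplies it --- the one used in \cite{HLT-16} and visible in this paper's Appendix~\ref{sect:proof_prop_2.5} through the proof of Lemma~\ref{lem:61}, which generalizes exactly this statement to $L$-tuples --- is a renewal decomposition via Nummelin splitting: use the regeneration times $R_n$ from Theorem~\ref{theo:21}~\ref{item:21d}) to cut the trajectory into life cycles, record the spikes and interspike times inside each life cycle (a finite, integrable quantity since interspike times are bounded below by $\delta_0$ and life-cycle durations have finite mean), and apply the strong law of large numbers to the i.i.d.\ or two-by-two independent life-cycle sums. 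Your post-spike-chain route could probably be made rigorous, but only by constructing essentially that same renewal structure, so it is a restatement of what needs to be shown rather than an alternative argument. The Glivenko--Cantelli upgrade and the observation that $\tau_{j+1} - \tau_j \ge \delta_0$ forces concentration on $(0,\infty)$ are the same in both approaches and you handle them correctly.
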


\begin{proof} 
	As in the proof of Theorem~\ref{theo:21}, \ref{item:21a}) and \ref{item:21b}) correspond to 
	Theorems 2.8 and 2.9 in 
	\cite{HLT-16}. 
\end{proof}
In the stationary regime, the Laplace transform of the number of spikes observed on path 
segments 
of length $T$
\begin{equation}\label{LT_NT_stationary}
	\psi^{(\vartheta,\tau,\sigma)}_T(\lambda) := 
	E_\mu^{(\vartheta,\tau,\sigma)}\left( e^{ - 
	\lambda N_T} \right) \quad,\quad \lambda\ge 0 
\end{equation}
and the probability that a path segment of length $T$ contains less than $v$ spikes  
\begin{equation}\label{DF_NT_stationary}
	F^{(\vartheta,\tau,\sigma)}_T(v) := Q_\mu^{(\vartheta,\tau,\sigma)}\left(  N_T \le 
	v \right) 
	\quad,\quad v\ge 0  
\end{equation}
are of interest for statistical purposes. Whereas there is no hope to get explicit expressions for 
the left hand sides of \eqref{LT_NT_stationary} or \eqref{DF_NT_stationary}, Harris 
recurrence provides us with empirical Laplace transforms and  empirical distribution functions. 
\newpage
\begin{proposition}\label{prop:23}
Under $(\vartheta,\tau,\sigma)$, for $0<T<\infty$ fixed,   
\begin{enumerate}[a)]
	\item \label{item:33a} the functions  
	\[
	\widehat{\psi}_{n,T}(\lambda) := \frac{1}{n} \sum_{k=1}^n e^{-\lambda ( N_{kT} -N_{(k-1)T} 
		) 
	} \quad,\quad \lambda\ge 0 
	\quad,\quad  n\in\mathbbm{N}    
	\]
	converge  uniformly on $[0,\infty)$, almost surely as $n\to\infty$, to the Laplace transform 
	$\psi^{(\vartheta,\tau,\sigma)}_T$ in \eqref{LT_NT_stationary};  
	\item \label{item:33b} the functions 
	\[
	\widehat{F}_{n,T}(v) := \frac{1}{n} \sum_{k=1}^n  \mathbbm{1}_{\{N_{kT} -N_{(k-1)T}  \le 
		v\}} 
	\quad,\quad v\ge 0
	\quad,\quad  n\in\mathbbm{N}   
	\]
	converge uniformly on $[0,\infty)$, almost surely as $n\to\infty$, to the distribution function 
	$F^{(\vartheta,\tau,\sigma)}_T$ in \eqref{DF_NT_stationary}.
\end{enumerate}
\end{proposition}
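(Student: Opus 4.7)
The plan is to apply the strong law of large numbers for chains of path segments \eqref{as_limits_3}, available by Theorem~\ref{theo:21}~\ref{item:21c}). A preliminary deterministic bound simplifies both parts at once: the constraint $\sigma_j > \tau_j+\delta_0$ built into \eqref{def_spiketimes} together with $\tau_{j+1}>\sigma_j$ forces $\tau_{j+1}-\tau_j > \delta_0$, so that in any window of length $T$ the number of spikes is at most $K := \lceil T/\delta_0 \rceil + 1$ on every path. Consequently $N_T-N_0$ takes values in the finite set $\{0,1,\ldots,K\}$, and every functional of the form $g(\omega)=\varphi(N_T(\omega)-N_0(\omega))$ with bounded $\varphi$ belongs trivially to $L^1(Q^{(\vartheta,\tau,\sigma)}_\mu)$. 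Applying \eqref{as_limits_3} to each of the finitely many indicator functionals $\omega \mapsto \mathbbm{1}_{\{N_T(\omega)-N_0(\omega) = v\}}$, $v \in \{0,\ldots,K\}$, one obtains on a single event $\Omega^* \subset C$ of full $Q^{(\vartheta,\tau,\sigma)}_\mu$-measure the simultaneous convergence
\[
\widehat{q}_{n,T}(v) := \frac{1}{n}\sum_{k=1}^n \mathbbm{1}_{\{N_{kT}-N_{(k-1)T}=v\}} \;\longrightarrow\; q_T(v) := Q^{(\vartheta,\tau,\sigma)}_\mu(N_T=v) , \quad v\in\{0,\ldots,K\} .
\]

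Both parts of the proposition then fall out of this finite-dimensional convergence without extra work. For part~\ref{item:33a}), writing $\widehat{\psi}_{n,T}(\lambda)=\sum_{v=0}^{K} e^{-\lambda v}\widehat{q}_{n,T}(v)$ and $\psi^{(\vartheta,\tau,\sigma)}_T(\lambda)=\sum_{v=0}^{K} e^{-\lambda v} q_T(v)$, one uses $|e^{-\lambda v}|\le 1$ to estimate
\[
\sup_{\lambda \ge 0}\bigl|\widehat{\psi}_{n,T}(\lambda)-\psi^{(\vartheta,\tau,\sigma)}_T(\lambda)\bigr|
\;\le\; \sum_{v=0}^{K} |\widehat{q}_{n,T}(v)-q_T(v)| \longrightarrow 0 \quad\text{on } \Omega^* .
\]
For part~\ref{item:33b}), since $N_T$ is integer-valued and bounded by $K$, both $\widehat{F}_{n,T}$ and $F^{(\vartheta,\tau,\sigma)}_T$ are step functions constant on $[v,v{+}1)$ for $v\in\{0,\ldots,K{-}1\}$ and equal to $1$ for $v \ge K$, so the analogous bound $\sup_{v\ge 0}|\widehat{F}_{n,T}(v) - F^{(\vartheta,\tau,\sigma)}_T(v)| \le \sum_{v=0}^{K}|\widehat{q}_{n,T}(v)-q_T(v)|$ gives uniform convergence on $\Omega^*$.

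I anticipate no genuine obstacle: the whole argument rests on \eqref{as_limits_3} together with the deterministic upper bound on $N_T$ already noted in the text after \eqref{def_N}. The only point requiring explicit verification --- already implicit in the proof of Proposition~\ref{prop:22} --- is the $\mathcal{C}_T$-measurability of the functional $\omega \mapsto N_T(\omega)-N_0(\omega)$ on the segment space, which holds because the stopping times $\tau_j$, $\sigma_j$ are hitting times of the open sets $\{m>h\}$ and $\{m<h\}$ by the continuous coordinates of the segment.
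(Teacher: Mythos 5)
Your proof is correct, and it follows a genuinely different route than the paper's own argument, though both rest on the same engine: the strong law of large numbers \eqref{as_limits_3} for path--segment chains, available by Theorem~\ref{theo:21}~\ref{item:21c}). The paper applies \eqref{as_limits_3} directly to $e^{-\lambda(N_{kT}-N_{(k-1)T})}$ (resp.\ $\mathbbm{1}_{\{N_{kT}-N_{(k-1)T}\le v\}}$) to get, for each fixed $\lambda$ (resp.\ $v$), pointwise almost-sure convergence to $\psi_T(\lambda)$ (resp.\ $F_T(v)$), and then upgrades to uniformity on $[0,\infty)$ by invoking the Glivenko--Cantelli argument, using monotonicity and boundedness of the limit functions; this implicitly requires passing through a countable dense set and intersecting the corresponding full-measure events. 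Your version instead exploits the deterministic bound $N_{kT}-N_{(k-1)T}\le K:=\lceil T/\delta_0\rceil +1$ (a consequence of interspike times being $>\delta_0$ by construction in \eqref{def_spiketimes}, already noted after \eqref{def_N}) to decompose both $\widehat{\psi}_{n,T}$ and $\widehat{F}_{n,T}$ as finite linear combinations of the $K+1$ empirical frequencies $\widehat{q}_{n,T}(v)$. You then apply \eqref{as_limits_3} only to the $K+1$ indicator functionals $\mathbbm{1}_{\{N_T - N_0 = v\}}$ and obtain uniformity in $\lambda$ and $v$ for free, via the $\ell^1$ bound $\sum_{v=0}^{K}|\widehat{q}_{n,T}(v)-q_T(v)|$, on a single full-measure event. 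This is shorter and sidesteps the Glivenko--Cantelli machinery entirely; what it buys is that the countability and continuity-point bookkeeping that GC needs become irrelevant once the range of $N_T$ is finite. The trade-off is that your argument is specific to integer-valued, uniformly bounded spike counts, whereas the paper's GC route would also apply if $N_T$ were merely integrable; for the present proposition this restriction is harmless. Your remark on $\mathcal{C}_T$-measurability of the segment functional $N_T-N_0$ is the same observation the paper makes implicitly when treating $N_{kT}-N_{(k-1)T}$ as $h(\mathbb{X}_{[(k-1)T,kT]})$, and is resolved exactly as you say.
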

\begin{proof}
	For $T$, $v$ and $\lambda$ fixed, both $e^{ -\lambda (N_{kT} -N_{(k-1)T}) }$ or 
	$\mathbbm{1}_{\{ N_{kT} -N_{(k-1)T} \le v \}} $ are bounded functionals 
	$h\left( \mathbb{X}_{[(k-1)T,kT ]} \right)$ of paths segments, and pointwise 
	convergence almost surely  holds in virtue of Theorem~\ref{theo:21}~\ref{item:21c}) and 
	\eqref{as_limits_3}: 
	\[
	\frac{1}{n} \sum_{k=1}^n  h\left( \mathbb{X}_{[(k-1)T,kT]}  \right)
	\longrightarrow E_\mu^{(\vartheta,\tau,\sigma)}\left( h(\mathbb{X}_{[0,T]}) \right) 
	\quad,\quad n\to\infty. 
	\]
	The limit functions $F^{(\vartheta,\tau,\sigma)}_T$ of \eqref{DF_NT_stationary} and 
	$\psi^{(\vartheta,\tau,\sigma)}_T$ of \eqref{LT_NT_stationary} are monotonous and 
	bounded, so uniformity on $[0,\infty)$ follows as in the classical proof of the 
	Glivenko-Cantelli Theorem. 
\end{proof}

\begin{proposition}\label{prop:24}
Under $(\vartheta,\tau,\sigma)$, as $t\to\infty$, the limit 
\[
\lim_{t\to\infty} \frac{1}{t} N_t = E_\mu^{(\vartheta,\tau,\sigma)}\left( N_1 \right)
\]
exists almost surely. 
\end{proposition}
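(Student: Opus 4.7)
The plan is to realize $t^{-1}N_t$ as a Cesàro average of bounded path-segment functionals, and then apply the ergodic theorem for path-segment chains already available through Theorem~\ref{theo:21}~\ref{item:21c}) and the limit \eqref{as_limits_3}. The first step is to observe that the construction \eqref{def_spiketimes} builds in a hard lower bound on interspike intervals: by definition $\tau_{j+1}>\sigma_j>\tau_j+\delta_0$, so for every $T>0$ one has the deterministic bound
\[
N_T \;\le\; \lceil T/\delta_0\rceil \qquad Q^{(\vartheta,\tau,\sigma)}_x\text{-a.s.\ for every }x\in E.
\]
In particular, viewing $g(\omega):=N_1(\omega)$ as a functional on $C_1 = C([0,1],E)$, we have $g\in L^\infty(Q^{(\vartheta,\tau,\sigma)}_\mu)\subset L^1(Q^{(\vartheta,\tau,\sigma)}_\mu)$.

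The second step is to write, for every integer $n\ge 1$,
\[
N_n \;=\; \sum_{k=1}^n \bigl(N_k-N_{k-1}\bigr) \;=\; \sum_{k=1}^n g\bigl(\mathbb{X}^{(\vartheta,\tau,\sigma)}_{[k-1,k]}\bigr),
\]
where the second equality uses time-homogeneity and the fact that the spike-count increment on $[k-1,k]$ is a measurable function of the path segment $\mathbb{X}^{(\vartheta,\tau,\sigma)}_{[k-1,k]}$. Applying \eqref{as_limits_3} with $T=1$ to this $g$ yields
\[
\frac{1}{n}N_n \;\longrightarrow\; E^{(\vartheta,\tau,\sigma)}_\mu(N_1) \qquad \text{almost surely as } n\to\infty.
\]

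The third step passes from integers to continuous time by a standard monotonicity sandwich. Since $t\mapsto N_t$ is non-decreasing, for every $t\ge 1$
\[
\frac{N_{\lfloor t\rfloor}}{\lfloor t\rfloor+1} \;\le\; \frac{N_t}{t} \;\le\; \frac{N_{\lfloor t\rfloor+1}}{\lfloor t\rfloor},
\]
and both the left- and right-hand sides converge almost surely to $E^{(\vartheta,\tau,\sigma)}_\mu(N_1)$ by the previous step, giving the assertion.

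I do not expect any serious obstacle here: the only delicate point is the integrability of $N_1$ under $Q^{(\vartheta,\tau,\sigma)}_\mu$, which is handed to us for free by the refractory spacing $\delta_0$ hard-wired into \eqref{def_spiketimes}. The substantive ergodic content is entirely contained in Theorem~\ref{theo:21}~\ref{item:21c}); the proposition is a corollary obtained by choosing the right path-segment functional and sandwiching between consecutive integer times.
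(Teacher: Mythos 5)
Your proof is correct and is essentially identical to the paper's: fix $T=1$, view $N_k-N_{k-1}$ as a bounded path-segment functional (bounded because interspike times are $\ge\delta_0$ by \eqref{def_spiketimes}), apply Theorem~\ref{theo:21}~\ref{item:21c}) together with \eqref{as_limits_3}, and interpolate between $\lfloor t\rfloor$ and $\lfloor t\rfloor+1$. Your sandwich step is just a slightly more explicit spelling-out of the paper's final remark.
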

\begin{proof} 
	Fix  $T:=1$. View $N_{kT} -N_{(k-1)T}$ as a functional 
	$h\left( \mathbb{X}_{[(k-1)T,kT]} \right)$ of the paths segments; by construction in 
	\eqref{def_spiketimes},   $N_{kT} -N_{(k-1)T}$ being bounded by $\frac{1}{T} \delta_0$, 
	this  
	functional $h : C_T \to [0,\infty)$ is bounded. Theorem~\ref{theo:21}~\ref{item:21c}) and 
	\eqref{as_limits_3} give 
	almost sure convergence 
	\[
	\frac{1}{n} N_n = \frac{1}{n} \sum_{k=1}^n  h\left( \mathbb{X}_{[k-1,k]}  \right)
	\longrightarrow 
	E_\mu^{(\vartheta,\tau,\sigma)}\left( N_1 \right) 
	\]
	under $(\vartheta,\tau,\sigma)$, and with $\lfloor t \rfloor \le t \le \lfloor  t \rfloor + 1 $ the 
	assertion follows. 
\end{proof}

The following extension of Proposition~\ref{prop:22}~\ref{item:22b}) allows to consider 
spiking 
patterns. 

\begin{theorem}\label{theo:25}
For every $L\in\mathbbm{N}$, empirical distribution functions 
$\widehat{G}_m : [0,\infty)^L \to [0,1]$ associated to the first $m$ observed $L$-tuples of 
successive interspike times 
\begin{equation}\label{Ltupels_ISI}
	\left(\tau_{n+1} - \tau_{n},  \ldots,  \tau_{n+L} - \tau_{n+L-1}\right) ,\quad 
	n\in \mathbbm{N}
\end{equation}
converge almost surely as $m\to\infty$, uniformly on $[0,\infty)^L$,  to the distribution 
function $G^{(\vartheta,\tau,\sigma)}_\mu$ of some probability law concentrated on 
$(0,\infty)^L$. 
\end{theorem}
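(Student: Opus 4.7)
The strategy is to reduce the claim to a multivariate strong law of large numbers for the sequence $Y_n := (\tau_{n+1}-\tau_n, \ldots, \tau_{n+L}-\tau_{n+L-1})$, using the Nummelin splitting machinery underlying the earlier theorems. Fix $T>0$, for instance $T=1$, and apply the minorization condition of Theorem~\ref{theo:21}\ref{item:21d}) to the path segment chain of Theorem~\ref{theo:21}\ref{item:21c}). This yields a split chain with regeneration indices $0 = K_0 < K_1 < K_2 < \cdots$, cutting the continuous-time trajectory into life cycles $\mathcal{C}_i := (\mathbb{X}_t)_{t\in[K_{i-1}T,\, K_iT)}$ which are iid in distribution in the sense of \cite{Num-78,HL-03,HLT-17}. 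Indexing by spike number rather than by wall-clock time means that \eqref{as_limits_3} cannot be applied directly to $Y_n$, which is exactly the reason for passing through regenerations.

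\textbf{SLLN on cycles and Glivenko-Cantelli upgrade.} For each $t=(t_1,\ldots,t_L)\in[0,\infty)^L$, let $M_i$ denote the number of spike indices $n$ for which the whole $L$-tuple $Y_n$ is realised inside cycle $\mathcal{C}_i$, and let $Z_i(t)$ denote the number of such indices for which additionally $Y_n \le t$ holds coordinatewise. By the regeneration property $\{(Z_i(t), M_i)\}_{i\ge 1}$ is an iid sequence, while the $L$-tuples that straddle two successive cycles number at most $L$ per cycle and thus form a negligible correction after dividing by $m\to\infty$. Kolmogorov's SLLN applied to numerator and denominator yields
\[
\widehat{G}_m(t) \longrightarrow G^{(\vartheta,\tau,\sigma)}_\mu(t) := \frac{E(Z_1(t))}{E(M_1)}
\quad\mbox{almost surely as } m\to\infty.
\]
The limit is a probability distribution function concentrated on $(0,\infty)^L$, since $\tau_{j+1}>\tau_j$ holds almost surely by construction \eqref{def_spiketimes}. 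Pointwise a.s.\ convergence at every $t$ is then upgraded to uniform a.s.\ convergence on $[0,\infty)^L$ by the standard multivariate Glivenko-Cantelli argument: approximate by a finite $\varepsilon$-grid including $+\infty$ in each coordinate, exploit coordinatewise monotonicity of $\widehat{G}_m$ and of the limit, and conclude with a finite union bound.

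\textbf{Main obstacle.} The key technical point I expect to be the main difficulty is the finiteness of $E(M_1)$, the expected number of spikes inside one regeneration cycle, which must be established to make the renewal-reward ratio above meaningful. The expected cycle length is finite by positive Harris recurrence of the path segment chain, while by construction \eqref{def_spiketimes} the spike counting process satisfies the deterministic bound $N_s \le s/\delta_0$, so the number of spikes inside a cycle is dominated by a constant multiple of the cycle length; integrability of $M_1$ follows, and integrability of $Z_1(t)$ comes for free from $Z_1(t)\le M_1$. With these ingredients in place the proof then follows the same general scheme as the one of Proposition~\ref{prop:22}\ref{item:22b}) (\textit{i.e.} Theorem~2.9 in \cite{HLT-16}), with the univariate empirical distribution function of interspike times replaced throughout by its $L$-variate analog.
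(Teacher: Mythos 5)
Your reduction to a renewal--reward type ratio over life cycles is the right general strategy, and it is indeed the route the paper takes (via Nummelin splitting as in Lemma~\ref{lem:61}). However, there is a genuine gap at the step where you discard the straddling $L$-tuples.

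You define $M_i$ and $Z_i(t)$ by counting only those $n$ for which the entire block of spikes $\tau_n,\ldots,\tau_{n+L}$ falls inside cycle $\mathcal{C}_i$, and then argue that the $L$-tuples straddling cycle boundaries --- at most $L$ per cycle --- ``form a negligible correction after dividing by $m$.'' This is not so. If $J=J(m)$ is the number of regeneration cycles exhausted by the first $m$ spikes, then $m/J \to E_\mu(\widetilde{Z}_1)$ almost surely, where $\widetilde{Z}_1$ is the number of spikes per cycle (finite by positive Harris recurrence, as you correctly observe). Consequently the straddling contribution, which is $O(LJ)$, satisfies $O(LJ)/m \to O\bigl(L/E_\mu(\widetilde{Z}_1)\bigr)$ as $m\to\infty$: a strictly positive constant, not zero. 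The ratio $\sum_i Z_i(t)\big/\sum_i M_i$ therefore converges to $E(Z_1(t))/E(M_1)$ as you state, but this is \emph{not} shown to equal $\lim \widehat{G}_m(t)$, because the straddling tuples are a positive asymptotic fraction of all $L$-tuples. (Note also that $E(M_1)=E\bigl[(\widetilde{Z}_1-L)^+\bigr]$ is strictly smaller than $E_\mu(\widetilde{Z}_1)$, so the denominator is genuinely different from the one in the correct limit.) In a regime where cycles are short relative to $L$, your $M_i$ would even vanish with high probability while the straddling tuples carry essentially all the mass.

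The paper resolves exactly this difficulty with a different attribution rule: in \eqref{def:Zj_v_h}, $Z_j(v,h)$ counts the $L$-tuples whose \emph{first} spike $\tau_n$ lies in cycle $j$, no matter how far $\tau_{n+1},\ldots,\tau_{n+L}$ extend into subsequent cycles. Every index $n$ is then attributed to exactly one cycle and there is no discarded boundary mass. The price paid is that $Z_j(v,h)$ is no longer a functional of cycle $j$ alone and the $Z_j$ are not $2$-by-$2$ independent. The key observation (steps \ref{item:pointipage30})--\ref{item:pointiipage30}) of the proof of Lemma~\ref{lem:61}) is that the presence of the truncation indicator $\mathbbm{1}_{[0,v]}$ forces the relevant $L$-tuples to have total length at most $\sum_i v_i$, so $Z_j(v,h)$ is $\mathcal{F}_{R_{j+K}T}$-measurable for $K \ge 2 + \frac{1}{T}\sum_i v_i$, while it is independent of $\mathcal{F}_{R_j T}$; hence the family $(Z_j)_j$ is independent $K$-by-$K$, which still yields the strong law by splitting into $K$ residue classes. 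This is precisely where the paper's argument does work that your proposal skips, and Remark~\ref{rem:64}~\ref{item:rem64b}) stresses that the indicator $\mathbbm{1}_{[0,v]}$ cannot be omitted for exactly this reason. A secondary, more cosmetic imprecision is that the ``green-to-green'' segments used here are identical in law but only pairwise independent, not iid; this is harmless for the SLLN (split even and odd indices) but your appeal to ``iid cycles'' glosses over it.
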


The proof, based on renewal techniques which extend the proof of 
Proposition~\ref{prop:22}~\ref{item:22b}) above 
(\textit{i.e.} the proof of Theorem~2.9 in  \cite{HLT-16}), is given together with some 
complements in 
the appendix Section~\ref{sect:proof_prop_2.5}. The probability law  
$G^{(\vartheta,\tau,\sigma)}_\mu $ in Theorem~\ref{theo:25} governs the variety of typical 
patterns on 
$(0,\infty)^L$ which will appear in the long run in  $L$-tuples of successive interspike times.

\subsection{Output of a stochastic Hodgkin-Huxley neuron}\label{output}

The counting process $N$ in \eqref{def_N} allows to measure the accumulated activity of the 
neuron $\mathbb{X}^{(\vartheta,\tau,\sigma)}$ in \eqref{doublescript_X} by a stochastic 
process $U$ which we call 'output'
\begin{equation}\label{def_output_process}
	U=(U_t)_{t\ge 0} \quad,\quad   dU_t = -c_1 U_{t-} dt + dN_t \quad,\quad U_0= 0   
\end{equation}
where $c_1$ is some constant, strictly positive and finite. We have 
\[
U_t - U_s =  U_s e^{- c_1 (t-s) }  +  \int_{(s,t]}  e^{- c_1 (t-v) } dN_v    
 =   U_s e^{- c_1 (t-s) }  +  \sum_{j\ge 1} \mathbbm{1}_{(s,t]}(\tau_j) e^{ - c_1 (t-\tau_j) 
}    
\]
for $0\le s<t$, and in particular at the spike times 
\begin{equation}\label{output_properties}
	U_{\tau_\ell} = \sum_{j=1}^\ell e^{ - c_1 (\tau_\ell-\tau_j) } \quad,\quad 
	U_{\tau_j -} = U_{\tau_{j-1}} e^{ - c_1 (\tau_j-\tau_{j-1}) } \quad,\quad 
	U_{\tau_k} =  U_{\tau_k -} +  1    . 
\end{equation}

Properties of the output process in the long run can be discussed as an application  of 
Theorem~\ref{theo:25}.

\begin{proposition}\label{prop:26}
For $\varepsilon>0$ choose $L$ large enough so that  $\sum_{\ell>L} e^{ - 
c_1 \delta_0  \ell } < \varepsilon$. Then pairs 
\begin{equation}\label{Uafterspike_Ubeforespike_neu}
	\left(  U_{\tau_{n+L}} ,  U_{(\tau_{n+L+1})^-}\right) \quad,\quad n\in\mathbbm{N}
\end{equation}
admit approximations  
\begin{equation}\label{approx_afterspike_approx_beforespike} 
	\left(V_n,  V_{n+1}^- \right) \quad:\quad 
	V_n := \sum_{j=n}^{n+L} e^{-c_1(\tau_{n+L}-\tau_j) } \quad,\quad 
	V_{n+1}^- := \sum_{j=n}^{n+L} e^{- c_1 (\tau_{n+L+1}-\tau_j) }  
\end{equation}
with the following properties: we have bounds uniformly in $n$
\[
\sup \left\{ | U_{\tau_{n+L}} - V_n | , | U_{(\tau_{n+L+1})^-} - V_{n+1}^- | :   
n\in\mathbbm{N} \right\} < \varepsilon, 
\]
and empirical distribution functions associated to the first $m$ pairs out of 
\eqref{approx_afterspike_approx_beforespike} 
converge almost surely as $m\to\infty$, uniformly on  $[0,\infty)^2$, to the distribution 
function of some probability law which is concentrated on $(0,\infty)^2$.  
\end{proposition}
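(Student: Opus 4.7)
The plan is to split the argument into two parts: first, establish the deterministic bounds that justify the approximation, then reduce the convergence statement for empirical distribution functions to Theorem~\ref{theo:25} via a continuous change of variables.

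\textbf{Approximation bound.} Starting from \eqref{output_properties}, the first relation gives the telescoped expansion $U_{\tau_{n+L}} = \sum_{j=1}^{n+L} e^{-c_1(\tau_{n+L}-\tau_j)}$, and iterating the second and third relations yields $U_{(\tau_{n+L+1})^-} = \sum_{j=1}^{n+L} e^{-c_1(\tau_{n+L+1}-\tau_j)}$. Subtracting $V_n$ and $V_{n+1}^{-}$ respectively removes the terms $j \ge n$ and leaves tail sums over $j=1,\ldots,n-1$. By construction in \eqref{def_spiketimes}, $\tau_k - \tau_{k-1} \ge \delta_0$ for all $k$, so $\tau_{n+L}-\tau_j \ge (n+L-j)\delta_0$ and likewise for $\tau_{n+L+1}-\tau_j$; bounding the tail against a geometric series with ratio $e^{-c_1\delta_0}$ and using the choice of $L$ gives $|U_{\tau_{n+L}}-V_n| \le \sum_{\ell>L} e^{-c_1\delta_0\ell} < \varepsilon$ and analogously for the second coordinate, uniformly in $n$.

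\textbf{Continuous-functional representation.} Setting $\Delta_k := \tau_k-\tau_{k-1}$ and rewriting $\tau_{n+L}-\tau_j$ and $\tau_{n+L+1}-\tau_j$ as partial sums of successive $\Delta_k$'s, both $V_n$ and $V_{n+1}^{-}$ are explicit continuous functions of the $(L{+}1)$-tuple of interspike times $(\Delta_{n+1},\ldots,\Delta_{n+L+1})$. Call $\Psi:(0,\infty)^{L+1}\to(0,\infty)^2$ this continuous map; note $V_n\ge 1$ and $V_{n+1}^{-} \ge e^{-c_1 \Delta_{n+L+1}}>0$, so the image lies in $(0,\infty)^2$. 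Theorem~\ref{theo:25} applied with $L$ replaced by $L{+}1$ gives that the empirical distribution functions of the $(L{+}1)$-tuples in \eqref{Ltupels_ISI} converge almost surely, uniformly on $[0,\infty)^{L+1}$, to the distribution function of a probability law $G^{(\vartheta,\tau,\sigma)}_{\mu,L+1}$ concentrated on $(0,\infty)^{L+1}$.

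\textbf{Transfer to the empirical DF of the pairs.} For every fixed $(x,y) \in [0,\infty)^2$, the event $\{V_n \le x,\,V_{n+1}^{-}\le y\}$ is $\Psi^{-1}((-\infty,x]\times(-\infty,y])$, a measurable rectangle of the $(L{+}1)$-tuple. Hence pointwise almost sure convergence of the empirical DFs of $(V_n,V_{n+1}^{-})_{n\ge 1}$ to the distribution function of the pushforward measure $\Psi_* G^{(\vartheta,\tau,\sigma)}_{\mu,L+1}$ is a direct consequence of Theorem~\ref{theo:25} applied to indicators of such rectangles (or alternatively of the continuous mapping theorem combined with weak convergence). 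Upgrading pointwise to uniform convergence on $[0,\infty)^2$ is then carried out by the same Pólya-type argument invoked in the proof of Proposition~\ref{prop:23}, using monotonicity of DFs in each coordinate.

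\textbf{Main obstacle.} The step that requires genuine input is the continuity of the limit DF $\Psi_* G^{(\vartheta,\tau,\sigma)}_{\mu,L+1}$, without which Pólya's argument for uniform convergence breaks down. This amounts to showing that $\Psi_*G^{(\vartheta,\tau,\sigma)}_{\mu,L+1}$ has no atoms on coordinate hyperplanes of $[0,\infty)^2$, which is plausible from the non-degeneracy of the Ornstein--Uhlenbeck driving noise in \eqref{def_X_HH} (the interspike law is expected to be absolutely continuous, and $\Psi$ has non-vanishing differential on its domain). A rigorous justification of this regularity requires inspection of the renewal-type construction underlying the proof of Theorem~\ref{theo:25} in Section~\ref{sect:proof_prop_2.5}; this is the only delicate point, the remainder of the proof being a routine application of the already-established ergodic framework.
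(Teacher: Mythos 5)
Your proof follows the same route as the paper's: the geometric tail bound using $\tau_k-\tau_{k-1}\ge\delta_0$, the representation of $(V_n,V_{n+1}^-)$ as the image of the $(L{+}1)$-tuple of interspike times under a continuous map $F=(f_1,f_2)$, and the continuous mapping theorem applied to the weak convergence furnished by Theorem~\ref{theo:25}. The one small mismatch with the paper's proof is cosmetic: the paper writes out $f_1, f_2$ explicitly as continuous, bounded functions on $[0,\infty)^{L+1}$ and views the limit as the pushforward $H_\mu = G_\mu \circ F^{-1}$; you package the same thing as $\Psi_\ast G_\mu$.

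The concern you raise in your \textbf{Main obstacle} paragraph is genuine and well placed, and it is \emph{also} not resolved in the paper's own proof, which simply asserts that ``weak convergence in $[0,\infty)^2$ can be reformulated in terms of distribution functions.'' Weak convergence obtained via the continuous mapping theorem gives pointwise convergence of distribution functions only at continuity points of the limit, whereas the multivariate Glivenko--Cantelli argument used in Proposition~\ref{prop:23} and Theorem~\ref{theo:25} requires pointwise almost sure convergence at \emph{every} grid point. So as written, neither your argument nor the paper's fully justifies the asserted uniform convergence. However, the fix does not actually require continuity of $\Psi_\ast G_\mu$: one can obtain pointwise convergence of $\widehat{H}_m(v_1,v_2)$ for all $(v_1,v_2)$ directly from Lemma~\ref{lem:61} by a truncation argument. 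Apply the lemma with $h := \mathbbm{1}_{\Psi^{-1}([0,v_1]\times[0,v_2])}$ and cutoff $w\in[0,\infty)^{L+1}$, giving almost sure convergence of $\widehat{G}_m(w,h)$; since $\widehat{G}_m(w,h) \le \widehat{H}_m(v_1,v_2) \le \widehat{G}_m(w,h) + (1-\widehat{G}_m(w))$, letting $w\to\infty$ and using the convergence of $\widehat{G}_m(w)$ to $G_\mu(w)\uparrow 1$ sandwiches $\widehat{H}_m(v_1,v_2)$ between limits that coincide. This circumvents the restriction, emphasized in Remark~\ref{rem:64}\ref{item:rem64b}), that Lemma~\ref{lem:61} needs a bounded cutoff $\mathbbm{1}_{[0,w]}$, and it supplies exactly the pointwise-at-all-points convergence the Glivenko--Cantelli argument requires, without any regularity hypothesis on the limit. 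Your alternative route via absolute continuity of the limit is plausible (and mirrors Conjecture~\ref{conjecture:A} in Section~\ref{conjectures}), but it would require proving a regularity property the paper explicitly leaves open.
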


The  proof of Proposition~\ref{prop:26} is also shifted to the appendix 
Section~\ref{sect:proof_prop_2.5}. 
Note that in order to obtain small values of $\varepsilon$ in Proposition~\ref{prop:26} we have 
to  require huge 
values of $L$, so the result seems more of theoretical than of practical interest. With the same 
technique of proof, the result can be extended to $(J+1)$-tuples 
\begin{equation}\label{Uafterspike_Ubeforespike_extended}
	\left(
	\left(U_{\tau_{n+j+L}},  U_{(\tau_{n+j+L+1})^-}\right)_{ 0 \le j \le J } 
	\right) ,\quad    n\in\mathbbm{N} 
\end{equation}
for $J\in\mathbbm{N}$ arbitrary but fixed.

\begin{figure}[h]
	\centering
	\includegraphics[width=0.95\linewidth]{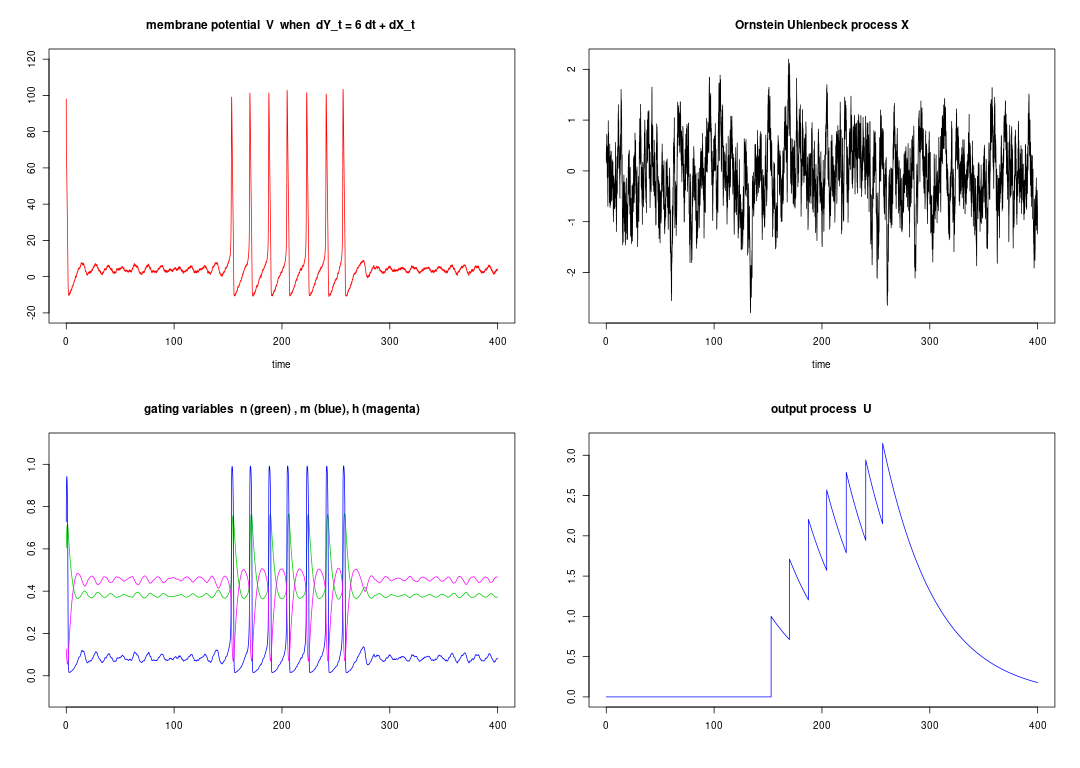} 
	\caption{Simulated trajectory of a stochastic Hodgkin-Huxley neuron 
	$\mathbb{X}^{(\vartheta,\tau,\sigma)}$. The signal is $\vartheta=6$. The parameters for the 
	Ornstein Uhlenbeck process $X$ are $\tau=0.7$ and $\sigma=0.83666$. The parameter in 
	the output process $U$ is $c_1 = 0.02$. Initial conditions are selected at random, according 
	to the stationary law of $X$ and  according to the uniform law on $(-12,120){\times}(0,1)^3$ 
	for $(V,n,m,h)$. The starting value for the output process $U$ is $0$. 
		The simulation was done using an Euler scheme with equidistant steps \(0.001\). 
	} 
	\label{fig:1}
\end{figure}

\section{Quiet behaviour of stochastic neurons}\label{sect:quiet}

In a deterministic Hodgkin-Huxley neuron, sufficiently small values of the signal $\vartheta$ 
--smaller than $\inf( \mathbbm{I}_{\mathrm{bs}} )$, see Section~\ref{sect:det_HH}-- grant 
that 
trajectories  are attracted to the stable equilibrium point (randomly chosen initial conditions). 
In a stochastic Hodgkin-Huxley neuron, by Proposition~\ref{prop:22}, spikes will occur almost 
surely also for small values of the signal $\vartheta$. Simulations under $\vartheta < \inf( 
\mathbbm{I}_{\mathrm{bs}} )$ show that the spiking behaviour --in form of single isolated 
spikes or 
small groups of spikes-- depends on some interplay between the volatility $\sigma$ and 
the back-driving force $\tau$.

In Definition~\ref{def:31} below, we shall define  quiet behaviour of stochastic 
Hodgkin-Huxley 
neurons as an event on which  spike trains observed over a long time interval seem close to a 
Poisson process with low intensity. 
The process $N$ counting spikes has been defined in~\eqref{def_spiketimes}.

To random variables $\xi_1,\ldots,\xi_n$,  $\mathbbm{N}_0$-valued but which in general we 
do not assume independent or identically distributed, we associate an empirical distribution 
function  $\widehat{F}_n(v)=\frac{1}{n} \sum_{j=1}^n \mathbbm{1}_{[0,v]}(\xi_j)$,  an 
empirical 
Laplace 
transform $\widehat{\psi}_n(v):=\frac{1}{n} \sum_{j=1}^n e^{-v \xi_j}$, $v\ge 0$, and an 
empirical mean  $\bar\xi_n := \frac{1}{n} \sum_{j=1}^n \xi_j$.

For Poisson random variables $\xi$ with parameter $\lambda > 0$, we write 
$\mathsf{F}_\lambda(v)=\mathsf{P}_\lambda(\xi\le v)$ for the distribution function~(DF), 
$v\ge 0$, 
${\phi}_\lambda(v)=\mathsf{E}_\lambda(e^{-v \xi})$ for the Laplace transform (LT), and 
\begin{equation}\label{poisson_quantiles}
	\bar{\mathsf{q}}(\alpha,\lambda):={\rm min}\{n\in\mathbbm{N}_0 : 
	\mathsf{P}_\lambda(\xi>n)\le\alpha \}
\end{equation}
for upper $\alpha$-quantiles. Write $\mathsf{P}_\lambda^n$ for the joint law of 
i.i.d.\ Poisson random variables $(\xi_1,\ldots,\xi_n)$  with parameter $\lambda>0$, 
$\widehat{\mathsf F}_n$ for the empirical distribution function, $\widehat{\phi_n}$ for the 
empirical distribution function, and $\bar\xi_n$ for the empirical mean. In order to obtain 
quantified benchmarks for comparison with other data sets we shall use laws  
\begin{equation}\label{poisson_stat_DF_LT}
	\mathcal{L}\left( \int_I \left| \widehat{\mathsf{F}_n}(v) - \mathsf{F}_{\bar\xi_n}(v) \right| dv 
	\mid
	{\mathsf{P}}_\lambda^n \right) \quad,\quad
	\mathcal{L}\left( \int_I \left| \widehat{\phi_n}(v) - \phi_{\bar\xi_n}(v) \right| dv \mid 
	\mathsf{P}_\lambda^n \right)    
\end{equation}
where $I$ is some closed interval in $[0,\infty)$ whose left endpoint is  $0$, and their upper 
$\alpha$-quantiles 
\begin{equation}\label{poisson_stat_qu_DF_LT}
	\bar{\mathsf{q}}_{DF} ( \alpha; \lambda, n, I ) \quad,\quad \bar{\mathsf{q}}_{LT} ( \alpha; 
	\lambda, n, I ) . 
\end{equation}
We shall determine laws \eqref{poisson_stat_DF_LT} and quantiles 
\eqref{poisson_stat_qu_DF_LT} empirically using simulations.

On the basis of ergodicity established in Section~\ref{sect:stoch_HH}  and motivated in 
particular 
by 
\eqref{as_limits_3} and Theorem~\ref{theo:21}~\ref{item:21c}) applied to path segments of 
sufficient 
length $T_0$, the following definition counts spikes on successive segments   
\[
\mathbb{X} \mathbbm{1}_{ \rrbracket (k-1)T_0 , kT_0 \rrbracket }   \quad,\quad 1\le k\le K 
\]
where $K$ is assumed to be large. We call a stochastic Hodgkin-Huxley neuron quiet when 
\ref{item:1_221221}) 
and \ref{item:2_221221}) hold:  
\begin{enumerate}[i)]
	\item \label{item:1_221221} a Poisson-goodness of fit test does not reject a Poisson 
	hypothesis with estimated 
	parameter;
	\item \label{item:2_221221}  the estimated parameter is small enough. 
\end{enumerate}

\begin{definition}\label{def:31}
Assume that a stochastic Hodgkin-Huxley neuron \eqref{doublescript_X} 
with parameters $(\vartheta,\tau,\sigma)$ has been observed over a long time interval 
$[0,KT_0]$, $K\in\mathbbm{N}$. For $K$ and $T_0$ large enough, put  
\begin{equation}\label{K_Tnull_setting} 
	T_1 := K  T_0 \quad,\quad 
	\widetilde{\lambda} := \frac{1}{T_1} N_{T_1}  \quad , \qquad 
	\xi_k :=  N_{kT_0} - N_{(k-1)T_0}    \quad,\quad 1\le k\le K      
\end{equation}
and fix critical values  
\begin{equation}\label{crit_val_GoF}
	\lambda_c := 0.0005 \quad,\quad \alpha_c:=0.0005  
\end{equation}
for hypothetical Poisson intensities and  quantiles.   
Let $Q(T_0,K)$ denote the event in $\mathcal{G}_{T_1}$ on which either: spikes are 
extremely rare, \textit{i.e.}   
\begin{equation}\label{extremely_few}
	N_{T_1} \le 2 \quad\mbox{and}\quad \widetilde{\lambda} \le  0.0001    , 
\end{equation}
or: with quantiles \eqref{poisson_quantiles}, at most 
\begin{equation}\label{cond_**}
	N_{T_1} \le  \bar{\mathsf{q}} \left(0.05 , \lambda_c T_1 \right)   
\end{equation}
spikes occur, and their location on the time axis is such that  increments $\xi_1, \ldots,\xi_K$ 
in \eqref{K_Tnull_setting} are in good fit with i.i.d. Poisson random variables with estimated 
parameter $\widetilde{\lambda} T_0 $,  in the following sense: 
with $I:=[0,5.5]$ and with $\widehat{F}_K$ and $\widehat{\psi}_K$ defined from 
$\xi_1,\ldots,\xi_K$ we 
use the statistics  
\[
\Delta_{DF}(T_0,K)  :=   \int_I \left| \widehat{F_K}(v) - \mathsf{F}_{\widetilde{\lambda} T_0}(v) 
\right| dv 
\quad,\quad 
\Delta_{LT}(T_0,K)  :=   \int_I \left| \widehat{\psi_K}(v) - \phi_{\widetilde{\lambda} T_0}(v) 
\right| dv   
\]
($\mathcal{G}_{T_1}$-measurable) and require, with critical values \eqref{crit_val_GoF} and 
quantiles \eqref{poisson_stat_qu_DF_LT},  that the following holds: 
\begin{equation}\label{cond_+}
	\Delta_{DF}(T_0,K)  \le c_{DF} 
	\quad\mbox{with}\quad    c_{DF} :=  \bar{\mathsf{q}}_{DF} \left( \alpha_c  ; \lambda_c 
	T_0 , 
	K 
	, I \right) ,
\end{equation}
\begin{equation}\label{cond_++}
	\Delta_{LT}(T_0,K)  \le c_{LT} 
	\quad\mbox{with}\quad   c_{LT} :=  \bar{\mathsf{q}}_{LT} \left( \alpha_c ; \lambda_c T_0 
	, K 
	, 
	I \right) .
\end{equation}
On events $Q(T_0,K)\in\mathcal{G}_{T_1}$ we call the stochastic neuron $\mathbb{X}$ in 
\eqref{doublescript_X} \textbf{quiet}. 
\end{definition}

In the setting of Section~\ref{sect:stoch_HH}, spike trains are never exactly Poisson 
(interspike 
times being  $>\delta_0$ by con\-struction, the number $N_t$ of spikes up to time $t$ is 
bounded by $\frac{1}{\delta_0}t$, for every $t\ge 0$). Under certain parameter configurations, 
spike trains can however be quite similar to what a Poisson process would show, in particular 
when very few spikes, all isolated ones, are observed over a long time interval (in contrast to 
this, see figures \ref{fig:1a} and \ref{fig:1}). In Definition~\ref{def:31}, the criterion 
\eqref{cond_**} corresponds to a nonrandomized Poisson test for the hypothesis 'unknown 
intensity is $\le \lambda_c$' versus  '$>\lambda_c$' which in a Poisson model would be 
uniformly most powerful for its level (\cite{Wi-85}, p.\ 210), and criteria \eqref{cond_+} and  
\eqref{cond_++} correspond to a Poisson goodness-of-fit test 
at the critical value $\lambda_c$: if $N$ were Poisson with intensity $\lambda_c$, the 
random 
variables $\Delta_{DF}(T_0,K)$ and $\Delta_{LT}(T_0,K)$ would exceed the critical values in 
\eqref{cond_+} and  \eqref{cond_++} in only $5$ out of $10^4$ cases, on average.

\begin{example}\label{exam:32}
We use  a simulation study to investigate the spiking behaviour of stochastic Hodgkin-Huxley 
neurons  \eqref{doublescript_X} with signal $\vartheta=4$, and  to illustrate the influence of 
the 
volatility $\sigma$ and the back-driving force $\tau$ in the Ornstein-Uhlenbeck process 
\eqref{def_X_HH} on the spiking behaviour.  
Recall from Section~\ref{sect:det_HH} that under random initial conditions, a deterministic 
neuron with signal $\vartheta=4$ would have its trajectories attracted to the stable equilibrium 
$\left( v^{\{\vartheta\}} ,  n^{\{\vartheta\}} , m^{\{\vartheta\}} , h^{\{\vartheta\}} \right)$ in 
\eqref{eq:equilibrium}, and trapped there.

Our simulations of trajectories for the stochastic neuron (Euler schemes with time step 
$0.001$) mimick stationary behaviour by omitting, after random initial conditions, a sufficiently 
long initial piece of trajectory (of length $1000$): its  terminal state will serve as starting point 
for a trajectory of total length $T_1=25000$, cut down into $K=100$ segments of length 
$T_0=250$, which we evaluate statistically.

Counting spikes on the $K=100$ path segments of length $T_0=250$ we define as in 
\eqref{K_Tnull_setting}
\[
T_1 := K  T_0 \quad,\quad 
\widetilde{\lambda} := \frac{1}{T_1} N_{T_1}  \quad , \qquad 
\xi_k :=  N_{kT_0} - N_{(k-1)T_0}    \quad,\quad 1\le k\le K    
\]
and use the criteria and critical values of Definition~\ref{def:31}. For every parameter 
configuration $(\vartheta\equiv 4,\tau,\sigma)$ which we consider, we do 10 simulation runs 
over total time $T_1=25000$. This is sufficient to obtain strong evidence -- see the tables in 
\ref{item:ex42_3}) below-- for the following: 
\begin{enumerate}[i)]
\item the probability $Q_\mu\left( Q(T_0,K) \right) =  Q^{(\vartheta= 
4,\tau,\sigma)}_\mu\left( 
Q(T_0,K) \right) $ that a simulation run of length $T_1$ will turn out to be quiet in the sense 
of Definition~\ref{def:31} is a function of  $\tau$ and $\sigma$; 
\item for fixed value of the volatility $\sigma$, sufficiently large values of the back-driving 
force 
$\tau$ (the meaning of 'large' depending on $\sigma$) make sure that the neuron will be quiet 
with probability close to $1$. 
\end{enumerate}
We explain this in more detail in the following steps \ref{item:ex42_1})--\ref{item:ex42_4}).  
\begin{enumerate}[1)]
\item\label{item:ex42_1} With $\lambda_c=0.0005$ from \eqref{crit_val_GoF}, the upper 
$5\%$-quantile 
\eqref{cond_**} of the Poisson law $\mathsf{P}_{\lambda_c T_1}$  equals  
\[
\bar{\mathsf{q}}\left({0.05},  \lambda_c T_1 \right)   = 19. 
\]
With $I=[0,5.5]$ and $\alpha_c=0.0005$ as in \eqref{crit_val_GoF}, we determine 
approximate quantiles for criteria \eqref{cond_+} and \eqref{cond_++} as follows. To 
approximate the law under $\mathsf{P}^K_{\lambda_c T_0}$ of the random variables  
\begin{equation}\label{cond_x}
	\int_I \left| \widehat{\mathsf{F}}_K(v) - \mathsf{F}_{\bar\xi_K}(v) \right| dv \quad,\quad 
	\int_I \left| \widehat{\phi_K}(v) - \phi_{\bar\xi_K}(v) \right| dv  
\end{equation}
we draw i.i.d.\ Poisson random variables $\xi_1,\ldots,\xi_K$ with parameter $\lambda_c 
T_0$ and calculate the integrals \eqref{cond_x} for these. After a large number of $4\cdot 
10^4$ 
replications, empirical distribution functions for the objects in \eqref{cond_x} are  sufficiently 
good to determine upper $\alpha_c$-quantiles approximately:  
\begin{equation}\label{the_approx_poisson_quantiles}
	\bar{\mathsf{q}}_{DF} ( \alpha_c ; \lambda_c T_0, K, I ) \approx 0.075    \quad,\quad  
	\bar{\mathsf{q}}_{LT} ( \alpha_c ; \lambda_c T_0, K, I ) \approx 0.15  . 
\end{equation}
Approximations \eqref{the_approx_poisson_quantiles} yield critical values   $c_{DF}$ and 
$c_{LT}$  for conditions \eqref{cond_+} and \eqref{cond_++}. 
\item\label{item:ex42_2} In every simulation run, with $\xi_1,\ldots,\xi_K$ and 
$\widetilde{\lambda}$ given by 
\eqref{K_Tnull_setting}, we have    
\[
\widetilde{\lambda} T_0 = \frac{ N_{T_1} }{ T_1 } T_0  = \frac{1}{K} N_{T_1} = \frac{1}{K}  
\sum_{j=1}^K \xi_j = \bar\xi_K   
\]
and calculate from  $\xi_1,\ldots,\xi_K$ the integrals 
\[
\Delta_{DF}(T_0,K)  :=   \int_I \left| \widehat{F_K}(v) - \mathsf{F}_{\widetilde{\lambda} T_0}(v) 
\right| dv 
\quad,\quad 
\Delta_{LT}(T_0,K)  :=   \int_I \left| \widehat{\psi_K}(v) - \phi_{\widetilde{\lambda} T_0}(v) 
\right| dv. 
\]
We check conditions \eqref{cond_+} and \eqref{cond_++} in combination with either 
\eqref{extremely_few} or \eqref{cond_**}. When the full set of conditions is satisfied, the 
simulation run is counted as quiet in the sense of Definition~\ref{def:31}. We repeat $10$ runs 
under every parameter configuration which we consider. 
\item\label{item:ex42_3} With signal $\vartheta=4$, we vary the volatility $\sigma$ and 
the back-driving force $\tau$. As 
a 
general feature, when $\sigma$ is fixed, spikes turn out to be rare  under 'large' values of  
$\tau$ whereas they are frequent under 'low' values of  $\tau$  (figure \ref{fig:1a} provides an 
illustration for the last case). In \ref{item:ex42_3_1})--\ref{item:ex42_3_3}) below, we report 
in more detail the outcome of the 
simulation study for selected values of $\sigma$ and $\tau$. 

\begin{enumerate}[i)]
\item\label{item:ex42_3_1} For signal $\vartheta=4$ and volatility $\sigma=2.5$, interesting 
$\tau$-values range 
between $2.0$ and $2.4$. Figure~\ref{fig:7} shows empirical distribution functions for the 
random variables $N_{T_1}$, $\Delta_{DF}(T_0,K)$ and $\Delta_{LT}(T_0,K)$ obtained from 
the $10$ simulation runs. Vertical dotted lines indicate the critical values in \eqref{cond_**}, 
\eqref{cond_+} and \eqref{cond_++} as specified in \ref{item:ex42_1}). 
The following percentages of runs turned out to be quiet in the sense of 
Definition~\ref{def:31}: 
\begin{center}
	\begin{tabular}{|l|l|l|l|l|}
		\hline
		$\tau=2.0$ & $\tau=2.1$ & $\tau=2.2$ &  $\tau=2.3$ & $\tau=2.4$ \\
		\hline
		0\% &  10\% & 60\% & 100\% & 100\% \\
		\hline
	\end{tabular}
\end{center}
Among runs which did not fulfill all requirements of Definition~\ref{def:31}, some  failed with 
respect to Poisson-goodness of fit \eqref{cond_+} or \eqref{cond_++}  while satisfying 
\eqref{cond_**}, and some runs failed to \eqref{cond_**} while satisfying  \eqref{cond_+} and 
\eqref{cond_++}. 
Figure \ref{fig:7} provides evidence that for $\vartheta$ and $\sigma$ fixed,  laws of 
$N_{T_1}$, $\Delta_{DF}(T_0,K)$ and $\Delta_{LT}(T_0,K)$ do depend on $\tau$. Figure 
\ref{fig:7}  suggests in addition that laws under $Q^{(\vartheta,\tau,\sigma)}_\mu$ of these 
three variables should be stochastically ordered in $\tau$, in the sense that larger values of 
the back-driving force tend (while reducing among all observed spikes the proportion of 
double 
or triple ones) to reduce the total number of spikes and to improve the quality of Poisson 
approximation. As an example, 
the 10 simulation runs (over total time $T_1=25000$) produced in average $37.4$ spikes 
under $\tau=2.0$, in contrast to $5.6$ in  average under $\tau=2.4$. 
\item\label{item:ex42_3_2} For signal $\vartheta=4$ and volatility $\sigma=1.5$, results of 
similar structure as 
described in \ref{item:ex42_3_1}) were observed, but now the interesting range of 
$\tau$-values is between 
$1.0$ and $1.4$. In the $10$ simulation runs, the following percentage turned out to be quiet 
in the sense of Definition~\ref{def:31}: 
\begin{center}
	\begin{tabular}{|l|l|l|l|l|l|}
		\hline
		$\tau=1.0$ & $\tau=1.1$ & $\tau=1.15$ & $\tau=1.2$ & $\tau=1.3$ & $\tau=1.4$ \\
		\hline
		0\% &   0\% &  30\% & 70\% & 80\% & 100\% \\
		\hline
	\end{tabular}
\end{center}
The empirical distribution functions in figure \ref{fig:8} illustrate the dependence of the laws of 
all three variables $N_{T_1}$, $\Delta_{DF}(T_0,K)$ and $\Delta_{LT}(T_0,K)$ on the 
back-driving force $\tau$, and provide a strong hint that laws under 
$Q^{(\vartheta,\tau,\sigma)}_\mu$ of the three variables should be stochastically ordered in 
the sense of decreasing values of $\tau$. Most striking example, the $10$ simulation runs 
(with $T_1=25000$) produced an average of $51.1$ spikes under $\tau=1.0$, in strong 
contrast to an average of only $2.5$ under $\tau=1.4$.

\item \label{item:ex42_3_3}For signal $\vartheta=4$ and volatility $\sigma=1.0$, we observe 
the same features as in \ref{item:ex42_3_1})
and \ref{item:ex42_3_2}). The interesting range of $\tau$-values is now between $0.5$ 
(frequent spikes) and 
$0.75$ (few spikes), and the following percentages of runs turned out to be quiet  in the sense 
of Definition~\ref{def:31}: 
\begin{center}
	\begin{tabular}{|l|l|l|l|l|l|}
		\hline
		$\tau=0.5$ & $\tau=0.55$ & $\tau=0.6$ & $\tau=0.65$ &  $\tau=0.7$ & $\tau=0.75$ \\
		\hline 
		0\% &   20\% &  50\% & 90\% & 100\% & 100\% \\
		\hline
	\end{tabular}
\end{center}
\end{enumerate}

\item\label{item:ex42_4} We sum up as follows: when the signal is $\vartheta=4$, 
the schemes in \ref{item:ex42_3_1})--\ref{item:ex42_3_3}) above prove the dependence of 
$Q^{(\vartheta,\tau,\sigma)}_\mu\left( 
Q(T_0,K) \right)$ on the volatility $\sigma>0$ and the back-driving force $\tau>0$. 
Stronger values of $\tau$ tend to reduce the total number of spikes and to improve the quality 
of Poisson approximation. 
For actual determination of probabilities of events $Q(T_0,K)\in\mathcal{G}_{T_1}$ in 
stationary regime, 
we would of course need much more than the 10 simulation runs (up to time  $T_1=25000$, 
under every parameter configuration) which we have done. So the above tables above can 
give 
only poor approximations so far. 
\end{enumerate}
\end{example}

\begin{figure}[h]
	\centering
	\includegraphics[width=0.95\linewidth]{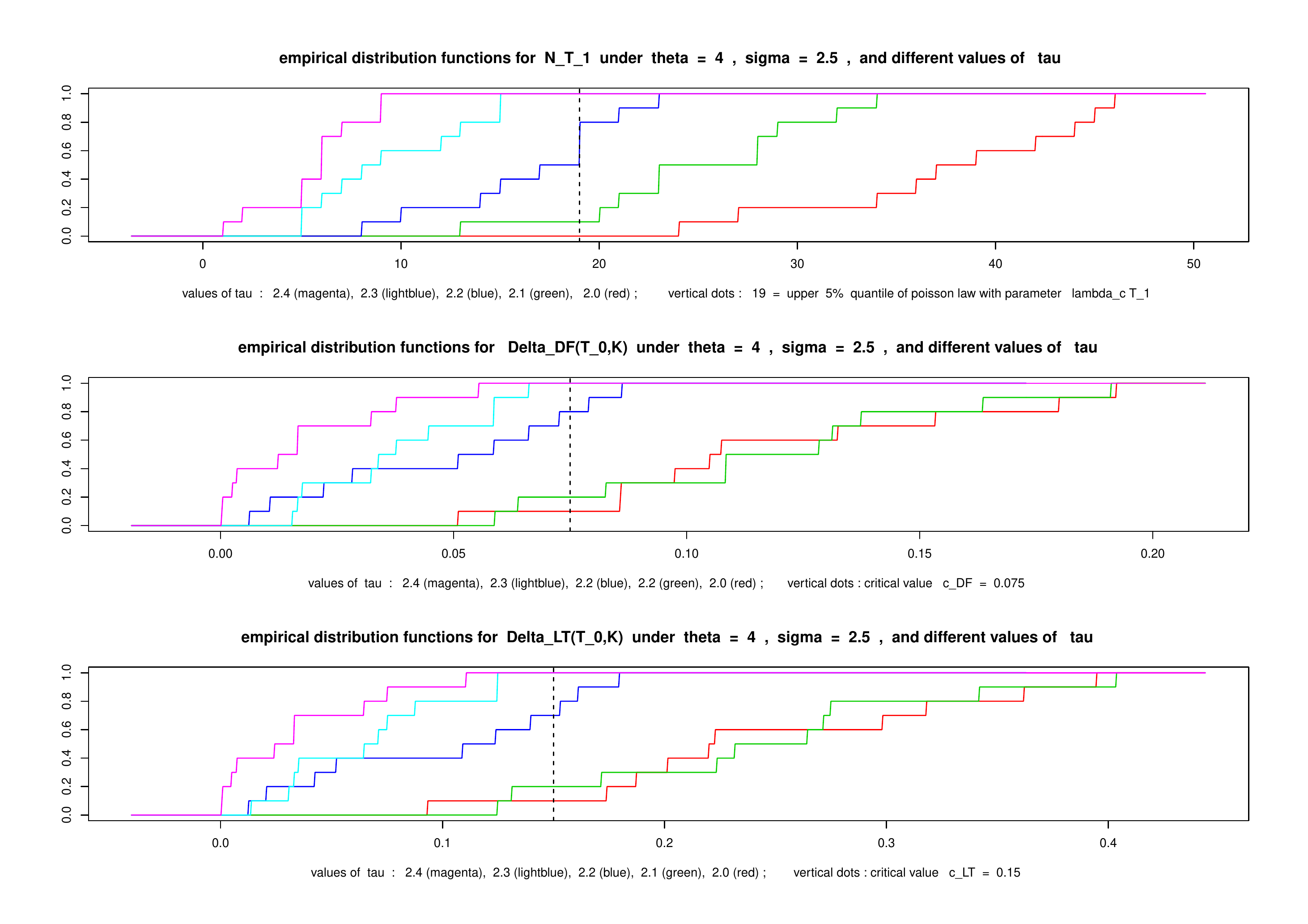} 
	\caption{
		In the stochastic Hodgkin-Huxley model with signal $\vartheta=4$, volatility 
		$\sigma=2.5$, and values of the back-driving force $\tau$ varying between  $2.0$ and 
		$2.4$, we show empirical distribution functions for the laws of the random variables 
		$N_{T_1}$, $\Delta_{DF}(T_0,K)$, $\Delta_{LT}(T_0,K)$ under 
		$(\vartheta,\sigma,\tau)$.  
		These are based on the values which have been observed in the $10$ simulation runs 
		described in \ref{item:ex42_3_1}) of example~\ref{exam:32}, on a time interval of length 
		$T_1=25000$ 
		divided into 
		$K=100$ segments of length $T_0=250$.  
		The graphics suggest stochastic ordering of the laws under 
		$Q^{(\vartheta,\tau,\sigma)}_\mu$ for all three variables, most clearly visible in case of 
		the total number $N_{T_1}$ of spikes, in the sense of decreasing values of $\tau$.  
	}
	\label{fig:7}
\end{figure}

\begin{figure}[h]
	\centering
	\includegraphics[width=0.95\linewidth]{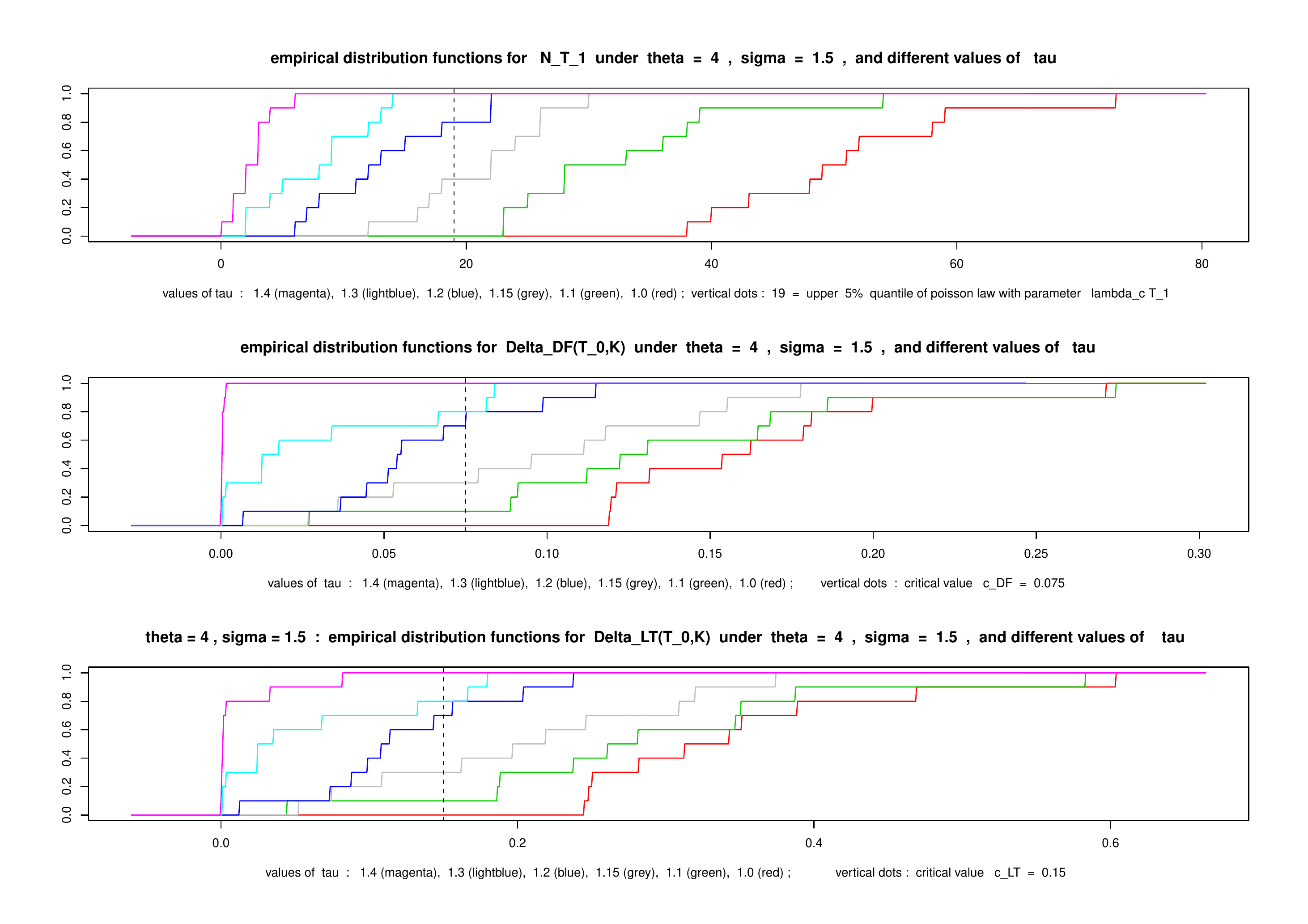} 
	\caption{
		In the stochastic Hodgkin-Huxley model with signal $\vartheta=4$, volatility 
		$\sigma=1.5$, and values of the back-driving force $\tau$ varying between  $1.0$ and 
		$1.4$, we show empirical distribution functions  for the laws of the random variables 
		$N_{T_1}$, $\Delta_{DF}(T_0,K)$, $\Delta_{LT}(T_0,K)$ under 
		$(\vartheta,\tau,\sigma)$. 
		They are based  on the values which have been observed in the $10$ simulation runs 
		described in \ref{item:ex42_3_2}) of Example~\ref{exam:32}, on a time interval of length 
		$T_1=25000$ 
		divided into 
		$K=100$ segments of length $T_0=250$.  
		The graphics suggest strongly that laws under $Q^{(\vartheta,\tau,\sigma)}_\mu$ for 
		all three variables should be  stochastically ordered in the sense of decreasing values of 
		$\tau$. 
	} 
	\label{fig:8}
\end{figure}

\section{Regular spiking of stochastic neurons } \label{sect:regular_spiking}

In deterministic Hodgkin-Huxley neurons, regular spiking --in the sense that trajectories are 
attracted towards a stable orbit-- depends only on the value of the signal $\vartheta$: with 
notations of Section~\ref{sect:det_HH},   this is the case $\vartheta > \sup( \mathbbm{I}_{\rm 
	bs})$  where we exclude, as in Section~\ref{sect:det_HH}, unrealistically large values of the 
signal.  

For a stochastic Hodgkin-Huxley neuron, given Proposition~\ref{prop:22} or 
Theorem~\ref{theo:25}, we 
shall define regular spiking as an event where up to some  sufficiently large time $T_1$, the 
pattern of observed spike times is sufficiently close to a regularly spaced grid whose step size 
is  the median 
\begin{equation}\label{median_regular_ISI}
	\Delta(T_1) := {\rm median}\left( \tau_2 - \tau_1, \ldots , \tau_{N_{T_1}} - \tau_{ 
		N_{T_1}-1 }   \right) 
\end{equation}
of the interspike times. Simulations indicate that the probability of such events in 
$\mathcal{G}_{T_1}$  in stationary regime depends on the triplet of parameters 
$(\vartheta,\tau,\sigma)$: for signal $\vartheta > \sup( \mathbbm{I}_{\mathrm{bs}})$, we 
can 
expect regular spiking in the sense of Definition~\ref{def:41} with probability close to $1$ 
whenever 
back-driving force $\tau$ in the Ornstein-Uhlenbeck process \eqref{def_X_HH} is --in relation 
to the value $\sigma$ of the volatility-- large enough.

Fix $T_1<\infty$ and assume that a sufficiently large number of interspike times  
\begin{equation}\label{regular_ISI}
	\tau_2 - \tau_1, \ldots , \tau_{N_{T_1}} - \tau_{ N_{T_1}-1 }    
\end{equation}
has been observed up to time $T_1$. Write $\widehat{H}_{T_1}$ for the empirical distribution 
function of the data set \eqref{regular_ISI}. Write 
\[
\Delta(T_1) = \inf\{ v>0 : \widehat{H}_{T_1}(v)\ge 0.5 \} 
\]
for the median and
\[
\mathtt{d} (\alpha,T_1) := \inf\{v>0 : \widehat{H}_{T_1}(v) \ge 1-\alpha \} - \inf\{ v>0 : 
\widehat{H}_{T_1}(v)\ge 
\alpha\}  
\]
for the distance between upper and lower $\alpha$-quantiles in the data set  
\eqref{regular_ISI}, $0<\alpha<\frac{1}{2}$; 
relating quantile distances to the median we shall consider ratios 
\begin{equation}\label{ratio_regular_ISI}
	\mathtt{r}(\alpha,T_1) := \frac{\mathtt{d}(\alpha,T_1)}{\Delta(T_1)}  . 
\end{equation}

The next Definition builds on weak convergence of empirical distributions for the  interspike 
times, as time goes to infinity, in application of Proposition~\ref{prop:22}: in the long run 
under $(\vartheta,\tau,\sigma)$, 'typical' interspike times are distributed according to   
$H^{(\vartheta,\tau,\sigma)}$.    
If we have no grasp on the limiting  object  itself, presence of noise in the system  
\eqref{stoch_HH} --as illustrated by figures such as  \ref{fig:1a}, \ref{fig:1} or \ref{fig:3} or by 
detailed representations of the system evolving on 'orbits' -- strongly suggests that 
$H^{(\vartheta,\tau,\sigma)}$ in
Proposition~\ref{prop:22}~\ref{item:22b}) has to be strictly increasing and continuous on 
some interval of 
support. In fact, flats in the limit distribution function seem impossible under noise --this 
would imply existence of pairs $0<x_1<x_2<\infty$ with $0 < H^{(\vartheta,\tau,\sigma)}(x_1) 
= H^{(\vartheta,\tau,\sigma)}(x_2) < 1$ and thus non-existence of interspike times of length 
between $x_1$ and $x_2$ in the long run under $(\vartheta,\tau,\sigma)$-- as well as point 
masses. But then,  arbitrary quantiles of the empirical distribution functions  (\cite{WMF-95} 
p.\ 71, without exceptional set) should converge to those of the limit distribution function, as a 
consequence of  Proposition~\ref{prop:22}~\ref{item:22b}) . See also 
Proposition~\ref{prop:73} in the 
Appendix 
Section~\ref{conjectures}.

\begin{definition}\label{def:41}
Consider  a stochastic Hodgkin-Huxley neuron \eqref{doublescript_X} 
under $(\vartheta,\tau,\sigma)$. With notations   
\eqref{median_regular_ISI}--\eqref{ratio_regular_ISI} and for $T_1$ large enough,  define 
$R(T_1)$ as the event in $\mathcal{G}_{T_1}$ on which 
\begin{equation}\label{zusatzcond_4.1} 
	\left| \frac{ N_{T_1} \Delta(T_1) }{T_1} - 1 \right| \le 0.05    \quad\mbox{ and }\quad 
	N_{T_1}>20 
\end{equation}
holds together with  
\begin{equation}\label{cond_4.1}
	\mathtt{r}(0.05, T_1) \le 0.3,\quad 
	\mathtt{r}(0.1, T_1)  \le 0.2 ,\quad
	\mathtt{r}(0.25, T_1)   \le 0.1.  
\end{equation}
On the event $R(T_1)\in\mathcal{G}_{T_1}$ we call the stochastic neuron $\mathbb{X}$ 
\textbf{regularly spiking}. 
\end{definition}

Condition \eqref{cond_4.1} requires that quantiles in the data set \eqref{regular_ISI} of 
interspike times are close to the median, but does not rule out (as illustrated by 
Figure~\ref{fig:1}) that up to time $T_1$, few long spikeless periods alternate with long groups 
of 
regularly spaced spikes. 
This is why  \eqref{zusatzcond_4.1} requires  regular spacing on at least $95\%$ of the time 
interval on which the membrane potential is observed.

\begin{example}\label{ex:42}
	As in classical statistics of i.i.d.\ observations, 
	we can approximate $Q^{(\vartheta,\tau,\sigma)}_\mu\left( R(T_1) \right)$ from 
	independent replications of $\mathbb{X}^{(\vartheta,\sigma,\tau)}$ in stationary regime 
	over time intervals of length $T_1$. We consider signal $\vartheta=10$ (for which the 
	deterministic process would evolve along a stable orbit, see section \ref{sect:det_HH}) 
	together with different values of $\tau$ and $\sigma$. 
	
	We use Euler schemes of step size $0.001$. In order to mimick stationary regime, in every 
	simulation run, we cast away an initial piece of trajectory (of length $100$) under randomly 
	chosen initial conditions (uniformly on  $(-12,120){\times}(0,1)^3$ for $(V,n,m,h)$ and 
	according to the stationary law for $X$; the output $U$ starts at $0$), conserve the final 
	state of this initial piece of trajectory as starting point for the simulation of interest which 
	then covers a time interval of length $T_1=500$. This second piece of trajectory is used for 
	inference.     
	
	Thus, for $\vartheta=10$ and $T_1=500$, we simulate  $20$ runs under every parameter 
	configuration. The scheme below gives the proportion of runs where regular spiking in the 
	sense of Definition~\ref{def:41} was observed.  
	\begin{center}
		\begin{tabular}{|l|l|l|l|l|l|}
			\hline
			~  &  $\tau=0.1$  &  $\tau=0.5$  &  $\tau=1$  &  $\tau=2.5$  &  $\tau=5$  \\
			\hline
			$\sigma=1$ & 100\% & 100\%  & 100\%  & $^*$100\%   & $^*$100\%   \\
			\hline
			$\sigma=1.5$ & 55\% & 90\%  & 100\%  & 100\%  & $^*$100\%  \\
			\hline
			$\sigma=2.5$ & 15\% & 45\%  & 75\% & 100\% & 100\%  \\
			\hline
			$\sigma=5$ & 0\% & 5\%  & 20\% & 95\% & 100\%  \\
			\hline
		\end{tabular} 
	\end{center}
	We deduce that in stationary regime, the event $R(T_1)\in\mathcal{G}(T_1)$ has probability 
	close to $1$ for well-chosen pairs $(\tau,\sigma)$: either, depending on $\sigma$, 
	the back-driving force $\tau$ has to be strong enough, or, depending on $\tau$, the 
	volatility 
	$\sigma$ has to be small enough. 
	Asterisk $^*$ distinguishes parameter configurations under which observed ratios 
	$\mathtt{r}(0.05,T_1)$ in \eqref{cond_4.1} turned out --on average over the $20$ 
	simulation 
	runs-- 
	to be strictly smaller than $0.05$, whereas the median  $\Delta(T_1)$ was located between 
	$14.3$ and $14.4$. In this sense, an upper right triangle of parameter values shows up in 
	the scheme    
	where quantiles of the data set \eqref{regular_ISI} concentrate sharply at the median, and 
	empirical distribution functions for the observed values of  
	\begin{equation}\label{triplet_ratios}
		\mathtt{r}( 0.05 , T_1 ) \quad,\quad   \mathtt{r}( 0.1 , T_1 )  \quad,\quad 
		\mathtt{r}( 0.25 , 
		T_1 ) 
	\end{equation}
	in the $20$ simulation runs under $\vartheta=10$ (not shown) look very much as a Dirac 
	mass at $\Delta(T_1)$ under 'some small random perturbation'. Figure \ref{fig:9} below 
	shows empirical distribution functions for observed values of ratios   \eqref{triplet_ratios}  in 
	case of rather high volatility  $\sigma=2.5$ for all values $\tau \in \{ 0.1 , 0.5 , 1.0 , 2.5 , 
	5.0 \}$ in the scheme under $\vartheta=10$. 
	The graphics  indicate  that   in stationary regime, laws of variables \eqref{triplet_ratios} 
	seem to be stochastically ordered in the  back-driving force $\tau$, in the sense that 
	increasing values of $\tau$ tend to push  quantiles $\mathtt{q}(\alpha,T_1)$ closer to the 
	median 
	$\Delta(T_1)$.
\end{example}
By \eqref{def_output_process}--\eqref{output_properties}, the output $U$ of a stochastic 
Hodgkin-Huxley neuron \eqref{doublescript_X} fluctuates between 'typical values' of 
$U_{\tau_\ell}$ (local maxima) and   $U_{(\tau_{\ell+1})^-}=U_{\tau_\ell} e^{ -c_1 
	(\tau_{\ell+1} - \tau_\ell) }$  (local minima) as $\ell\to\infty$. In general, typical values refers 
	to 
the approximations and limit distributions of Proposition~\ref{prop:26}. If however the neuron 
is regularly spiking, 'typical values' takes a much sharper sense:   simulations suggest that  on 
events $R(T_1)$ as defined in \ref{def:41}, for  $T_1$ large enough, functions of 
$\Delta(T_1)$ 
\begin{equation}\label{benchmarks_T1}
	\sum_{j\ge 0} e^{ - c_1 \Delta(T_1) j } = \frac{ 1 }{ 1 -  e^{ - c_1 \Delta(T_1) } } 
	\quad,\quad 
	\sum_{j\ge 0} e^{ - c_1 \Delta(T_1) (j+1) } = \frac{ e^{ - c_1 \Delta(T_1) } }{ 1 -  e^{ - 
	c_1
			\Delta(T_1) } } 
\end{equation}
provide benchmarks which allow to predict where pairs  $U_{\tau_\ell}$, 
$U_{(\tau_{\ell+1})^-}$ tend to cluster  in the long run, hence predict an interval on which the 
output process tends to concentrate a predominant part of future occupation time.  Figure 
\ref{fig:3} provides an illustration.

We shall discuss in an appendix section \ref{conjectures} in which sense 
\eqref{benchmarks_T1} is expected to provide good approximations to future values of the 
output process, on events $R(T_1)$ when $T_1$ is large.

In numerous simulations with large values of signal $\vartheta$ and large observation time 
$T_1$, whenever the stochastic neuron turned out to be regularly spiking in the sense of 
Definition~\ref{def:41},  benchmarks \eqref{benchmarks_T1} predicted well the range of 
oscillations of 
the output process once time was large enough.

\begin{figure}[h]
	\centering
	\includegraphics[width=0.95\linewidth]{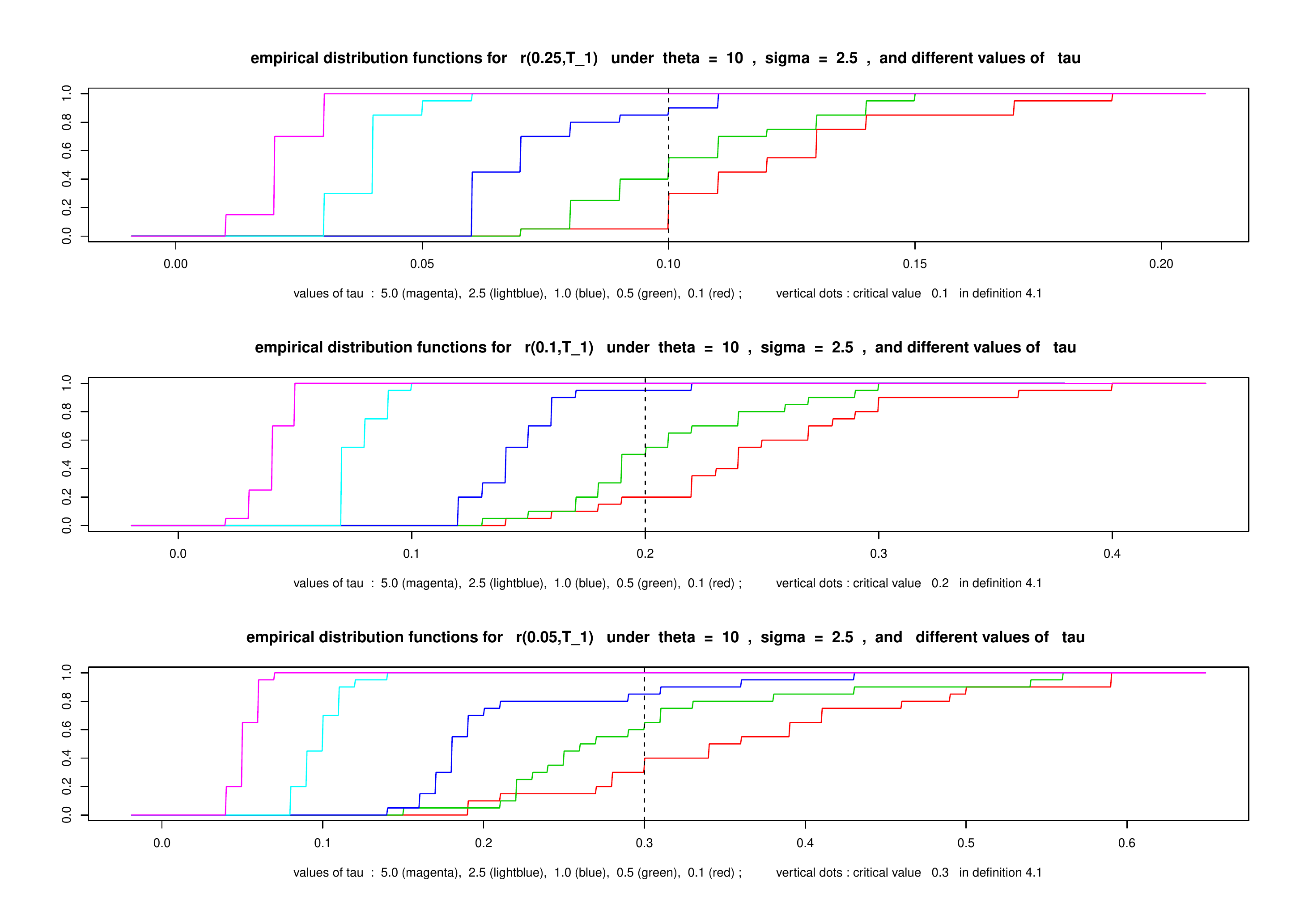} 
	\caption{
		In the stochastic Hodgkin-Huxley model with signal $\vartheta=10$, volatility 
		$\sigma=2.5$, and values of the back-driving force $\tau$ varying between $0.1$ and 
		$5.0$, we show empirical distribution functions for the laws of the random variables 
		$\mathtt{r}(0.25,T_1)$,  $\mathtt{r}(0.1,T_1)$,  $\mathtt{r}(0.05,T_1)$ under 
		$(\vartheta,\sigma,\tau)$,  in stationary regime and with $T_1=500$. 
		The empirical distribution functions are based on the values which have been observed in 
		the 
		$20$ simulation runs  described in Example~\ref{ex:42}. 
		The graphics suggest that laws of all three random variables under 
		$Q^{(\vartheta,\tau,\sigma)}_\mu$ should be stochastically ordered in $\tau$, in the 
		sense that increasing values of $\tau$ improve remarkably the concentration of interspike 
		times around their median. 
		The median $\Delta(T_1)$ itself (as well as the number of spikes in the observation 
		interval) does not change much with $\tau$: averaged over the 20 runs we obtained 
		$14.19$ for $\tau=0.1$, and  $14.33$ for $\tau=5.0$.
	}
	\label{fig:9}
\end{figure}

\begin{figure}[h]
	\centering
	\includegraphics[width=0.95\linewidth]{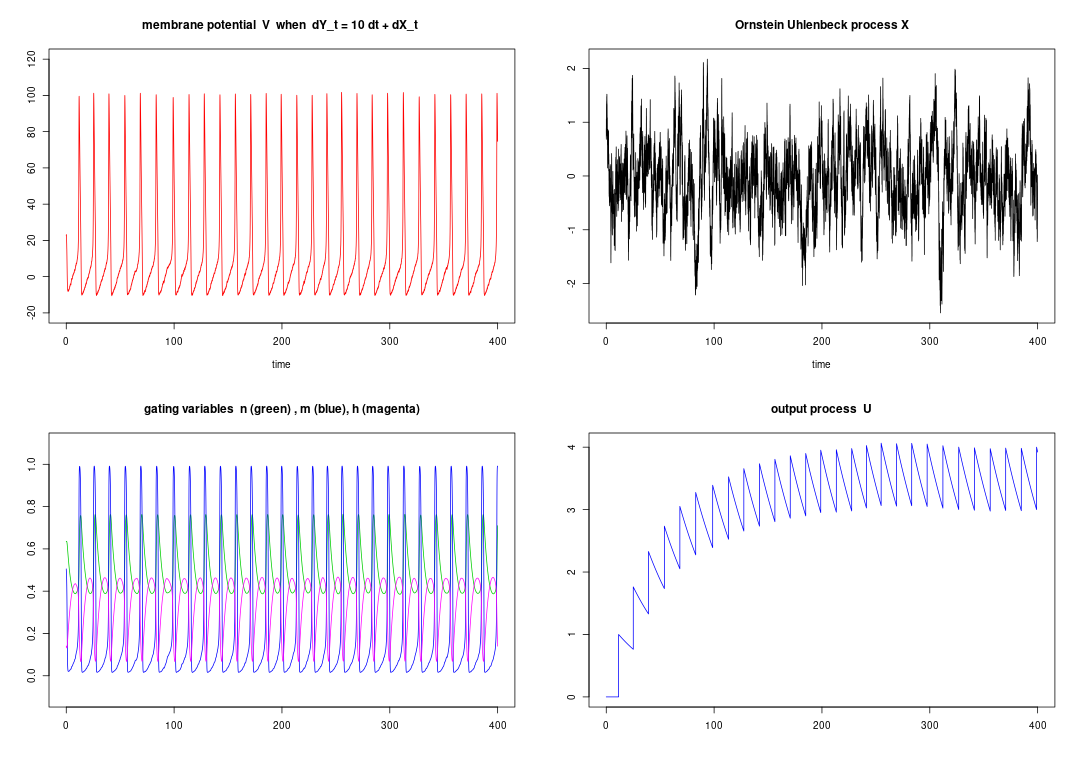} 
	\caption{Simulated trajectory of a stochastic Hodgkin-Huxley neuron 
	$\mathbb{X}^{(\vartheta,\tau,\sigma)}$. The signal is $\vartheta=10$. The parameters for 
	the OU process $X$ are $\tau=0.7$ and $\sigma=0.83666$. The decay parameter in the 
	output process $U$ is $c_1 = 0.02$. The simulation was done using an Euler scheme with 
	equidistant steps 0.001. We start with $U_0=0$ and random initial conditions for 
	$(V,n,m,h,X)$. 
		In this simulation, $27$ spikes occur up to time $T_1=400$. The median of the $26$ 
		interspike times is $\Delta(T_1)=14.4$,  
		lower resp.\ upper $25\%$-quantiles are at  $14.07$ resp.\  $14.69$,  
		the minimum is $13.87$ and the maximum $15.23$.   
		Longer and shorter interspike times seem to alternate at random. 
		Benchmarks \eqref{benchmarks_T1} take the value $3.99$ 
		for local maxima of the output process in the long run, and  $2.99$ for local minima. The 
		interval $(2.99,3.99)$ is in good fit with the range of oscillations of the output process 
		$U$ on the second half of the time interval of observation.   
	}
	\label{fig:3}
\end{figure}

\section{Circuits of stochastic Hodgkin-Huxley neurons}\label{sect:circuits}

This section describes circuits of interacting stochastic Hodgkin-Huxley neurons where 
activity shows up in form of blocks of spiking neurons performing slow and rhythmic 
oscillation 
around the circuit. This self-organized rhythmic behaviour of activity patterns seems to be 
persistent in the long run. We are however unable to prove persistence --however strongly 
suggested by simulations-- and restrict this section to a detailed description of the 
construction and to some motivating remarks. 
Our construction relies on the notions of 'quiet behaviour' (Definition~\ref{def:31}) and of 
'regular spiking' (Definition~\ref{def:41})in order to define the interactions between the 
neurons in the 
circuit.

It  seems admitted that in networks of biological neurons \cite{Izh-07,ET-10}, information 
transfer happens in form of excitation and inhibition at a large number of synapses, a by far 
larger part of the synapses in the network being excitatory.  
In our circuit of interacting stochastic Hodgkin-Huxley neurons, we impose a block structure 
of 
the following type:  information transfer from one neuron to the next along the circuit will be 
excitatory as long as we remain inside the same block, and will be inhibitory when we pass 
from 
the last neuron in a block to the first neuron in its successor block. In this block-wise 
construction 
of the circuit, as  a consequence of  excitation and inhibition,  self-organized patterns of 
oscillation show up quite rapidly. Spiking activity is propagating from block to block around 
the 
circuit:  while some blocks are regularly spiking and in this sense active, others are quiet at the 
same time, and  at certain times, blocks flip from active to quiet, and back from quiet to active. 
In this way,  block-wise activity patterns  arise and perform a slow rotation along the circuit. 
This rotational movement seems to be persistent. However, nothing being proved so far, we 
only give the construction.

\begin{modstep}[Selecting suitable parameter values]\label{subsec:51} 
	~
With reference to the 
bistability interval 
$\mathbbm{I}_{\mathrm{bs}}$ of the deterministic case in Section~\ref{sect:det_HH}, we fix 
parameter values 
\begin{equation}\label{def:param_values}
	0 < \vartheta_1 := \lfloor \inf( \mathbbm{I}_{\mathrm{bs}} ) - 1 \rfloor 
	,\quad 
	\vartheta_2 := \lceil \sup( \mathbbm{I}_{\mathrm{bs}} ) + 1 \rceil 
	,\quad 
	0<\tau<\infty \quad,\quad 0<\sigma<\infty 
\end{equation}
such that a single stochastic Hodgkin-Huxley neuron \eqref{doublescript_X} in stationary 
regime tends to be  
\begin{align*}
\mbox{ quiet under } (\vartheta_1,\tau,\sigma) \mbox{  in the sense of Definition~\ref{def:31} }, 
\\
\mbox{ regularly spiking under } (\vartheta_2,\tau,\sigma) \mbox{   in the sense of 
Definition~\ref{def:41}}, 
\end{align*}
where we require that the back-driving force $\tau$ and the volatility $\sigma$ be the same in 
both 
cases.

Our choices in modelization step~\ref{subsec:51} require more explanation. 
First, in stationary regime and for $T_0$ and $K$ sufficiently large in the sense of 
Definition~\ref{def:31} and Example~\ref{exam:32}, we have to identify pairs $(\tau,\sigma)$ 
such that the 
probability to find the single stochastic Hodgkin-Huxley neuron \eqref{doublescript_X} under 
$(\vartheta_1,\tau,\sigma)$ in the event $Q(T_0,K)\in\mathcal{G}_{T_1}$ is close to $1$. 
Second, in stationary regime and for $T_1$ as in Definition~\ref{def:41} and 
Example~\ref{ex:42} (which 
represents a different choice of $T_1$), we have to identify pairs $(\tau,\sigma)$ such that the 
probability to find a single stochastic Hodgkin-Huxley neuron \eqref{doublescript_X} under 
$(\vartheta_2,\tau,\sigma)$ in the event $R(T_1)\in\mathcal{G}_{T_1}$ is close to~$1$. 
Third, we have to select \textbf{one} pair $(\tau,\sigma)$ which meets both requirements. This 
is possible. 
\end{modstep}
~

\begin{example}\label{ex:52}
As an example, with values 
\begin{equation}\label{choice_theta1_theta2_circuit}
	\vartheta_1 =  4  \quad,\quad    \vartheta_2 = 10
\end{equation}
satisfying \eqref{def:param_values} according to the numerical considerations of 
Section~\ref{sect:det_HH}, the simulations in Examples~\ref{exam:32} and \ref{ex:42} indicate 
that a combination of the volatility and the back-driving force of the form 
\[
\sigma=1.0 \quad\mbox{and}\quad \tau\ge 0.7 \quad,\quad\sigma=1.5 \quad\mbox{and}\quad 
\tau\ge 1.4 
\quad,\quad  \sigma=2.5 \quad\mbox{and}\quad  \tau\ge 2.5   
\]
meet the requirements of  modelization step~\ref{subsec:51}. See the schemes in step 
\ref{item:ex42_3}) of 
\ref{exam:32}, and 
the scheme in Example~\ref{ex:42}. 

For the stochastic neuron under $(\vartheta_2,\tau,\sigma)$ --regularly spiking with 
probability close to $1$ up to time $T_1$--  we fix a prediction  
$\Delta^* =  \Delta^*(\vartheta_2,\tau,\sigma)$  
for the median of interspike times in stationary regime in the long run: combining 
modelization step~\ref{subsec:51} and 
Example~\ref{ex:42}, with $T_1$,  $R(T_1)$, $\Delta(T_1)$ from Example~\ref{ex:42},  we 
define   
\begin{equation}\label{def_nustar}
	\Delta^* := \Delta(T_1)  
\end{equation}
which is the median of the interspike times observed up to time $T_1$. 
Every choice of a decay parameter $c_1$ in view of a construction 
\eqref{def_output_process} of an output process $U$  then associates to $\Delta^*$ an 
interval  \eqref{benchmarks_T1} 
\begin{equation}\label{nustar+ustars}
	(u^*_1 , u^*_2) \quad,\quad 
	u^*_2 := \sum_{j=0}^\infty e^{ - c_1 j \Delta^*}   
	\quad,\quad u^*_1 :=  \sum_{j=0}^\infty e^{- c_1 (j+1) \Delta^*}  
\end{equation}
on which we expect the output process $U$ to accumulate a large amount of occupation time 
in the long run. We wish to  scale the shape of output processes for regularly spiking neurons 
more or less independently of the parameters $(\vartheta_2,\tau,\sigma)$ and thus  of 
$\Delta^*$. 
\end{example}

\begin{modstep}[Calibration of the decay parameter for output 
	processes]\label{subsection:53} From $\Delta^*$ in 
\eqref{def_nustar} define 
\[
c^* :=  -\log\left(\frac{3}{4}\right) / \Delta^* ;  
\]
then, by an 'adapted' choice of the decay parameter $c_1>0$ in \eqref{def_output_process} , 
we 
consider  
intervals \eqref{nustar+ustars} which approximately do not depend on the parameters:  
\begin{equation}\label{def:approxcstar}
	c_1 \approx  c^* \quad,\quad u^*_2 = ( 1 - e^{ - c_1  \Delta^*} )^{-1}\approx 
	( 1 - e^{ - c^*  \Delta^*} )^{-1}  = 4 \quad,\quad  u^*_1 \approx 3 .  
\end{equation}
\end{modstep}
~
\begin{modstep}[Choice of transmission functions]\label{subsection:54}  Select some 
smooth 
	and strictly increasing 
function $\Psi^* : \mathbb{R} \to [0,1]$ with the properties  
\begin{equation}\label{cond_psinod}
	\lim_{v\to -\infty} \Psi^*(v) = 0 \quad,\quad
	\Psi^*(1) < 0.025 \quad,\quad \Psi^*(u^*_1) > 0.975  
	\quad,\quad  \lim_{v\to \infty} \Psi^*(v) = 1  
\end{equation}
for $c_1$ and $(u^*_1,u^*_2)$ selected in \eqref{def:approxcstar}. 
For $\vartheta_1 < \vartheta_2$ determined in modelization step~\ref{subsec:51}, use 
$\Psi^*$ to 
define a pair 
of 
transmission functions  
\begin{equation}\label{def_transition_functions}
	\left\{ \begin{array}{lll}
		\Psi_{\mathrm{exc}}(x) &:=& \vartheta_1 + (\vartheta_2-\vartheta_1)\Psi^*(x)  \\
		\Psi_{\mathrm{inh}}(x) &:=& \vartheta_2  -  (\vartheta_2-\vartheta_1)\Psi^*(x) 
	\end{array} \right.
	\quad,\quad x\in\mathbb{R}. 
\end{equation}
The first function in  \eqref{def_transition_functions} will be used to model excitation, the 
second inhibition.

As an example, using well known properties of the standard normal distribution function 
$\Phi$ and its quantiles, a choice  
\[
\Psi^*(x) :=  \Phi\left(\frac{ x - \frac{1+u^*_1}{2} }{ \frac{u^*_1-1}{6}}\right)  
\]
will satisfy \eqref{cond_psinod}. The transmission functions in 
\eqref{def_transition_functions}, excitatory or inhibitory, serve as a key tool to model 
information transfer between neurons in the circuit under construction. 
\end{modstep}
~
\begin{modstep}[Construction of the circuit]\label{subsection:55} Fix an integer $M\ge 3$ 
	which is odd, and some integer 
$L \ge 4 $. 
We shall construct a circuit of $N := ML$ neurons  
\begin{equation}\label{circuit_N_neurons_in_M_blocs}
	\mathcal{N}^{(1)} ,\ldots,  \mathcal{N}^{(N)}   
\end{equation}
where we count neurons around the circuit modulo $N$: in particular, $\mathcal{N}^{(0)}$ 
and $\mathcal{N}^{(N)}$ are different names for the same neuron in the circuit,  
$\mathcal{N}^{(N+1)}$ is $\mathcal{N}^{(1)}$, and so on. 
We arrange neurons along the circuit \eqref{circuit_N_neurons_in_M_blocs}  in $M$ blocks 
\begin{equation}\label{M_blocs}
	\{1,\ldots,L\}  ,  \{L{+}1,\ldots,2L\}  ,  \ldots  , \{(M-1)L + 1, \ldots,N\}  
\end{equation}
each of which contains $L$ neurons. Subsets of indices 
\begin{equation}\label{def:inh_exc_subsets}
	I_{\mathrm{inh}} := \{ 1 , L{+}1 , \ldots ,  (M-1)L + 1 \} \quad,\quad I_{\mathrm{exc}} 
	:= \{ 1 , 
	\ldots , 
	N \} \setminus  I_{\mathrm{inh}}    
\end{equation}
will be used to distinguish neurons $i\in I_{\mathrm{inh}}$ which  occupy the first position in 
their 
block (i.e.: $i$ equals $1$ modulo $L$) from neurons $i\in I_{\mathrm{exc}}$ which have 
their 
predecessor in the same block. We emphasize that the number $M$ of blocks in 
\eqref{M_blocs} has to be \textbf{odd}. 

In the circuit \eqref{circuit_N_neurons_in_M_blocs} with its block structure 
\eqref{M_blocs}--\eqref{def:inh_exc_subsets}
--where the successor of neuron $\mathcal{N}^{(N)}$ is $\mathcal{N}^{(1)}$ and the 
predecessor of neuron $\mathcal{N}^{(1)}$ is $\mathcal{N}^{(N)}$ , in the sense of the 
circuit--  
neurons $\mathcal{N}^{(i)}$, $i\in I_{\mathrm{exc}}$, will be excited by their predecessor, 
and 
neurons $\mathcal{N}^{(i)}$, $i\in I_{\mathrm{inh}}$, will be inhibited by their predecessor. 
So 
transfer inside blocks will always  be excitatory; from the last neuron in a block to the first 
neuron 
in the following block, transfer will be inhibitory.  
\begin{enumerate}[a)]
\item\label{item:20221228a} For the pair $(\tau,\sigma)$ which has been selected in 
modelization step~\ref{subsec:51}, 
prepare $N = M L$ 
independent Ornstein-Uhlenbeck processes $X^{(i)}$, strong solutions to equations  
\[
dX^{(i)}_t = -\tau X^{(i)}_t dt + \sigma dW^{(i)}_t \quad,\quad 1\le i\le N 
\]
driven by independent Brownian motions $W^{(i)}$. We stress that by choice in 
modelization step~\ref{subsec:51},  
the back-driving force and the volatility are the same for all processes $X^{(i)}$, $1\le i\le N$. 
\item\label{item:20221228b} With stochastic processes $A^{(i)}= ( A^{(i)}_t )_{t\ge 0}$ 
designed to model 
interaction 
and to be explained in~\ref{item:20221228d}) below, we define the $i$-th neuron 
$\mathcal{N}^{(i)}$ in the 
circuit, $1\le i\le N$, as a stochastic process 
\[
\mathcal{N}^{(i)} := 
\left( V^{(i)}, n^{(i)}, m^{(i)}, h^{(i)}, X^{(i)} \right)
\]
governed by a stochastic Hodgkin-Huxley equation of form 
\begin{equation}\label{circuit_single_HH}
	\left\{\begin{array}{lll}
		dV^{(i)}_t &= & A^{(i)}_t dt + dX^{(i)}_t  -  F(V^{(i)}_t, n^{(i)}_t, m^{(i)}_t, h^{(i)}_t) 
		dt \\
		dn^{(i)}_t &= &[\alpha_n(V^{(i)}_t)(1-n^{(i)}_t)  -  \beta_n(V^{(i)}_t) n^{(i)}_t] dt \\
		dm^{(i)}_t &= &[\alpha_m(V^{(i)}_t)(1-m^{(i)}_t)  -  \beta_m(V^{(i)}_t) m^{(i)}_t] dt 
		\\
		dh^{(i)}_t &= &[\alpha_h(V^{(i)}_t)(1-h^{(i)}_t)  -  \beta_h(V^{(i)}_t) h^{(i)}_t] dt. 
	\end{array}\right.
\end{equation}
\item\label{item:20221228c} Write $(\tau^{(j)}_n)_{n\in\mathbbm{N}}$ for the sequence of 
spike times of neuron 
$\mathcal{N}^{(j)}$, and associate a counting process  $N^{(j)}=(N^{(j)}_t)_{t\ge 0}$ to  
$(\tau^{(j)}_n)_{n}$. From  $N^{(j)}$ we define  an output process $U^{(j)}$ for neuron 
$\mathcal{N}^{(j)}$      
\begin{equation}\label{def:output_processes_circuit} 
	dU^{(j)}_t =  -c_1 U^{(j)}_{t-} dt + dN^{(j)}_t , t\ge 0    
\end{equation}
and stress that we use for all  $1\le j\le N$ the same decay parameter $c_1\approx c^*$ 
selected as in modelization step~\ref{subsection:53}. This implies that for all neurons  
$\mathcal{N}^{(j)}$, the 
same benchmarks 
\eqref{def:approxcstar} define an interval $(u_1^*,u_2^*)$ over which values of  $U^{(j)}$ are 
expected to fluctuate in case of regular spiking once time is large enough.  
\item\label{item:20221228d} At this stage, the structure of the processes  $A^{(i)}$ in 
\eqref{circuit_single_HH} 
can 
be specified as follows: 
\begin{eqnarray*}
	A^{(i)}_t := \Phi_{\mathrm{exc}}\left( U^{(i-1)}_{t-} \right) \quad,\quad i\in 
	I_{\mathrm{exc}},  \\
	A^{(i)}_t := \Phi_{\mathrm{inh}}\left( U^{(i-1)}_{t-} \right) \quad,\quad i\in 
	I_{\mathrm{inh}}. 
\end{eqnarray*} 
Here we use  \eqref{def:output_processes_circuit}, 
\eqref{cond_psinod}--\eqref{def_transition_functions},   \eqref{def:inh_exc_subsets}, 
and count  modulo $N$ around the circuit  \eqref{circuit_N_neurons_in_M_blocs}. 
In particular, at time~$t$, neuron $\mathcal{N}^{(1)}$ depends via 
\[
A^{(1)}_t = \Phi_{\mathrm{inh}}\left( U^{(N)}_{t-} \right)
\]
on the output of neuron $\mathcal{N}^{(N)}$ immediately before time $t$.  
\item To initialize the circuit  \eqref{circuit_N_neurons_in_M_blocs}, we  sample starting 
values  
\begin{enumerate}[i)]
\item $X^{(i)}_0$  --for the Ornstein-Uhlenbeck processes in \ref{item:20221228a})-- 
from the invariant law $\mathcal{N}(0, \frac{\sigma^2}{2\tau})$,  independently for all 
neurons; 

\item $(V^{(i)}_0,n^{(i)}_0,m^{(i)}_0,h^{(i)}_0)$ --for the biological variables in 
\ref{item:20221228b})-- 
from the uniform law on $(-12,120){\times}(0,1)$, independently for all neurons; 

\item[\mylabel{item:pointiiipage24}{iii)}] $U^{(i)}_0$ --for the output processes in 
\ref{item:20221228c})--  either: as random 
variables  
\begin{equation}\label{scenario3}
	\mbox{ $U^{(j)}_0$ independent, $1\le j\le N$, and distributed uniformly on $(1,u^*_1)$ 
	} , 
\end{equation}
or: deterministically  
\begin{equation}\label{scenario1}
	U^{(j)}_0 := 0 \quad\mbox{for all}\quad j=1,\ldots,N . 
\end{equation}
\addtocounter{enumii}{1}
\item In a last step, given the values selected in \ref{item:pointiiipage24} and defining by 
convention 
$U^{(i-1)}_{0-} 
:=U^{(i-1)}_0$, we determine starting values  $A^{(i)}_0$ for the input processes in 
\eqref{circuit_single_HH}, depending  on the output  $U^{(i-1)}_{0-}$ of the  
predecessor of neuron  $\mathcal{N}^{(i)}$ and on its position in the circuit, according to 
\ref{item:20221228d}) 
above.  
\end{enumerate}
This finishes the initialization of the circuit  \eqref{circuit_N_neurons_in_M_blocs}.
\end{enumerate}

We now explain why and  in which sense circuits constructed as explained in modelization 
steps~\ref{subsec:51}, \ref{subsection:53}, \ref{subsection:54}, \ref{subsection:55} will exhibit 
auto-generated rhythmic oscillation of spiking activity around the circuit. There is strong 
evidence from simulations, see figures \ref{fig:circuit_UJ0=0} and \ref{fig:circuit_Uj0random} 
as  two examples. We have no rigorous proofs so far. 
\end{modstep}
~
\begin{remark}\label{rem:56}
	We explain the behaviour of the circuit in the following points 
	\ref{item:641})--\ref{item:645}):  
	\begin{enumerate}[i)]
	\item\label{item:641} For a neuron  $\mathcal{N}^{(i)}$ whose predecessor belongs to the 
	same block, i.e.\ 
	for 
	$\mathcal{N}^{(i)}$ with index  $i\in I_{\mathrm{exc}}$: 
	\begin{enumerate}
		\item[\mylabel{item:641alpha}{$\alpha$)}] 
	Regular spiking of the predecessor  
	$\mathcal{N}^{(i-1)}$  over some 
	amount of 
	time drives values of its output process $U^{(i-1)}$ into neighbourhoods of the interval 
	$(u^*_1,u^*_2)$, hence values of  $A^{(i)}=\Phi_{\mathrm{exc}}(U^{(i-1}))$ towards 
	$\vartheta_2$. Quite rapidly, this will force  $\mathcal{N}^{(i)}$  into a regime of regular 
	spiking. 
\item[\mylabel{item:641beta}{$\beta$)}]  Quiet behaviour of the predecessor 
$\mathcal{N}^{(i-1)}$  over some 
	amount of 
	time forces its output  $U^{(i-1)}$ exponentially fast towards $0$, hence values of  
	$A^{(i)}=\Phi_{\mathrm{exc}}(U^{(i-1)})$ towards $\vartheta_1$. As a consequence,  
	neuron  
	$\mathcal{N}^{(i)}$ will soon be silenced and enter the quiet regime.  
	\end{enumerate}
	
	\item For a neuron $\mathcal{N}^{(i)}$ which occupies the first place in its block, i.e.\ for 
	$\mathcal{N}^{(i)}$ with  index  $i\in I_{\mathrm{inh}}$: 
	\begin{enumerate}
	\item[\(\alpha\))] Regular spiking of the predecessor over some amount of time drives its 
	output 
	$U^{(i-1)}$ into neighbourhoods of the interval $(u^*_1,u^*_2)$, hence values of  
	$A^{(i)}=\Phi_{\mathrm{inh}}(U^{(i-1)})$ down to $\vartheta_1$. As a consequence, 
	neuron  
	$\mathcal{N}^{(i)}$ will be silenced and enter the quiet regime.  
	\item[\(\beta\))] 
	Quiet behaviour of the predecessor over some amount of time forces its output  
	$U^{(i-1)}$ towards $0$, hence values of  $A^{(i)}=\Phi_{\mathrm{inh}}(U^{(i-1)})$ towards 
	$\vartheta_2$. As a consequence, neuron  $\mathcal{N}^{(i)}$ will be forced into a 
	regime of regular spiking. 
	\end{enumerate}
	\item\label{item:point3page25} When the blocks have suitable length $L$, the following 
	happens inside every block:
		\begin{enumerate}
		\item[\(\alpha\))]  excitation of successor neurons as in \ref{item:641} \ref{item:641alpha} 
		tends to propagate 
		through the 
	block as a whole (i.e.\ with high probability, the neuron in the last position of the block will 
	get 
	excited before randomness might generate other patterns), and finally all neurons in this 
	block will be regularly spiking; 
		\item[\(\beta\))] silencing of successor neurons as in  \ref{item:641}  \ref{item:641beta}  
		tends to  
		propagate 
		through the 
	block as a whole (with analogous caveat), and finally all neurons in the block will be 
	silenced.  
	\end{enumerate}
	Thus a pattern  distinguishing active blocks from quiet blocks will appear in the circuit.  
	\item Consider neurons $\mathcal{N}^{(i)}$ which occupy the first position $i\in 
	I_{\mathrm{inh}}$ in 
	their block. Then together with $\mathcal{N}^{(i)}$, the block as a whole  --as described in 
	\ref{item:point3page25})-- will 'flip' whenever the following happens : 	
	\begin{enumerate}
		\item[\(\alpha\))]From active to quiet, given that the block so far was regularly spiking:  
	at the time where activity propagating through the preceding block reaches position $i{-}1 
	\in 
	I_{\mathrm{exc}}$ and thus excites neuron $\mathcal{N}^{(i-1)}$, input $A^{(i)}=\Phi_{\rm 
	inh}(U^{(i-1)})$ will break down 
	(as a consequence of growing output $U^{(i-1)}$ since $i\in I_{\mathrm{inh}}$), thus neuron 
	$\mathcal{N}^{(i)}$ will be forced into silence.  
	\item[\(\beta\))]   From quiet to active, given that the block so far was quiet:  
	at the time where silence propagating through the preceding block attains position $i{-}1$ 
	and  silences 
	neuron $\mathcal{N}^{(i-1)}$, input $A^{(i)}=\Phi_{\mathrm{inh}}(U^{(i-1)})$ will increase 
	for 
	position $i$ (as a consequence of decreasing  output $U^{(i-1)}$ since $i\in 
	I_{\mathrm{inh}}$), 
	thus neuron $\mathcal{N}^{(i)}$ will be forced into regular spiking regime. 
	In both cases, by \ref{item:point3page25}), silence of $\mathcal{N}^{(i)}$ or activity of 
	$\mathcal{N}^{(i)}$  will 
	propagate over the corresponding block.  
	\end{enumerate}
	\item\label{item:645} The number $M$ of blocks being odd by assumption in 
	modelization step~\ref{subsection:55}, stable 
	coexistence in 
	equal 
	number of blocks which are permanently quiet alternating with blocks which are 
	permanently 
	active is impossible; in permanence, blocks will be obliged to 'flip' when either some active 
	block is approached from the left by propagating activity, or some silent block is 
	approached 
	from the left by silence. By 'flipping' of suitable blocks at suitable times, the pattern of 
	alternating active and quiet regions around the circuit performs some kind of 
	counter-clockwise rotation which looks very much like  a periodic phenomenon. 
	\end{enumerate}
\end{remark}

Figures \ref{fig:circuit_UJ0=0} and \ref{fig:circuit_Uj0random} below illustrate how oscillating 
activity patterns according to blocks in circuits described in 
modelization steps~\ref{subsec:51}, \ref{subsection:53}, \ref{subsection:54}, 
\ref{subsection:55} appear 
and 
stabilize in a slow rhythmic 
rotation around the circuit. Both figures use the same delay parameter $c_1$  chosen 
according to \eqref{def:approxcstar}, and the same parameter values for $\tau$ and $\sigma$ 
chosen as in modelization step~\ref{subsec:51}. The choice of starting values for the 
collection of output 
processes 
is somewhat different: figure \ref{fig:circuit_UJ0=0} has $U^{(j)}_0=0$ for all $j$ as in 
\eqref{scenario1}, 
and figure \ref{fig:circuit_Uj0random} selects for all $j$ an initial position  uniformly on 
$(1,u^*_1)$ as in \eqref{scenario3}. In both cases, after some  initial phase of randomness 
which prevails in all blocks, spiking activity in one block turns out to be strong enough to 
silence 
its successor block, thus initializing a rotative motion of silent and active regions along the 
circuit.

\begin{remark}\label{rem:57}
	With reference to Ditlevsen and L\"ocherbach \cite{DL-17}, we discuss a deterministic 
	reference model which explains why we expect the self-organized rhythmic behaviour of 
	the system constructed in modelization step~\ref{subsection:55} --illustrated by figures 
	\ref{fig:circuit_UJ0=0} and 
	\ref{fig:circuit_Uj0random}, and explained in remark~\ref{rem:56}-- to be persistent in the 
	long run, certainly from time to time perturbed in a random way but always restoring itself 
	rapidly in the sequel. The reference model is a simplified special case of the deterministic 
	limit model in \cite{DL-17}. 
	
	Think of a deterministic system of dimension $N=ML$ where real-valued variables $t \to 
	x_i(t)$ represent in some way a spiking activity of neuron $i$ as a function of time, with 
	neurons arranged as a circuit of $M$ blocks of $L$ neurons, and where counting modulo 
	$N$ the interaction is of type   
	\[
	\dfrac{d x_i}{dt}(t) = \left\{ \begin{array}{ll}
		-c x_i(t)  -  f(x_{i-1}(t))  &\quad\mbox{if}\quad  i\in \mathbbm{I}_{\mathrm{inh}}   \\
		-c x_i(t) + f(x_{i-1}(t))   &\quad\mbox{if}\quad  i\in \mathbbm{I}_{\mathrm{exc}}
	\end{array} \right. 
	\]
	with $f$ some smoothed version of the truncated identity $x \to (x\vee {-}1)\wedge 1$, and 
	$c\in (0,1)$ some constant. As in \eqref{def:inh_exc_subsets}, indices  
	$\mathbbm{I}_{\rm 
		inh}$ correspond to neurons which occupy  the first position in their block. 
	
	Under the condition that i) $M$ is odd and ii) $c$ is small enough, this system evolves on a 
	finite number of periodic orbits, and at least one periodic orbit is stable. This follows from 
	Theorem~3 in Section~4 of \cite{DL-17}. 
	Random initial conditions in this deterministic model ($(x_1(0),\ldots,x_N(0))$ drawn from a 
	uniform law on $(-1,1)^N$) produce activity patterns very similar to what we see in figures 
	\ref{fig:circuit_UJ0=0} and \ref{fig:circuit_Uj0random}.
\end{remark}

\begin{remark}\label{rem:58}
	We emphasize that modelization step~\ref{subsec:51} requires regular spiking under 
	$\vartheta_2$ and 
quiet behaviour under $\vartheta_1$, both  with probability close to $1$, under the same pair 
$(\tau,\sigma)$ governing the Ornstein-Uhlenbeck noise in all neurons. Choice of  
$(\tau,\sigma)$ 
is of key importance for the feature of self-organized oscillation in systems constructed in 
modelization steps~\ref{subsec:51}, \ref{subsection:53}, \ref{subsection:54}, 
\ref{subsection:55} of 
interacting stochastic Hodgkin-Huxley models. The feature of interest --self-organized slow 
rhythmic oscillation of activity patterns around the circuit-- will be destroyed when the 
volatility 
$\sigma$ becomes too large or the back-driving force $\tau$ too small: then for all neurons in 
the 
circuit, the spiking activity will be more or less irregular or chaotic. In this sense, 
Definitions~\ref{def:31} and \ref{def:41} are of key importance for our construction.  
\end{remark}

\begin{figure}[h]
	\centering
	\includegraphics[width=0.95\linewidth]{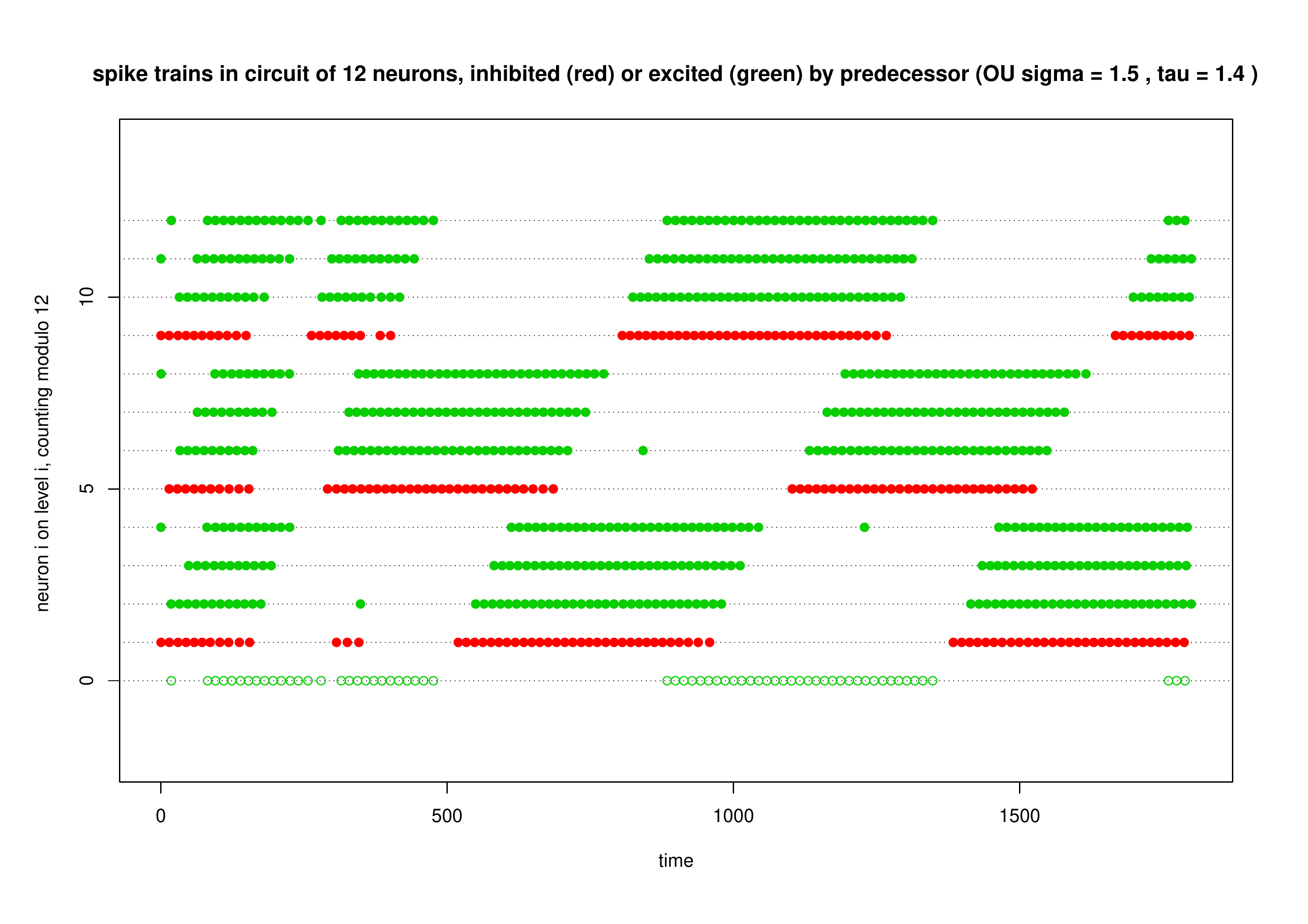} 
	\caption{
		Simulation of a circuit described in modelization 
		steps~\ref{subsec:51}, \ref{subsection:53}, \ref{subsection:54} and \ref{subsection:55} 
		up to 
		time 1800, using Euler 
		schemes of step size 0.001, 
		with $N=12$ neurons in $M=3$ blocks of $L=4$ cells.  
		We show the spike times of all neurons, with neuron $\mathcal{N}^{(i)}$ represented at 
		horizontal level $i$, and $\mathcal{N}^{(12)}=\mathcal{N}^{(0)}$ represented twice to 
		visualize the cyclic structure.  
		We use red dots for positions $i\in \mathbbm{I}_{\mathrm{inh}}$, green dots for $i\in 
		\mathbbm{I}_{\mathrm{exc}}$. 'Noise' has 
		parameter values $\sigma=1.5$ and $\tau=1.4$, as in Example~\ref{ex:52}. The decay 
		parameter 
		$c_1=0.02$ for output processes satisfies \eqref{def:approxcstar}, we have 
		$\Delta^*\approx 14.3$ in \eqref{def_nustar} and $u_1^*\approx 3.01$ in 
		\eqref{nustar+ustars}. 
		Initial conditions for output processes are \eqref{scenario1}: $U^{(j)}_0= 0$ for all $j$. 
		Thus the
		input processes $A^{(i)}$ start at $\vartheta_2=10$ for $i\in 
		\mathbbm{I}_{\mathrm{inh}}$, and 
		at 
		$\vartheta_1=4$ for $i\in \mathbbm{I}_{\mathrm{exc}}$; this gives a slight 'advantage' to 
		positions 
		$i\in \mathbbm{I}_{\mathrm{inh}}$ which tend to spike earlier,  exciting successor 
		neurons in 
		the 
		same block. In the initial phase of this simulation, the block $\{9,10,11,12\}$ is the first 
		which 
		succeeds in silencing its successor block. 
	}
	\label{fig:circuit_UJ0=0}
\end{figure}

\begin{figure}[h]
	\centering
	\includegraphics[width=0.95\linewidth]{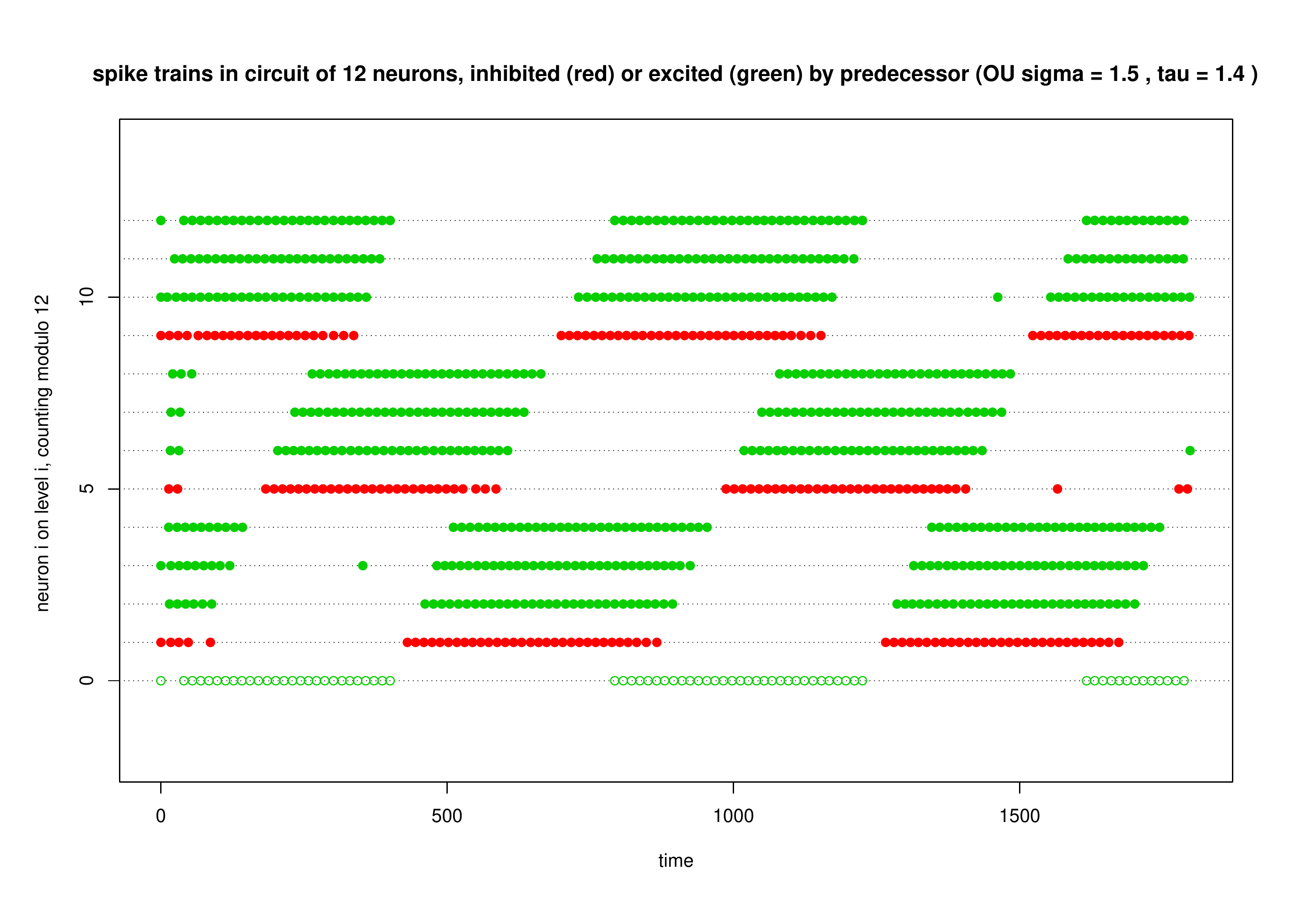} 
	\caption{\small 
		Simulation of a circuit with the same structure and the same parameters as in figure 
		\ref{fig:circuit_UJ0=0}, except that now we use random initial conditions  
		\eqref{scenario3} for the output processes: for all $j$,   $U^{(j)}_0$ is distributed 
		uniformly on $(1,u^*_1)$. Quite rapidly, the circuit organizes itself in patterns which 
		block-wise perform a slow rotation around the circuit. 
	}
	\label{fig:circuit_Uj0random}
\end{figure}

%
\section{Appendix : details for some proofs in Section~\ref{sect:stoch_HH} 
}\label{sect:proof_prop_2.5} 
%

Fix $\vartheta$, $\tau$ and $\sigma$ and suppress corresponding superscripts ($Q_x = 
Q_x^{(\vartheta,\tau,\sigma)}$,  $E_\mu = E_\mu^{(\vartheta,\tau,\sigma)}$, etc.). 

Under the lower bound condition given by Theorem~\ref{theo:21}~\ref{item:21d}), with $T$, 
$\alpha$, 
$\nu$ and $C$ as there, Nummelin splitting in the grid chain 
$(\mathbb{X}_{kT})_{k\in\mathbbm{N}_0}$ works as follows. 
Prepare i.i.d.\ random variables $(V_k)_{k\in\mathbbm{N}_0}$, uniformly distributed on 
$(0,1)$, and independent of the process  $(\mathbb{X}_{kT})_{k\in\mathbbm{N}_0}$. 
Whenever the grid chain enters the 'small set' $C$ at a time $k$ in a state $x\in C$, we split 
the transition away from $x$ according to the value of $V_k$: on $\{V_k<\alpha\}$, we select 
the successor state $y$ for $x$ according to $\nu(dy)$; on $\{V_k\ge \alpha\}$, we select  
$y$ according to the probability measure $\frac{P_T(x,dy)-\alpha\nu(dy)}{1-\alpha}$. Apply 
colors as follows: on $\{V_k<\alpha\}$ we color $(k,x)$ 'red' and  $(k{+}1,y)$ 'green'; on 
$\{V_k\ge \alpha\}$ we color both $(k,x)$ and  $(k{+}1,y)$ 'blue'. All other transitions remain 
uncolored. 
This amounts to an extension of the underlying probability space such that for the 'colored' 
grid chain $(\mathbb{X}_{kT})_{k\in\mathbbm{N}_0}$, the set of 'green' time points defines a 
sequence of renewal times where the grid chain starts afresh from law $\nu$. This is 
Nummelin \cite{Num-78}. 
If we define 
\[
R_0\equiv 0 \quad,\quad 
R_{n+1} := \inf \left\{ k>R_n : \mathbb{X}_{kT} \in C\quad\mbox{and}\quad V_k < \alpha 
\right\} 
, n\in \mathbbm{N}_0  ,  
\]
then $(R_n)_n$ is a sequence of stopping times with respect to the discrete filtration 
\[
\check{\mathbb{F}} = (\check{\mathcal{F}_k})_{k\in\mathbbm{N}_0}
\quad,\quad \check{\mathcal{F}_k} := \sigma\left( \mathbb{X}_{jT} , V_j : 0\le j\le k \right) . 
\]
Harris recurrence implies $R_n\uparrow \infty$ almost surely as $n\to\infty$. 
In discrete time, successive path segments from 'green' to subsequent 'red' times (i.e.\ from 
$R_n+1$ to $R_{n+1}$, $n\in\mathbbm{N}$) decompose the trajectory of 
$(\mathbb{X}^{(\vartheta)}_{kT})_{k\in\mathbbm{N}_0}$ into i.i.d.\ excursions which we call 
life 
cycles, up to some initial segment. Positive Harris recurrence by Theorem~\ref{theo:21} grants 
that the expected length $E_\bullet(R_2-R_1)$ of a life cycle is finite (and independent of the 
starting point of $\mathbb{X}$).

In continuous time we consider the filtration 
\begin{equation}\label{extended_cont_filtration}
	\mathbb{F} := \left( \mathcal{F}_t \right)_{t\ge 0}  \quad,\quad 
	\mathcal{F}_t := \bigcap_{r>t} \mathcal{F}^{\circ}_r \quad,\quad 
	\mathcal{F}^{\circ}_t := \left\{ \mathbb{X}_s , s\le t  ,  V_j , j T \le  t \right\}  
\end{equation}
generated by the pair 
\[
\left( \mathbb{X}^{(\vartheta)}  ,  \sum_j V_j \mathbbm{1}_{\llbracket jT , (j{+}1)T 
\llbracket}  \right) ,    
\]
with $T$ as above. 
Then $( R_n T )_{n\in\mathbbm{N}}$ is a sequence of $\mathbb{F}$-stopping times 
increasing to $\infty$. 
If we think of the continuous-time process in terms of bridges pasted into the grid chain, 
Nummelin splitting in the grid chain with coloring as above shows that path segments 
\begin{equation}\label{def_1_life_cycle}
	\mathbb{X} \mathbbm{1}_{ \llbracket (R_n+1)T , R_{n+1}T \rrbracket } \quad,\quad 
	n\in\mathbbm{N}
\end{equation}
from 'green' to subsequent 'red' times are i.i.d.\  in the continuous-time setting (\cite{HL-03}); 
note that here we do leave out a short piece of trajectory from 'red' to 'green', of length $T$, 
for all $n$. For all $n\in\mathbbm{N}$, the $\mathbb{F}$-stopping times $(R_n{+}1)T$ are 
renewal times where the process starts anew from initial law $\nu$; the future following time 
$(R_n{+}1)T$ is independent from the past $\mathcal{F}_{R_nT}$ up to time $R_n T$.

When we prefer to consider path segments 'from green to green'  
\begin{equation}\label{def_2_life_cycle}
	\mathbb{X} \mathbbm{1}_{ \llbracket (R_j+1)T , (R_{j+1}+1)T \llbracket } \quad,\quad 
	j\in\mathbbm{N}, 
\end{equation}
then  these also are identical in law, but independence holds only two-by-two (cf.
L\"ocher\-bach and Loukianova \cite{LL-08}): those with  $j$ even are i.i.d., and  
--separately-- 
those with $j$ odd.  The same reasoning shows that for every $K\in\mathbbm{N}$ which we 
keep fixed, path segments 'from green to green over $K-1$ renewal intervals'  
\begin{equation}\label{def_3a_life_cycle}
	\mathbb{X} \mathbbm{1}_{ \llbracket (R_{jK+r}+1)T , (R_{(j+1)K+r-1}+1)T \llbracket } 
	\quad,\quad 0< 
	r\le K,
	j\in\mathbbm{N}_0   
\end{equation} 
are identical in law, but independence holds $K$-by-$K$ only: fixing $r$, path 
segments        
\begin{equation}\label{def_3b_life_cycle}
	\mathbb{X} \mathbbm{1}_{ \llbracket (R_{jK+r}+1)T , (R_{(j+1)K+r-1}+1)T \llbracket } 
	\quad,\quad 
	j\in\mathbbm{N}_0   
\end{equation}
are independent when $0< r\le K$ is fixed.  
We shall speak of \eqref{def_2_life_cycle} or of \eqref{def_3a_life_cycle} as life cycles. 
\begin{lemma}\label{lem:61}
	Fix a natural number $L>1$. For every point  $v=(v_1,v_2, \ldots, v_L) \in 
[0,\infty)^L$,  with notation $[0,v]:=\mathop{\mathsf{X}}\limits_{i=1}^L[0,v_i]$, and every 
function  
$h:[0,\infty)^L\to\mathbb{R}$ which is measurable and bounded, writing 
$\mathbbm{1}_{[0,v]} h$ for the product $\mathbbm{1}_{[0,v]}\cdot h$, we have almost 
sure convergence of 
\begin{equation}\label{def:G_hat_n}
	\widehat{G}_m( v , h ) := \frac{1}{m} \sum_{n=1}^m \left( \mathbbm{1}_{[0,v]} h \right) 
	\left( \tau_{n+1} - \tau_n  ,  \ldots   ,   \tau_{n+L} - \tau_{n+L-1}  \right)   
\end{equation}
to a deterministic limit as $m\to\infty$.
\end{lemma}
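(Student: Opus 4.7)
The plan is to leverage the Nummelin splitting construction from the appendix in order to reduce the claim to a strong law of large numbers along life cycles. Write $f := \mathbbm{1}_{[0,v]}\, h$, which is bounded and measurable, and set $Y_n := (\tau_{n+1}-\tau_n,\ldots,\tau_{n+L}-\tau_{n+L-1})$. The quantity $\widehat{G}_m(v,h)$ is an average of $f(Y_n)$ over a spike-number index, so \eqref{as_limits_3} cannot be applied directly: the indexing does not match a fixed time grid, and neighbouring tuples $Y_n$, $Y_{n+1}$ overlap in $L{-}1$ interspike times.

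First I would fix the splitting data $(T,\alpha,\nu,C)$ of Theorem~\ref{theo:21}\ref{item:21d}) and consider the 'green-to-green' life cycles \eqref{def_2_life_cycle}, identically distributed, with even-indexed and odd-indexed subsequences each i.i.d. Let $M_j$ denote the number of spikes in the $j$-th life cycle and $S_J := \sum_{j=1}^J M_j$. By positive Harris recurrence $E_\bullet(R_2-R_1)<\infty$, and since $M_j \le T(R_{j+1}-R_j)/\delta_0$ by construction \eqref{def_spiketimes}, $E(M_1) \in (0,\infty)$. I would then decompose
\[
\sum_{n=1}^{S_J-L} f(Y_n) \;=\; \sum_{j=1}^J A_j \;+\; \sum_{j=1}^{J-1} B_j ,
\]
where $A_j$ collects those $n$ for which all of $\tau_n,\ldots,\tau_{n+L}$ belong to life cycle $j$, and $B_j$ collects the straddling tuples whose $L+1$ spikes cross from life cycle $j$ into $j+1$ (or into further cycles, in the unlikely case that subsequent cycles are very short). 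Since each life cycle has at most $L$ straddling tuples at its right end, $|B_j| \le L\|h\|_\infty$; and $|A_j| \le M_j \|h\|_\infty$.

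Second I would apply SLLN to the three sequences $(A_j)$, $(B_j)$ and $(M_j)$. Each is a stationary functional of finitely many consecutive life cycles of an i.i.d./two-by-two-independent sequence, so splitting into a bounded number of interleaved i.i.d.\ subsequences and applying Kolmogorov's SLLN on each yields
\[
\frac{1}{J}\sum_{j=1}^J A_j \to E(A_1), \qquad \frac{1}{J}\sum_{j=1}^{J-1} B_j \to E(B_1), \qquad \frac{S_J}{J} \to E(M_1)
\]
almost surely. Combining these three limits,
\[
\frac{1}{S_J-L}\sum_{n=1}^{S_J-L} f(Y_n) \;\longrightarrow\; \frac{E(A_1)+E(B_1)}{E(M_1)} \;=:\; c(v,h) \quad\text{a.s.}
\]
For arbitrary $m$ one sandwiches $m$ between $S_{J(m)-1}$ and $S_{J(m)}$; the remainder is at most $(M_{J(m)}/m)\|h\|_\infty$, which vanishes a.s.\ since $M_{J(m)}=o(m)$. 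The finitely many spike-indices preceding the first life cycle are absorbed into the same $o(1)$ error.

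The main obstacle will be treating the boundary terms $B_j$: in principle an $L$-tuple beginning near the end of life cycle $j$ can reach over several subsequent cycles, so $B_j$ is not quite a functional of $(j,j{+}1)$ alone. The resolution is that the uniform bound $|B_j|\le L\|h\|_\infty$ keeps $(B_j)$ integrable and the decomposition into a fixed number of interleaved i.i.d.\ subsequences remains valid; one does not need to identify $c(v,h)$ explicitly, only to know it is deterministic, which is all the lemma asks for.
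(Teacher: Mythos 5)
Your plan is close in spirit to the paper's argument -- reduce to a strong law along Nummelin life cycles, and handle the overlap of successive $L$-tuples via a decomposition into intra-cycle and boundary contributions -- but there is a genuine gap exactly at the point you flag and then wave away. The boundary terms $B_j$ are \emph{not} functionals of a bounded number of consecutive life cycles: an interspike time is unbounded, life cycles can be spikeless, and hence an $L$-tuple of interspike times starting near the end of cycle $j$ can straddle arbitrarily many later cycles. Your ``resolution'' only secures integrability of $B_j$; it does nothing to restore the stationarity-after-thinning structure you need, and Kolmogorov's SLLN for a ``bounded number of interleaved i.i.d.\ subsequences'' simply does not apply to a sequence with unbounded-range dependence. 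Integrability is not the obstacle -- dependence is.

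What repairs the argument (and what the paper does) is to exploit the indicator $\mathbbm{1}_{[0,v]}$ in $f=\mathbbm{1}_{[0,v]}h$: on the event that the $n$-th summand is nonzero, every one of the $L$ relevant interspike times is at most $v_i$, so $\tau_{n+L}-\tau_n\le\sum_{i=1}^L v_i$ and the tuple cannot reach past time $(R_{j+1}+1)T+\sum_i v_i$. Choosing $K\ge 2+\frac{1}{T}\sum_i v_i$, the contribution attached to cycle $j$ is $\mathcal{F}_{R_{j+K}T}$-measurable while also independent of $\mathcal{F}_{R_jT}$; hence the per-cycle sums are identically distributed and independent $K$-by-$K$, and the SLLN applies after thinning into $K$ interleaved i.i.d.\ subsequences. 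The paper's own decomposition is actually slightly cleaner than your $A_j/B_j$ split: it defines a single variable $Z_j(v,h)$ collecting \emph{all} contributions whose leading spike $\tau_n$ falls in cycle $j$ (so no separate boundary term is needed), proves $K$-by-$K$ independence for this single sequence, and then identifies the normalization by a separate spike-count SLLN as you propose. Note also that the paper explicitly remarks (Remark~\ref{rem:64}~\ref{item:rem64b}) that the presence of the indicator $\mathbbm{1}_{[0,v]}$ is essential and cannot be omitted -- which is precisely the ingredient your proposal fails to invoke at the critical step.
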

\begin{proof} 
	We  modify the proof of Theorem~2.9 in \cite{HLT-17}, section 5.  
	
	\begin{enumerate}[1)] 
	\item\label{item:20221228_1} We count spikes in the life cycles  \eqref{def_2_life_cycle}: 
	\begin{equation}\label{def:Z_j_tilde}
		\widetilde{Z}_j := \sum_{n=1}^\infty \mathbbm{1}_{ \{ (R_j+1) T \le \tau_n < 
		(R_{j+1}+1) T 
		\} } 
		\quad,\quad j\in\mathbbm{N}. 
	\end{equation}
	Since interspike times are $> \delta_0$ by construction, $\widetilde{Z}_j$ is upper bounded 
	by 
	$\frac{1}{\delta_0} (R_{j+1}-R_j) T$ which has finite expectation. Thus the 
	$(\widetilde{Z}_j)_j$, 
	identical in law, belong to $L^1(Q_\mu)$, and are  independent two-by-two using 
	\eqref{def_2_life_cycle}. As a consequence, 
	\begin{eqnarray*}
		\lim_{m\to\infty} \frac{1}{2m} \sum_{j=1}^{2m} \widetilde{Z}_j =
		\frac{1}{2}  \lim \frac{1}{m} \sum_{j=1}^m \widetilde{Z}_{2j} + \frac{1}{2}  \lim 
		\frac{1}{m} 
		\sum_{j=0}^{m-1} \widetilde{Z}_{2j+1}   
		=  E_\bullet\left( \widetilde{Z}_1  \right) 
	\end{eqnarray*}
	exists almost surely, and thus also $\lim \frac{1}{m} \sum_{j=1}^m \widetilde{Z}_j$. The limit 
	does 
	not 
	depend on the starting point. Now asymptotically as $m\to\infty$
	\begin{eqnarray*}
		\frac{1}{m} \sum_{j=1}^m  \widetilde{Z}_m   &=&   \frac{1}{m} \cdot \mbox{number of 
		spikes in} \;
		\llbracket 
		(R_1+1)T , (R_{m+1}+1)T \llbracket   \\
		&=&  \frac{1}{m} \cdot\, \mbox{number of spikes in}\; \rrbracket (R_1+1)T , 
		(R_{m+1}+1)T 
		\rrbracket  
		+ o_{Q_x}(1) \\
		&=&  \frac{1}{m} \cdot\, \mbox{number of spikes in}\; \rrbracket 0, (R_{m+1}+1)T 
		\rrbracket  
		+
		o_{Q_x}(1) \\
		&=&  \frac{1}{m} N_{ (R_{m+1}+1)T } + o_{Q_x}(1)
	\end{eqnarray*}
	whence with the help from Proposition~\ref{prop:24} 
	\begin{equation}\label{lim_Z_j_tilde}
		E_\mu\left( \widetilde{Z}_1 \right) =
		\lim_{m\to\infty} \frac{1}{m} \sum_{j=1}^m \widetilde{Z}_j =  T E_\mu(N_1)
		E_\mu(R_2-R_1). 
	\end{equation}
	\item\label{item:20221228_2} Fix $v=(v_1,v_2, \ldots, v_L) \in [0,\infty)^L$ and  
	$h:[0,\infty)^L\to\mathbb{R}$ 
	measurable and bounded, and  consider 
	\begin{equation}\label{def:Zj_v_h}
		Z_j( v , h ) := \sum_{n=1}^\infty \mathbbm{1}_{\{ (R_j+1) T \le \tau_n < (R_{j+1}+1) 
		T 
			\} }  \left( \mathbbm{1}_{[0,v]} h \right) \left( \tau_{n+1}-\tau_n , \ldots , 
		\tau_{n+L}-\tau_{n+L-1}  \right)  
	\end{equation} 
	for $j\in\mathbbm{N}$. Then the $(Z_j(v,h)_j$ are identical in law, and belong to 
	$L^1(Q_\mu)$ by comparison with \eqref{def:Z_j_tilde}.  
	We shall show in two steps --\ref{item:pointipage30}) and \ref{item:pointiipage30}) below-- 
	that the variables $(Z_j(v,h)_j$ in 
	\eqref{def:Zj_v_h} are independent $K$-by-$K$ provided we choose $K\in\mathbbm{N}$ 
	large enough, i.e. for 
	\begin{equation}\label{choice_K}
		K \ge  2 +  \frac{1}{T} \sum_{i=1}^L v_i    . 
	\end{equation}
	\begin{enumerate}[i)]
		\item\label{item:pointipage30} In \eqref{def:Zj_v_h} we have to consider events 
	\[
	B(j,n) := \left\{(R_j+1) T \le \tau_n < (R_{j+1}+1) T \right\} 
	\bigcap\bigcap_{i=1}^L
	\left\{ \tau_{n+i}-\tau_{n+i-1} \in [0,v_i] 
	\right\} 
	\]
	on which 
	\[
	\tau_{n+L}  =   \tau_n + \sum_{i=1}^L (\tau_{n+i}-\tau_{n+i-1})  < (R_{j+1}+1) T + 
	\sum_{i=1}^L v_i   ; 
	\]
	thus with the choice \eqref{choice_K}, the following holds on $B(j,n)$: 
	\[
	\tau_{n+L}  < (R_{j+1}+1) T + (K-2) T = (R_{j+1}+K-1) T <  R_{j+K}T. 
	\]
	\item\label{item:pointiipage30} Now 
	\[
	\left\{ (R_j+1) T \le \tau_n < (R_{j+1}+1) T \right\}\in \mathcal{F}_{\tau_n}
	\]
	and $(\tau_j)_j$ is an increasing sequence of $\mathbb{F}$-stopping times: thus 
	the event $B(j,n)$  belongs to the $\sigma$-field $\mathcal{F}_{R_{j+K}T}$ of events up 
	to 
	time $R_{j+K}T$. Since the last $\sigma$-field does not depend on $n$, the definition 
	\eqref{def:Zj_v_h} grants that for every $j\in\mathbbm{N}$, 
	\begin{equation}\label{indep_a_Zj}
		\mbox{ 
			the variable $Z_j( v , h )$ is measurable with respect to $\mathcal{F}_{R_{j+K}T}$  }
	\end{equation}
	whereas the construction of the renewal times $(R_j+1) T$ for the continuous-time process 
	$\mathbb{X}$ implies 
	\begin{equation}\label{indep_b_Zj}
		\mbox{ 
			the variable $Z_j( v , h )$ is independent of  $\mathcal{F}_{R_j T}$  }
	\end{equation}
	for all $j$. As a consequence, the family $(Z_j( v , h ))_j$ is independent $K$-by-$K$ as 
	asserted since 
	we have for every  $0< r\le K$  independence in restriction to  the subfamily   
	$\left\{ Z_{\ell K+r}( v , h )  ,  \ell\in\mathbbm{N}_0 \right\}.$
	\end{enumerate}
	\item\label{item:20221228_3}  The  $(Z_j( v , h ))_j$ being identical in law and 
	independent  $K$-by-$K$, a 
	deterministic limit 
	\begin{align*}
	\lim_{m\to\infty} \frac{1}{m} \sum_{j=1}^m Z_j(v,h) &= 
	\lim_{m\to\infty} \frac{1}{mK} \sum_{j=1}^{mK} Z_j(v,h)\\
	& = 
	\frac{1}{K} \sum_{r=1}^K \left( \lim_{m\to\infty} \frac{1}{m} \sum_{\ell=0}^{m-1} Z_{\ell 
	K+r}(v,h) 
	\right) = E_\mu(Z_1(v,h)) 
	\end{align*}
	exists almost surely, by the classical strong law of large numbers.  This is the essential step 
	in the proof of the lemma. 
	\item\label{item:20221228_4} Asymptotically as $m\to\infty$ we can write  
	\[
	\frac{1}{m} \sum_{j=1}^m Z_j(v,h)
	\]
	in the following form:  
	\begin{eqnarray*}
		&&\frac{1}{m} \sum_{j=1}^m \sum_{n=1}^\infty \mathbbm{1}_{\{ (R_j+1)T \le \tau_n < 
			(R_{j+1}+1)T \}} \left( \mathbbm{1}_{[0,v]} h\right) \left(\tau_{n+1}-\tau_n 
			,\ldots,  
		\tau_{n+L}-\tau_{n+L-1} \right) \\
		&&=\quad  \frac{1}{m}   \sum _{n=1}^\infty \mathbbm{1}_{\{ (R_1+1)T \le \tau_n < 
			(R_{m+1}+1)T \}} \left( \mathbbm{1}_{[0,v]} h\right) \left(\tau_{n+1}-\tau_n 
			,\ldots,  
		\tau_{n+L}-\tau_{n+L-1} \right) \\
		&&=\quad  \frac{1}{m}   \sum _{n= N_{(R_1+1)T} + 1 }^{ N_{(R_{m+1}+1)T} }  
		\left( \mathbbm{1}_{[0,v]} h\right) 
		\left(\tau_{n+1}-\tau_n ,\ldots,  \tau_{n+L}-\tau_{n+L-1} \right) + 
		o_{Q_x}(1)\\
		&&=\quad  \frac{1}{m}   \sum _{n=1}^{ N_{(R_{m+1}+1)T} }  \left( \mathbbm{1}_{[0,v]} 
		h\right) 
		\left(\tau_{n+1}-\tau_n ,\ldots,  \tau_{n+L}-\tau_{n+L-1} \right) + 
		o_{Q_x}(1)\\
		&&=\quad  \frac{ N_{(R_{m+1}+1)T }}{m} \widehat{G}_{ N_{(R_{m+1}+1)T} } (v,h) 
		+ 
		o_{Q_x}(1) \\
		&&=\quad  T E_\mu\left( N_1 \right) E_\mu\left( R_2 - R_1 \right) \widehat{G}_{ 
		N_{(R_{m+1}+1)T} } (v,h) + o_{Q_x}(1)   
	\end{eqnarray*}
	with the help from Proposition~\ref{prop:24} and since $h:[0,\infty)^L\to\mathbb{R}$ is 
	bounded. 
	From the last line and step \ref{item:20221228_3}) it follows that 
	\begin{equation}\label{limes_a_Ghutm}
		\lim_{m\to\infty} \widehat{G}_{ N_{(R_{m+1}+1)T} } (v,h) 
	\end{equation}
	exists almost surely and equals 
	\begin{equation}\label{limes_b_Ghutm}
		E_\mu\left( Z_1(v,h) \right) / \left( T E_\mu\left( N_1 \right) E_\mu\left( R_2 - R_1 
		\right) \right) 
		\quad=\quad E_\mu\left( Z_1(v,h) \right) / E_\mu\left( \widetilde{Z}_1 \right)
	\end{equation}
	with reference to step \ref{item:20221228_1}). 
	Decomposing $h$ into positive and negative part shows that it is sufficient to consider 
	$h\ge 0$: but then, existence of the limit in \eqref{limes_a_Ghutm} is equivalent to 
	existence of the limit 
	\begin{equation}\label{limes_c_Ghutm}
		\lim_{m\to\infty} \widehat{G}_m (v,h) 
	\end{equation}
	almost surely, and the proof of the lemma is finished.
	\end{enumerate}
\end{proof}

\begin{remark}\label{rem:62}
	The almost sure limit in Lemma~\ref{lem:61} and in 
	\eqref{limes_a_Ghutm}--\eqref{limes_c_Ghutm} of 
	its proof
	\[
	E_\mu\left( Z_1(v,h) \right) / E_\mu\left( \widetilde{Z}_1 \right) = 
	\lim_{m\to\infty} \widehat{G}_m(v,h) 
	\]
	admits an interpretation: it equals the relative number of spikes in the long run for which 
	subsequent $L$ interspike times realize a particular pattern, expressed by the function 
	$\mathbbm{1}_{[0,v]}h$, 
	\begin{equation}\label{limes_d_Ghutm}
		\lim_{t\to\infty} \frac{1}{N_t} \sum_{n=1}^{N_t} \left( \mathbbm{1}_{[0,v]} h \right) \left( 
		\tau_{n+1} - \tau_n  ,  \ldots   ,   \tau_{n+L} - \tau_{n+L-1}  \right)      
	\end{equation}
	(considering $h\ge 0$ first,  \eqref{limes_d_Ghutm} is a consequence of 
	\eqref{limes_c_Ghutm} exactly as  \eqref{limes_c_Ghutm} was a consequence of  
	\eqref{limes_a_Ghutm} in step \ref{item:20221228_4}) of the proof of Lemma~\ref{lem:61}), 
	and it equals  --in 
	terms of life 
	cycles, cf.\ \eqref{def:Zj_v_h} and \eqref{def_2_life_cycle}-- the ratio 
	\begin{equation}\label{limes_e_Ghutm}
		\frac{ E_\mu\left( \sum_{n=1}^\infty \mathbbm{1}_{ \{ (R_1+1) T \le \tau_n < 
		(R_2+1) T 
				\} }   
			\left( \mathbbm{1}_{[0,v]} h \right) \left( \tau_{n+1}-\tau_n , \ldots , 
			\tau_{n+L}-\tau_{n+L-1}  \right) \right) }
		{ E_\mu\left( 
			\sum_{n=1}^\infty \mathbbm{1}_{ \{ (R_1+1) T \le \tau_n < (R_2+1) T \} }  
			\right) }
	\end{equation}
	between the expected number of spikes in a life cycle for which subsequent  $L$ interspike 
	times realize this particular pattern, divided by the expected number of spikes in the life 
	cycle.
\end{remark}

\begin{proof}[Proof for Theorem~\ref{theo:25}] The special case $h\equiv 1$ in 
Lemma~\ref{lem:61}
	establishes pointwise convergence on $[0,\infty)^L$  of empirical distribution functions 
	$\widehat{G}_m(\cdot):= \widehat{G}_m(\cdot,h\equiv 1)$ associated to the first $m$ 
	observed $L$-tuples of 
	successive interspike times 
	\[
	\left( \tau_{n+1}-\tau_n , \ldots , \tau_{n+L}-\tau_{n+L-1}  \right) \quad,\quad 
	n\in\mathbbm{N} 
	\]
	in \eqref{Ltupels_ISI} to a limit $G_\mu(\cdot)$. The proof of Lemma~\ref{lem:61}, or 
	Remark~\ref{rem:62}, identifies the limit as  
	\[
	G_\mu(v) := E_\mu\left( Z_1(v,h\equiv 1) \right) / E_\mu\left( \widetilde{Z}_1 \right) 
	\quad,\quad 
	v\in [0,\infty)^L   
	\]
	where $Z_1(v,h\equiv 1)$ and $\widetilde{Z}_1$ are given by \eqref{def:Zj_v_h} and 
	\eqref{def:Z_j_tilde}. By \eqref{limes_e_Ghutm}, with $h\equiv 1$ and $v=(v_1,\ldots,v_L)$, 
	\[
		G_\mu(v) =  
		\frac{ E_\mu\left(\sum_{n=1}^\infty \mathbbm{1}_{ \{ (R_1+1) T \le \tau_n < 
		(R_2+1) T 
				\} }   
			\mathbbm{1}_{[0,v_1]} ( \tau_{n+1}-\tau_n ) \cdots \mathbbm{1}_{[0,v_L]} ( 
			\tau_{n+L}-\tau_{n+L-1} ) \right)  }
		{ E_\mu\left(  \sum_{n=1}^\infty \mathbbm{1}_{ \{ (R_1+1) T \le \tau_n < (R_2+1) T 
		\} } 
			\right) }   . 
	\]
	Interspike times are $>\delta_0$ by construction, are finite, and a life cycle contains a finite 
	number of spikes: so the last representation shows that $G_\mu(\cdot)$ is the distribution 
	function of a probability measure on $[0,\infty)^L$, clearly concentrated on $(0,\infty)^L$. 
	Pointwise convergence $\widehat{G}_m \to G_\mu $ on $[0,\infty)^L$ being established, 
	uniformity on $[0,\infty)^L$ follows as in classical proofs of the Glivenko-Cantelli Theorem 
	on $\mathbb{R}^L$. 
	Theorem~\ref{theo:25} is proved. 
\end{proof}

\begin{remark}\label{rem:64}
	\begin{enumerate}[a)]
	\item \label{item:rem64a} The product $\mathbbm{1}_{[0,v]}h$ in Lemma~\ref{lem:61} 
	allows to study the relative 
	frequencies 
	in 
	the long run of particular patterns in groups of $L$ successive interspike times. 
	As an example, consider points $v$ in $[0,\infty)^L$ such that 
	$v_1=v_2=\ldots=v_L=:\bar{v}$ 
	is sufficiently large, and let $h$ denote the indicator of events in $L$-point data sets such 
	that 
	'the distance between upper and lower 10\% quantiles does not exceed  5\% of the median'. 
	Then $G_\mu(v,h)$ gives the proportion of spike times $\tau_n$ in the long run which are to 
	be followed by interspike times $( \tau_{n+1}-\tau_n , \ldots , \tau_{n+L}-\tau_{n+L-1})$ with 
	the following two properties: i) the interspike times do not exceed $\bar{v}$;  ii) the 
	interspike times cluster in the above sense in small neighbourhoods of their median. Under 
	this (or similar) definition of $h$, spikes  will look close-to-equally spaced over large 
	periods 
	of time whenever $G_\mu(v,h)$ is close to one. 
	$\widehat{G}_m(v,h)$ gives the relative frequency of the pattern encoded in $\left( 
	\mathbbm{1}_{[0,v]} h\right)$ in  observed spike trains of length $m$. While we have 
	probably no chance to calculate  $G_\mu(\cdot,h)$ in the sense of an explicit and 
	closed-form 
	expression, we may replace it with $\widehat{G}_m(\cdot,h)$  when $m$ is large. Thus 
	Lemma~\ref{lem:61}
	provides us --asymptotically as $m\to\infty$-- with tools in view of statistical inference. 
\item \label{item:rem64b}  We emphasize that the renewal techniques in the proof of 
Lemma~\ref{lem:61} build on 
	presence 
	of 
	an indicator $\mathbbm{1}_{[0,v]}$ in the product $\mathbbm{1}_{[0,v]} h$: this indicator 
	can 
	not be omitted. 
	\end{enumerate}
\end{remark}

\begin{proof}[Proof for Proposition~\ref{prop:26}:]  Fix $\varepsilon>0$. Choose $L$ large 
enough 
	for $\sum_{\ell>L} e^{ - c_1   \delta_0 \ell }  < \varepsilon$. 
	For every $n\in\mathbbm{N}$, the pair $\left( U_{ \tau_{n+L} } , U_{ (\tau_{n+L+1})^- }  
	\right)$ to be considered in \eqref{Uafterspike_Ubeforespike_neu} 
	\begin{eqnarray*}
		U_{ \tau_{n+L} } &=& \sum_{j=1}^{n+L}  e^{ - c_1 (\tau_{n+L}-\tau_j) } 
		= \sum_{\ell = 1-n }^{L}  e^{ - c_1 (\tau_{n+L}-\tau_{n+\ell}) }  \\
		U_{ (\tau_{n+L+1})^- } &=&   U_{ \tau_{n+L} }  e^{ - c_1 ( \tau_{n+L+1} - \tau_{n+L} ) }  
		= \sum_{\ell = 1-n }^{L}  e^{ - c_1 (\tau_{n+L+1}-\tau_{n+\ell}) }  
	\end{eqnarray*}
	will be approximated with the help of truncated sums by  
	\begin{eqnarray*}
		V_n   &:=& \sum_{j=n}^{n+L}  e^{ - c_1 (\tau_{n+L}-\tau_j) } 
		= \sum_{\ell = 0 }^{L}  e^{ - c_1 (\tau_{n+L}-\tau_{n+\ell}) }  \\
		V_{n+1}^-   &:=&   V_n  e^{ - c_1 ( \tau_{n+L+1} - \tau_{n+L} ) }  
		= \sum_{\ell = 0 }^{L}  e^{ - c_1 (\tau_{n+L+1}-\tau_{n+\ell}) }  . 
	\end{eqnarray*}
	This pair $\left( V_n , V_{n+1}^- \right)$ appears as approximation  
	\eqref{approx_afterspike_approx_beforespike} in Proposition~\ref{prop:26}. 
	
	\begin{enumerate}[1)]
		\item  Since interspike times are $>\delta_0$ by construction in \eqref{def_spiketimes}, 
		we 
	have geometric bounds 
	\[
	0 < U_{ \tau_{n+L} } - V_n  < \sum_{\ell>L} e^{ - c_1 \ell \delta_0 } 
	<\varepsilon 
	\quad,\quad 
	0 < U_{ (\tau_{n+L+1})^- } - V_{n+1}^-  < \sum_{\ell>L} e^{ - c_1 (\ell+1) 
	\delta_0 }  <\varepsilon 
	\]
	uniformly in $n$. This is the first assertion in Proposition~\ref{prop:26}. 
	\item Consider points $x=(x_1,\ldots,x_{L+1})$ in $[\delta_0,\infty)^{L+1}$,  with $\delta_0$ 
	from \eqref{def_spiketimes}. With $c_1$ from \eqref{def_output_process} or 
	\eqref{output_properties}, we define   continuous functions  
	\begin{eqnarray*}
		f_1 :\quad  x   &\longrightarrow&  e^{ - c_1 ( x_L+\ldots+x_1 ) } +  e^{ - c_1 
		(x_L+\ldots+x_2) } + \ldots +  e^{ - c_1 x_L } + 1   \\
		f_2 :\quad  x   &\longrightarrow&  e^{ - c_1 ( x_{L+1}+\ldots+x_1 ) } +  e^{ - c_1 
		(x_{L+1}+\ldots+x_2) } + \ldots +  e^{ - c_1 (x_{L+1}+x_L) } + e^{ - c_1 x_{L+1}  
		}    
	\end{eqnarray*}
	which on $[\delta_0,\infty)^{L+1}$ are bounded by $\sum_{\ell\ge 0} e^{-c_1 \ell\delta_0}$ 
	(the last bound does not depend on $L$). Interspike times being  $>\delta_0$ by 
	construction, we have 
	\begin{eqnarray*}
		V_n   &=& f_1 \left( \tau_{n+1}-\tau_n , \ldots , \tau_{n+L}-\tau_{n+L-1} , 
		\tau_{n+L+1}-\tau_{n+L}  \right) \\
		V_{n+1}^-   &=&  f_2 \left( \tau_{n+1}-\tau_n , \ldots , \tau_{n+L}-\tau_{n+L-1} , 
		\tau_{n+L+1}-\tau_{n+L}  \right).   
	\end{eqnarray*} 
	We can extend $f_1$, $f_2$ to continuous and bounded functions $[0,\infty)^{L+1} \to 
	\mathbb{R}$.
	\item 
	On $[0,\infty)^{L+1}$, we write indistinctly  $\widehat{G}_m$ for the empirical 
	distribution 
	functions in  Theorem~\ref{theo:25} (with $L$ to be replaced by $L+1$) and for the 
	associated 
	empirical measures 
	\[
	\frac{1}{m} \sum_{n=1}^m  \epsilon_{ \left( \tau_{n+1}-\tau_n , \ldots , 
		\tau_{n+L}-\tau_{n+L-1} , \tau_{n+L+1}-\tau_{n+L}  \right) }  \quad,\quad  m\to\infty   . 
	\]
	By Theorem~\ref{theo:25}, empirical measures  $\widehat{G}_m$  converge weakly in 
	$[0,\infty)^{L+1}$ to $G_\mu$ as $m\to\infty$
	(again we write $G_\mu$ both for the limiting probability measure on $[0,\infty)^{L+1}$ and 
	for its distribution function). Introducing the continuous function  
	\[
	F := \left( f_1 , f_2 \right) : [0,\infty)^{L+1} \to [0,\infty)^2 
	\]
	the continuous mapping theorem shows that the empirical measures 
	\[
	\widehat{H}_m = \frac{1}{m} \sum_{n=1}^m  \epsilon_{ \left( V_n , V_{n+1}^- \right) } 
	= 
	\frac{1}{m} \sum_{n=1}^m  \epsilon_{ F \left( \tau_{n+1}-\tau_n , \ldots , 
		\tau_{n+L}-\tau_{n+L-1} , \tau_{n+L+1}-\tau_{n+L}  \right)   } 
	\]
	on $[0,\infty)^2$,  images 
	of $\widehat{G}_m$ under $F$, converge weakly in  $[0,\infty)^2$ to the probability 
	measure 
	$H_\mu$
	\[
	H_\mu(A) := G_\mu\left( F^{-1}(A) \right) \quad,\quad A\in\mathcal{B}([0,\infty)^2), 
	\]
	the image of $G_\mu$ under $F$. Weak convergence in $[0,\infty)^2$ can be reformulated 
	in 
	terms of distribution functions as asserted in Proposition~\ref{prop:26}.
	\end{enumerate}
\end{proof}

\begin{remark}\label{rem:66}
	It is clear that --introducing some more indices-- the last proof can be 
extended to deal with $J$-tuples \eqref{Uafterspike_Ubeforespike_extended} 
\[
\left(
\left(  U_{\tau_{n+j+L}} ,  U_{(\tau_{n+j+L+1})^-}\right)   _{ 0 \le j \le J } 
\right) \quad,\quad    n\in\mathbbm{N} 
\]
as mentioned at the end of Section~\ref{output}. The problem that we need huge values of 
$L$ 
in order to obtain  small values of $\varepsilon$ remains the same. The limit law in such an 
extension of Proposition~\ref{prop:26} has the interpretation of governing patterns in the 
output process which may be observed in the long run.
\end{remark}

\section{Appendix: a discussion of the benchmarks \eqref{benchmarks_T1} in 
	Section~\ref{sect:regular_spiking} }\label{conjectures}

As a complement to Section~\ref{sect:regular_spiking}, we discuss --with notations of 
Section~\ref{sect:regular_spiking}-- the role of benchmarks \eqref{benchmarks_T1} in 
connection with asymptotic properties of the sequence of events $R(T_1)$ as $T_1\to\infty$. 

Our discussion is based on two conjectures concerning  the limit distribution  
$H^{(\vartheta,\tau,\sigma)}$ from Proposition~\ref{prop:22}~\ref{item:22b})   for the 
empirical distribution 
functions $\widehat{H}_n$ of the first $n$ interspike times as $n\to\infty$.

\begin{conjecture}\label{conjecture:A}
For all parameter values $(\vartheta,\tau,\sigma)$, 
$H^{(\vartheta,\tau,\sigma)}$ is continuous and strictly monotone on its interval of support  
$(\alpha^{(\vartheta,\tau,\sigma)},\beta^{(\vartheta,\tau,\sigma)})$ in $(0,\infty)$. 
\end{conjecture}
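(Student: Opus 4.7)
The plan is to prove something stronger than the conjecture, namely that $H^{(\vartheta,\tau,\sigma)}$ admits a density with respect to Lebesgue measure which is strictly positive on an interval $(\alpha^{(\vartheta,\tau,\sigma)}, \beta^{(\vartheta,\tau,\sigma)}) \subset (0,\infty)$; both continuity and strict monotonicity on this interval of support follow at once.

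The first step is a renewal reduction. By the representation in Remark~\ref{rem:62} with $L=1$ and $h\equiv 1$,
\[
H^{(\vartheta,\tau,\sigma)}(v) \;=\; \frac{E_\mu\!\left(\sum_{n} \mathbf{1}_{\{(R_1+1)T\le\tau_n<(R_2+1)T\}}\,\mathbf{1}_{[0,v]}(\tau_{n+1}-\tau_n)\right)}{E_\mu\!\left(\sum_{n} \mathbf{1}_{\{(R_1+1)T\le\tau_n<(R_2+1)T\}}\right)}.
\]
Thus $H^{(\vartheta,\tau,\sigma)}$ is a weighted average of conditional distributions of single interspike times. Applying the strong Markov property at $\tau_n$ and the definition \eqref{def_spiketimes}, each contribution is governed by the law of $T^{\ast} := \inf\{t > \delta_0 : m_t > h_t\}$ under $Q_x$ for $x = \mathbb{X}_{\tau_n}$, a point of the surface $\{m=h\}\subset E$. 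It therefore suffices to prove that for every relevant starting point $x$ the law of $T^{\ast}$ under $Q_x$ has a Lebesgue density that is strictly positive on a nondegenerate interval, and then to verify that the mixture above inherits a strictly positive density on an interval.

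Existence of a density for $T^{\ast}$ would rest on hypoellipticity of the system \eqref{stoch_HH}. Although Brownian noise enters only the $V$-coordinate through $X$, the drift couples $V$ smoothly to the gating variables $(n,m,h)$, which is precisely the framework in which H\"ormander's bracket condition is verified in \cite{HLT-16,HLT-17}, yielding smooth joint transition densities $p_t(x,\cdot)$ on $E$ for every $t>0$. Absolute continuity of the hitting time $T^{\ast}$ can then be obtained either by a Malliavin-calculus argument in the spirit of Nualart--Vives for hitting times of smooth hypersurfaces by hypoelliptic diffusions, or by conditioning on $\mathbb{X}_{T^{\ast}-\varepsilon}$ and exploiting smoothness of the one-step transition over $[T^{\ast}-\varepsilon, T^{\ast}]$ to smear out any potential atom. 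Strict positivity of the density on an interval follows from a Stroock--Varadhan support argument: for every target value $v$ in an appropriate range one constructs a smooth control $w$ such that the corresponding deterministic skeleton equation (with $\sigma\,dW_t$ replaced by $\sigma\dot w(t)\,dt$) produces a trajectory whose first $m$-over-$h$ upcrossing occurs exactly at time $v$; the support theorem then places positive Wiener mass near this control, so the density of $T^{\ast}$ is strictly positive at $v$, and varying $v$ shows the support contains an interval.

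The hardest step will be the Malliavin-calculus verification for the hitting time. The observable $T^{\ast}$ depends on $(m,h)$, two coordinates that receive no direct noise, so nondegeneracy of the Malliavin covariance of the crossing quantity must be transferred from the single noise vector field in the $V$-direction through iterated Lie brackets with the drift of \eqref{stoch_HH}, and this nondegeneracy must be quantitatively uniform over the random post-spike states $\mathbb{X}_{\tau_n}\in\{m=h\}$ relevant to stationary circulation. A secondary delicacy, once single conditional densities are in hand, is to show that the mixture does not concentrate on a measure-zero set: here the support theorem applies again, but must be used carefully since the post-spike law on $E$ is only known implicitly through Harris recurrence and Nummelin splitting.
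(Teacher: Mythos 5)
This statement is a \emph{Conjecture}, and the paper explicitly declines to prove it: in the paragraph following Conjecture~\ref{conjecture:B} the author writes ``Even if we have no proof so far (the proofs for Proposition~\ref{prop:22}~\ref{item:22b}) --or for Theorem~\ref{theo:25} in Section~\ref{sect:proof_prop_2.5}-- yield existence of almost sure limits, and nothing more) we do not doubt that Conjecture~\ref{conjecture:A} holds true for all parameter triplets.'' There is therefore no paper proof to compare yours against; you are attempting to close an open problem that the paper leaves open by design, and what you have written is a research programme, not a proof.

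Assessed on its own terms, the programme is sensible in outline --- renewal reduction to a single life cycle, strong Markov reduction to a post-spike hitting-time law, hypoellipticity for existence of densities, Stroock--Varadhan support theorem for positivity --- and these are indeed the tools that would have to be brought to bear. But each of the substantive steps contains a real gap. First, your reduction to $T^{\ast} = \inf\{t > \delta_0 : m_t > h_t\}$ misreads \eqref{def_spiketimes}: at $\tau_n$ one has $m_{\tau_n}=h_{\tau_n}$ with $m>h$ immediately afterwards, so $\inf\{t>\delta_0 : m_t>h_t\}$ degenerates to $\delta_0$ on the event that $m$ has not downcrossed $h$ by time $\delta_0$. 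The interspike time is a two-stage stopping time (downcrossing after $\tau_n+\delta_0$, then upcrossing), and the correct reduction is to $\tau_{n+1}-\tau_n = \inf\{t>S : m_t>h_t\}$ with $S = \inf\{t>\delta_0 : m_t<h_t\}$, started from $\mathbb{X}_{\tau_n}$. Second, the proposed fallback for absolute continuity --- ``conditioning on $\mathbb{X}_{T^{\ast}-\varepsilon}$ and exploiting smoothness of the one-step transition over $[T^{\ast}-\varepsilon,T^{\ast}]$'' --- is circular as stated: $T^{\ast}$ is a stopping time, so the interval $[T^{\ast}-\varepsilon,T^{\ast}]$ is not accessible before $T^{\ast}$ is known, and smoothness of $P_t(x,\cdot)$ for fixed $t$ does not by itself give absolute continuity of the law of a first hitting time. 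Third, as you yourself note, the central nondegeneracy estimate (Malliavin covariance of the crossing functional, with noise entering only through $V$ and transmitted to $(n,m,h)$ by Lie brackets, uniformly over the post-spike distribution on $\{m=h\}$) is not carried out, and neither is the argument that the life-cycle mixture inherits a density supported on an interval rather than, say, a countable union of intervals. Until these are filled in, the statement remains, correctly, a conjecture.
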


In view of the second conjecture, write $\mathtt{q}^{(\vartheta,\tau,\sigma)}(\alpha) := \inf\{  v 
> 0 
: H^{(\vartheta,\tau,\sigma)}(v) \ge  \alpha \}$ for quantiles of  $H^{(\vartheta,\tau,\sigma)}$, 
$\Delta^{(\vartheta,\tau,\sigma)}$ 
for the median of  $H^{(\vartheta,\tau,\sigma)}$, 
$\mathtt{d}^{(\vartheta,\tau,\sigma)}(\alpha)$ for the difference between upper and lower 
$\alpha$-quantiles, and 
\[
\mathtt{r}^{(\vartheta,\tau,\sigma)}(\alpha) :=  
\frac{\mathtt{d}^{(\vartheta,\tau,\sigma)}(\alpha)}{\Delta^{(\vartheta,\tau,\sigma)}} 
\]
for the ratio 'difference between upper and lower $\alpha$-quantiles divided by the median' in 
$H^{(\vartheta,\tau,\sigma)}$.

\begin{conjecture}\label{conjecture:B}
	There are parameter triplets $(\vartheta,\tau,\sigma)$ such that 
$H^{(\vartheta,\tau,\sigma)}$ satisfies 
\[
\mathtt{r}^{(\vartheta,\tau,\sigma)}(0.05)  < 0.3 \quad,\quad 
\mathtt{r}^{(\vartheta,\tau,\sigma)}(0.1)  < 0.2 \quad,\quad 
\mathtt{r}^{(\vartheta,\tau,\sigma)}(0.25)  < 0.1 
\]
together with 
\[
\left| E_\mu^{(\vartheta,\tau,\sigma)}\left( N_1 \right) \Delta^{(\vartheta,\tau,\sigma)}  -  1  
\right| < 0.05  
\]
where we refer to the almost sure limit $\lim\limits_{t\to\infty}\frac{N_t}{t}$ under 
$(\vartheta,\tau,\sigma)$ in virtue of Proposition~\ref{prop:24}.
\end{conjecture}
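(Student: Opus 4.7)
The plan is to exploit small-noise asymptotics: for $\vartheta > \sup(\mathbbm{I}_{\mathrm{bs}})$ the deterministic Hodgkin-Huxley system of Section~\ref{sect:det_HH} admits a globally attractive stable orbit of some period $P(\vartheta)$, and one expects the stochastic perturbation to preserve this picture as long as the noise is weak. The target regime is therefore $\vartheta > \sup(\mathbbm{I}_{\mathrm{bs}})$ together with small $\sigma$ and relatively large $\tau$ --- precisely the upper-right corner of parameter space flagged with an asterisk in the table of Example~\ref{ex:42}, where the empirical ratios $\mathtt{r}(\alpha,T_1)$ were observed to be dramatically smaller than the thresholds in \eqref{cond_4.1}.

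First, grant Conjecture~\ref{conjecture:A}. Then strict monotonicity and continuity of $H^{(\vartheta,\tau,\sigma)}$, combined with the uniform almost sure convergence $\widehat{H}_n \to H^{(\vartheta,\tau,\sigma)}$ from Proposition~\ref{prop:22}~\ref{item:22b}), yields almost sure convergence of every empirical $\alpha$-quantile to $\mathtt{q}^{(\vartheta,\tau,\sigma)}(\alpha)$; in particular $\Delta(T_1) \to \Delta^{(\vartheta,\tau,\sigma)}$ and $\mathtt{r}(\alpha,T_1) \to \mathtt{r}^{(\vartheta,\tau,\sigma)}(\alpha)$ almost surely as $T_1 \to \infty$. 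Simultaneously $N_{T_1}/T_1 \to E_\mu^{(\vartheta,\tau,\sigma)}(N_1)$ by Proposition~\ref{prop:24}, so that $N_{T_1}\Delta(T_1)/T_1 \to E_\mu^{(\vartheta,\tau,\sigma)}(N_1)\,\Delta^{(\vartheta,\tau,\sigma)}$ almost surely. Thus the four quantities appearing in Conjecture~\ref{conjecture:B} are deterministic almost sure limits of the empirical counterparts in Definition~\ref{def:41}, and it suffices to exhibit one parameter triplet for which these limits lie strictly inside the prescribed intervals.

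Second, one produces such a triplet through a small-noise asymptotic. Holding $\vartheta > \sup(\mathbbm{I}_{\mathrm{bs}})$ fixed and letting $\sigma \to 0$ with $\tau$ bounded away from $0$, a Freidlin-Wentzell-type estimate together with a contraction argument along the stable orbit should give concentration of interspike times at the deterministic period $P(\vartheta)$, in the sense that $\Delta^{(\vartheta,\tau,\sigma)} \to P(\vartheta)$ and $\mathtt{d}^{(\vartheta,\tau,\sigma)}(\alpha) \to 0$, hence $\mathtt{r}^{(\vartheta,\tau,\sigma)}(\alpha) \to 0$ for every $\alpha \in (0,\frac{1}{2})$. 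By Proposition~\ref{prop:24} the asymptotic spike rate $E_\mu^{(\vartheta,\tau,\sigma)}(N_1)$ must then converge to $1/P(\vartheta)$, so that $E_\mu^{(\vartheta,\tau,\sigma)}(N_1)\,\Delta^{(\vartheta,\tau,\sigma)} \to 1$, and all four strict inequalities of Conjecture~\ref{conjecture:B} hold simultaneously for all sufficiently small $\sigma$.

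The hard part is the small-noise step. The diffusion \eqref{stoch_HH} is highly degenerate --- randomness enters only through the $V$-equation via the Ornstein-Uhlenbeck driver \eqref{def_X_HH} --- so the classical Freidlin-Wentzell theory for exit times from tubular neighbourhoods of a stable limit cycle does not apply off the shelf. One would need to combine the H\"ormander-type hypoellipticity that already underlies the minorization in Theorem~\ref{theo:21}~\ref{item:21d}) with quantitative concentration estimates along the orbit, and then feed these through the renewal decomposition of Section~\ref{sect:proof_prop_2.5} to transfer weak-noise control on interspike times into control of the population quantiles of $H^{(\vartheta,\tau,\sigma)}$ uniform enough for a transfer via Conjecture~\ref{conjecture:A}. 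Conjecture~\ref{conjecture:A} itself --- ruling out atoms and flats in the limit distribution of interspike times --- is a substantial prerequisite that presumably demands a separate hypoellipticity argument on the small set of Theorem~\ref{theo:21}~\ref{item:21d}).
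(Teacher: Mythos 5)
The statement you are asked to prove is labelled a \emph{conjecture} in the paper, and the paper deliberately supplies no proof: the authors state explicitly that the renewal techniques behind Proposition~\ref{prop:22}~\ref{item:22b}) and Theorem~\ref{theo:25} ``yield existence of almost sure limits, and nothing more,'' and offer only the simulation evidence of Example~\ref{ex:42} (the asterisked $(\tau,\sigma)$ entries) as support. So there is no paper proof against which to match your argument; the honest baseline is ``unproved and believed.''

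Your proposal is, correctly, a sketch of a research programme rather than a proof, and you say so yourself. The first paragraph, after granting Conjecture~\ref{conjecture:A}, does not advance the conjecture at all --- it reproduces the convergences $\Delta(T_1)\to\Delta^{(\vartheta,\tau,\sigma)}$, $\mathtt{r}(\alpha,T_1)\to\mathtt{r}^{(\vartheta,\tau,\sigma)}(\alpha)$, $N_{T_1}\Delta(T_1)/T_1 \to E_\mu(N_1)\Delta^{(\vartheta,\tau,\sigma)}$, which is precisely what the paper already uses in the proof of Proposition~\ref{prop:73}, where Conjecture~\ref{conjecture:B} is an \emph{input}, not an output. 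The genuine content is confined to your second paragraph, the small-noise step, and there the gap is exactly where you locate it: the diffusion \eqref{stoch_HH} is hypoelliptic with noise entering only through $V$ via the OU driver, the dynamics are not gradient-like, and the ``attractor'' to which you want concentration is a limit cycle of a four-dimensional (five with $X$) non-reversible system, so neither classical Freidlin--Wentzell exit-time theory nor any of the more recent non-equilibrium extensions applies off the shelf. Two further obstructions beyond what you list: (i) even if interspike times concentrate near $P(\vartheta)$ as $\sigma\to 0$, one must show that the invariant measure $\mu^{(\vartheta,\tau,\sigma)}$ itself concentrates on the tube around the orbit and not on a residual mass near the (unstable, but for the randomly perturbed system still visitable) equilibrium --- otherwise $E_\mu(N_1)\,\Delta^{(\vartheta,\tau,\sigma)}$ need not be close to $1$ even when the conditional law of interspike times given spiking is tight; (ii) the spike-time functional \eqref{def_spiketimes} depends on the $(m,h)$ crossing, and one needs a quantitative statement that, along the tube, the stopping time $\tau_{j+1}-\tau_j$ is a Lipschitz (or at least continuous) functional of the perturbation, which is not automatic for a stopping time defined by a level crossing. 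Your approach is sensible and is aligned with the heuristic the paper itself offers (large $\vartheta$, small $\sigma$ relative to $\tau$, the asterisked regime of Example~\ref{ex:42}); but as written it does not close the gap, and you are right to flag this. If you want the review verdict in one line: you have correctly identified both the right parameter regime and the right missing ingredient (a degenerate-diffusion small-noise limit theorem for crossing times of a non-gradient periodic attractor), and that ingredient is presently not available.
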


Even if we have no proof so far (the proofs for Proposition~\ref{prop:22}~~\ref{item:22b})  
--or for 
Theorem~\ref{theo:25}  in Section~\ref{sect:proof_prop_2.5}-- yield existence of almost sure 
limits, and nothing 
more) we do not doubt that  Conjecture~\ref{conjecture:A}  holds true for all parameter 
triplets  
$(\vartheta,\tau,\sigma)$, and that Conjecture~\ref{conjecture:B} holds true whenever 
the signal 
$\vartheta$ is 
large and --depending on the value of $\sigma$-- the back-driving force $\tau$ is large 
enough or 
--depending on the value of $\tau$-- the volatility $\sigma$ is small enough. As an example, 
both 
Conjectures~\ref{conjecture:A} and \ref{conjecture:B} should hold true for the parameter 
triplets marked with an Asterisk $^*$ in the 
scheme of Example~\ref{ex:42}.

\begin{proposition}\label{prop:73}
For parameter triplets $(\vartheta,\tau,\sigma)$ satisfying both Conjectures~\ref{conjecture:A} 
and \ref{conjecture:B}, 
the event 
\[
R(\infty) :=\liminf\limits_{T_1\in\mathbbm{N} , T_1\to\infty} R(T_1) 
\]
in $\mathcal{G}_\infty = \mathcal{C}$ is of full measure under 
$Q_\mu^{(\vartheta,\tau,\sigma)}$.
\end{proposition}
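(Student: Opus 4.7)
The plan is to deduce $Q_\mu^{(\vartheta,\tau,\sigma)}(R(\infty)) = 1$ by showing that, on a single set of full measure, each of the four conditions defining $R(T_1)$ holds for all $T_1 \in \mathbbm{N}$ beyond some (random) threshold. Since a finite intersection of events that hold eventually still holds eventually, this gives $\omega \in \bigcup_{N} \bigcap_{T_1 \ge N} R(T_1) = \liminf_{T_1} R(T_1)$ a.s.

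First I would observe that the bound $|E_\mu^{(\vartheta,\tau,\sigma)}(N_1)\,\Delta^{(\vartheta,\tau,\sigma)} - 1| < 0.05$ in Conjecture~\ref{conjecture:B} forces $E_\mu^{(\vartheta,\tau,\sigma)}(N_1) > 0$. Combined with Proposition~\ref{prop:24}, this yields $N_{T_1}/T_1 \to E_\mu^{(\vartheta,\tau,\sigma)}(N_1) > 0$ a.s., hence $N_{T_1} \to \infty$ a.s.; in particular the condition $N_{T_1} > 20$ holds for all large $T_1$ almost surely.

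Second, I would establish almost sure convergence of empirical quantiles. Proposition~\ref{prop:22}~\ref{item:22b}) gives uniform convergence $\widehat{H}_n \to H^{(\vartheta,\tau,\sigma)}$ on $[0,\infty)$ a.s.; substituting the random index $n = N_{T_1} - 1 \to \infty$ yields the same for $\widehat{H}_{T_1}$. Under Conjecture~\ref{conjecture:A}, the limit $H^{(\vartheta,\tau,\sigma)}$ is continuous and strictly increasing on its support $(\alpha^{(\vartheta,\tau,\sigma)},\beta^{(\vartheta,\tau,\sigma)}) \subset (0,\infty)$. This is exactly the setting of the classical argument showing that uniform convergence of distribution functions together with continuity plus strict monotonicity of the limit implies pointwise convergence of the associated quantile functions at every interior level $\alpha \in (0,1)$. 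It follows that $\Delta(T_1) \to \Delta^{(\vartheta,\tau,\sigma)}$ a.s., $\mathtt{d}(\alpha,T_1) \to \mathtt{d}^{(\vartheta,\tau,\sigma)}(\alpha)$ a.s. for $\alpha \in \{0.05,0.1,0.25\}$, and since $\Delta^{(\vartheta,\tau,\sigma)} > 0$, also $\mathtt{r}(\alpha,T_1) \to \mathtt{r}^{(\vartheta,\tau,\sigma)}(\alpha)$ a.s. Multiplying with $N_{T_1}/T_1 \to E_\mu^{(\vartheta,\tau,\sigma)}(N_1)$ from the first step,
\[
\frac{N_{T_1}\Delta(T_1)}{T_1} \longrightarrow E_\mu^{(\vartheta,\tau,\sigma)}(N_1)\cdot \Delta^{(\vartheta,\tau,\sigma)} \qquad \text{a.s.}
\]

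Finally I would invoke Conjecture~\ref{conjecture:B}: the four limiting quantities obtained above satisfy the \emph{strict} versions of the inequalities in \eqref{zusatzcond_4.1}--\eqref{cond_4.1}. Since convergence of a real random variable to a limit that lies strictly inside a closed inequality region guarantees that the variable itself eventually lies in that closed region, each of the four conditions in the definition of $R(T_1)$ holds for all $T_1$ beyond a random threshold, almost surely. Intersecting four full-measure sets concludes the proof.

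The main obstacle is the second step: one needs to record cleanly that uniform convergence of empirical distribution functions, combined with continuity and strict monotonicity of the limit on its support, implies convergence of the empirical quantile functions at the required levels, and additionally that the sample size $n = N_{T_1}-1$ is itself random but tends to $\infty$ along a subsequence dictated by time. The latter is handled by composing the deterministic Glivenko--Cantelli-type convergence with the sure divergence $N_{T_1} \to \infty$; the former is a classical argument that depends crucially on Conjecture~\ref{conjecture:A} to rule out flats and jumps in the limit DF which would otherwise obstruct quantile convergence.
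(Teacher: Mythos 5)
Your proof is correct and follows essentially the same route as the paper: uniform convergence of empirical interspike-time distribution functions (Proposition~\ref{prop:22}~\ref{item:22b})) plus Conjecture~\ref{conjecture:A} to pass to convergence of quantiles, then Conjecture~\ref{conjecture:B} to see that the limiting quantities sit strictly inside the closed regions defining $R(T_1)$, so that each defining condition holds for eventually all $T_1\in\mathbbm{N}$ almost surely. The only cosmetic difference is that you obtain $N_{T_1}\to\infty$ from Proposition~\ref{prop:24} together with $E_\mu^{(\vartheta,\tau,\sigma)}(N_1)>0$ (itself a consequence of Conjecture~\ref{conjecture:B}), whereas the paper cites Proposition~\ref{prop:22}~\ref{item:22a}) for the same fact.
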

\begin{proof}
	Accept Conjectures~\ref{conjecture:A} and \ref{conjecture:B} for parameters 
	$(\vartheta,\tau,\sigma)$ under consideration. 
	
	\begin{enumerate}[1)]
	\item Since by Proposition~\ref{prop:22}~\ref{item:22b})  the empirical distribution 
	functions  
	$\widehat{H}_n$ 
	associated to interspike times 
	\[
	\left(  \tau_2-\tau_1  ,  \ldots  , \tau_{n+1}-\tau_n  \right)   \quad,\quad n\to\infty
	\]
	converge almost surely under $(\vartheta,\tau,\sigma)$, uniformly on $[0,\infty)$, to a limit 
	distribution function  $H^{(\vartheta,\tau,\sigma)}$, continuity and  strict  monotonicity 
	of the limit --stated in Conjecture~\ref{conjecture:A}-- imply almost sure convergence of 
	$\alpha$-quantiles, 
	$0<\alpha<1$. The same assertion then holds for empirical distribution functions associated 
	to interspike times  \eqref{regular_ISI} observed up to time $T_1$ 
	\[
	\left(  \tau_2-\tau_1  ,  \ldots  , \tau_{N_{T_1}}-\tau_{N_{T_1}-1}  \right)  \quad,\quad 
	T_1\to\infty 
	\]
	with the same limit distribution function $H^{(\vartheta,\tau,\sigma)}$. 
	As a consequence, we have almost sure convergence of $\alpha$-quantiles in data sets 
	\eqref{regular_ISI} to those of $H^{(\vartheta,\tau,\sigma)}$. In particular, with notations   
	\eqref{regular_ISI}--\eqref{ratio_regular_ISI}, 
	\begin{equation}\label{crucial_convergences_1}
		\Delta(T_1)\longrightarrow \Delta^{(\vartheta,\tau,\sigma)} \quad,\quad 
		\mathtt{r}(\alpha,T_1)\longrightarrow \mathtt{r}^{(\vartheta,\tau,\sigma)} (\alpha)  
	\end{equation}
	converge almost surely as  $T_1\to\infty$; in the limit appear the corresponding quantities 
	defined from the limit distribution $H^{(\vartheta,\tau,\sigma)}$. 
	\item 
	Accepting Conjecture~\ref{conjecture:B}, we have  
	\[
	\mathtt{r}^{(\vartheta,\tau,\sigma)}(0.05)  < 0.3 \quad,\quad 
	\mathtt{r}^{(\vartheta,\tau,\sigma)}(0.1)  < 0.2 \quad,\quad 
	\mathtt{r}^{(\vartheta,\tau,\sigma)}(0.25)  < 0.1 . 
	\]
	In virtue of \eqref{crucial_convergences_1}, as $T_1\in\mathbbm{N}$ and $T_1\to\infty$, we 
	do have 
	\begin{equation}\label{crucial_convergences_2}
		\mathtt{r}(0.05,T_1) \le 0.3  \quad,\quad 
		\mathtt{r}(0.1,T_1) \le 0.2   \quad,\quad 
		\mathtt{r}(0.25,T_1) \le 0.1  
	\end{equation}
	for eventually all $T_1\in\mathbbm{N}$. Similarly, accepting Conjecture~\ref{conjecture:B} 
	we have 
	\[
	\left| E_\mu^{(\vartheta,\tau,\sigma)}\left( N_1 \right) \Delta^{(\vartheta,\tau,\sigma)}  -  
	1  \right| < 0.05  
	\]
	for the limit distribution  $H^{(\vartheta,\tau,\sigma)}$, and thus, combining 
	\eqref{crucial_convergences_1} with Proposition~\ref{prop:24}, 
	\begin{equation}\label{crucial_convergences_3} 
		\left| \frac{ N_{T_1} \Delta(T_1) }{ T_1 }  -  1  \right|\le 0.05  
	\end{equation}
	for eventually all $T_1\in\mathbbm{N}$. By definition of the events 
	$R(T_1)\in\mathcal{G}_{T_1}$ in Definition~\ref{def:41} and by 
	Proposition~\ref{prop:22}~\ref{item:22a}) , the 
	assertion is proved.
\end{enumerate}
\end{proof}

\begin{remark}\label{rem:74}
	By \eqref{crucial_convergences_1}, Conjecture~\ref{conjecture:A} implies 
that for all 
$(\vartheta,\tau,\sigma)$, the random variables  \eqref{benchmarks_T1}
\[
\sum_{j\ge 0} e^{ - c_1 \Delta(T_1) j } = \frac{ 1 }{ 1 -  e^{ - c_1 \Delta(T_1) } } 
\quad,\quad 
\sum_{j\ge 0} e^{ - c_1 \Delta(T_1) (j+1) } = \frac{ e^{ - c_1 \Delta(T_1) } }{ 1 -  e^{ - c_1 
		\Delta(T_1) } } 
\]
converge almost surely as $T_1\to\infty$ to the deterministic limits 
\begin{equation}\label{benchmarks_limit}
	\sum_{j\ge 0} e^{ - c_1 \Delta^{(\vartheta,\tau,\sigma)} j } = \frac{ 1 }{ 1 -  e^{ - c_1 
			\Delta^{(\vartheta,\tau,\sigma)}  } } 
	\quad,\quad 
	\sum_{j\ge 0} e^{ - c_1 \Delta^{(\vartheta,\tau,\sigma)}  (j+1) } = \frac{ e^{ - c_1 
			\Delta^{(\vartheta,\tau,\sigma)}  } }{ 1 -  e^{ - c_1 \Delta^{(\vartheta,\tau,\sigma)} } } 
\end{equation}
defined in terms of the median of the limit distribution $H^{(\vartheta,\tau,\sigma)}$. 
\end{remark}

\begin{remark}\label{rem:75}
	Assume that $(\vartheta,\tau,\sigma)$ satisfies Conjectures~\ref{conjecture:A} and 
	\ref{conjecture:B}.    Then, for every 
	$L\in\mathbbm{N}$ fixed, the limit distribution function $G_\mu^{(\vartheta,\tau,\sigma)}$ 
	on 
	$[0,\infty)^L$ from Theorem~\ref{theo:25}, almost sure limit of empirical distribution 
	functions 
	$\widehat{G}_m$ associated to the first $m$ $L$-tuples   out of 
	\[
	\left( \tau_{n+1}-\tau_n , \ldots ,  \tau_{n+L}-\tau_{n+L-1} \right) \quad,\quad 
	n\in\mathbbm{N} 
	\]
	as $m\to\infty$, is concentrated on neighbourhoods of the point 
\[	
\left( \Delta^{(\vartheta,\tau,\sigma)} , \ldots , \Delta^{(\vartheta,\tau,\sigma)}  \right) 
	\in [0,\infty)^L
	\]
	in the sense that marginals, i.e.\ image measures under projection on single coordinates 
	$i\in\{1,\ldots,L\}$, 
	admit $\Delta^{(\vartheta,\tau,\sigma)}$ as their median and bounds 
	\[
	\mathtt{r}^{(\vartheta,\tau,\sigma)}(0.05) < 0.3, 
	\mathtt{r}^{(\vartheta,\tau,\sigma)}(0.1) < 0.2, 
	\mathtt{r}^{(\vartheta,\tau,\sigma)}(0.25) < 0.1  
	\]
	on ratios of distances between upper and lower quantiles divided by the median. 
	To see this, it is sufficient to note that necessarily every marginal of the law 
	$G_\mu^{(\vartheta,\tau,\sigma)}$ in Theorem~\ref{theo:25} coincides with the probability 
	measure 
	$H^{(\vartheta,\tau,\sigma)}$ of Proposition~\ref{prop:22}~\ref{item:22b}) .
\end{remark}

\begin{remark}\label{rem:76}
Grant Conjectures~\ref{conjecture:A} and \ref{conjecture:B} for 
$(\vartheta,\tau,\sigma)$.    Then 
\begin{enumerate}[a)]
\item\label{item:rem76a} expressions  \eqref{benchmarks_limit} provide deterministic 
benchmarks for the location of 
 $(U_{\tau_\ell} , U_{\tau_{ (\ell+1)^-} })$ to be observed  under 
$(\vartheta,\tau,\sigma)$ in the long run as $\ell\to\infty$,  
\item\label{item:rem76b} expressions  \eqref{benchmarks_T1} provide 
$\mathcal{G}_{T_1}$-measurable 
approximations to  \eqref{benchmarks_limit}, converging to \eqref{benchmarks_limit} as 
$T_1\to\infty$. 
\end{enumerate}
\end{remark}

\begin{proof} 
	\begin{enumerate}[i)] 
		\item Note first that the sequence of local maxima $(U_{\tau_\ell})_\ell$ in the output 
		process 
	$U$ is bounded:   
	interspike times being bounded away from $0$ by definition in \eqref{def_spiketimes}, so 
	local maxima of $U$ --using \eqref{output_properties}-- take values in the compact set 
	$K:=\left[ 1 ,\sum_{j\ge 0} e^{- c_1 j \delta_0 } \right]$. 
	\item\label{item:pointiipage36}  For arbitrary $\ell,n,m \in\mathbbm{N}$ we have 
	$U_{\tau_\ell} = 
	\sum\limits_{j=1}^\ell 
	e^{- c_1 ( \tau_\ell-\tau_j ) } $ and thus 
	\begin{equation}\label{U_zerlegung}
		U_{\tau_{n+m}} = U_{\tau_n} e^{ - c_1 (\tau_{n+m} - \tau_n) }  + 
		\sum_{j=1}^m e^{- c_1 ( \tau_{n+m}-\tau_{n+j} ) }. 
	\end{equation}
	If for $L$ large enough an $L$-tuple of interspike times as considered in 
	Remark~\ref{rem:75}  
	\[
	\left( \tau_{n+1}-\tau_n , \ldots   \tau_{n+L}-\tau_{n+L-1} \right) \quad,\quad 
	n\in\mathbbm{N} 
	\]
	is well concentrated at $\Delta^{(\vartheta,\tau,\sigma)}$, then values of 
	$U_{\tau_{n+L}}$ will be close to 
	\[
	u_\infty^{(\vartheta,\tau,\sigma)} := \sum_{j\ge 0} e^{- c_1 j \Delta^{(\vartheta,\tau,\sigma)} 
	} 
	= \frac{1}{ 1 - e^{ - c_1 \Delta^{(\vartheta,\tau,\sigma)}} }
	\]
	no matter where $U_{\tau_n}$ was located in $K$. Hence small neighbourhoods of 
	$u_\infty^{(\vartheta,\tau,\sigma)}$ are  attainable for the process  $(U_{\tau_\ell})_\ell$ of 
	local maxima of the output process. 
	\item Whenever   $U_{\tau_n}$ in \eqref{U_zerlegung} is close to  
	$u_\infty^{(\vartheta,\tau,\sigma)}$  for some $n$, an $L$-tuple of interspike times well 
	concentrated at $\Delta^{(\vartheta,\tau,\sigma)}$ as in \ref{item:pointiipage36})  allows to 
	write in good  
	approximation 
	\[
	U_{\tau_{n+L}}\approx u_\infty^{(\vartheta,\tau,\sigma)} e^{- c_1 L 
	\Delta^{(\vartheta,\tau,\sigma)} }  + 
	\sum_{j=1}^L e^{- c_1 ( L-j ) \Delta^{(\vartheta,\tau,\sigma)} } = 
	u_\infty^{(\vartheta,\tau,\sigma)}. 
	\]
	Hence also $U_{\tau_{n+L}}$ will be close to $u_\infty^{(\vartheta,\tau,\sigma)}$. This 
	shows that  small neighbourhoods of $u_\infty^{(\vartheta,\tau,\sigma)}$ will be attained 
	infinitely often by the process of local maxima in the long run. 
	\item  Thus asymptotically as $\ell\to\infty$,  
	pairs $\left( U_{\tau_\ell} , U_{ ( \tau_{\ell+1} )^- }\right)$ will visit small 
	neighbourhoods of 
	\[
	\left( u_\infty^{(\vartheta,\tau,\sigma)} , u_\infty^{(\vartheta,\tau,\sigma)} e^{ - c_1 
		\Delta^{(\vartheta,\tau,\sigma)} }  \right)= \left( \frac{1}{ 1 - e^{ - c_1 
			\Delta^{(\vartheta,\tau,\sigma)}} } ,\frac{ e^{ - c_1 \Delta^{(\vartheta,\tau,\sigma)} } }{ 
			1 - e^{ 
			- c_1 \Delta^{(\vartheta,\tau,\sigma)}} } \right)
	\]
	infinitely often. In this sense, the deterministic expression \eqref{benchmarks_limit} 
	provides a benchmark for the location of pairs $\left( U_{\tau_\ell} , U_{ ( \tau_{\ell+1} 
	)^- }\right)$ under $(\vartheta,\tau,\sigma)$ in the long run as $\ell\to\infty$, in virtue of 
	our two conjectures. This is \ref{item:rem76a}). 
	By \eqref{crucial_convergences_1}, $\Delta(T_1)$ converges to 
	$\Delta^{(\vartheta,\tau,\sigma)} $ almost surely as $T_1\to\infty$. So if we have observed 
	the stochastic neuron up to time $T_1$, for $T_1$ large enough, expressions 
	\eqref{benchmarks_T1}
	\[
	\left( \frac{1}{ 1 - e^{ - c_1 \Delta(T_1) } } ,\frac{ e^{ - c_1 \Delta(T_1) } }{ 1 - e^{ - c_1 
	\Delta(T_1) } } \right)
	\]
	are $\mathcal{G}_{T_1}$-measurable approximations to the benchmark in 
	\eqref{benchmarks_limit}. This is \ref{item:rem76b}). 
\end{enumerate}
\end{proof}



\begin{thebibliography}{10}
	
	\bibitem{ADR-69}
	Jacques Az\'{e}ma, Marie Duflo, and Daniel Revuz, \emph{Mesure invariante des
		processus de {M}arkov r\'{e}currents}, S\'{e}minaire de {P}robabilit\'{e}s,
	{III} ({U}niv. {S}trasbourg, 1967/68), Springer LNM 88, Berlin, 1969,
	pp.~24--33. \MR{0260014}
	
	\bibitem{Bre-81}
	Pierre Br\'{e}maud, \emph{Point processes and queues}, Martingale dynamics,
	Springer Series in Statistics, Springer-Verlag, New York-Berlin, 1981.
	\MR{636252}
	
	\bibitem{CDFT-21}
	Rapha\"{e}l Cerf, Paolo Dai~Pra, Marco Formentin, and Daniele Tovazzi,
	\emph{Rhythmic behavior of an {I}sing model with dissipation at low
		temperature}, ALEA Lat. Am. J. Probab. Math. Stat. \textbf{18} (2021), no.~1,
	439--467, \href{http://dx.doi.org/10.30757/alea.v18-20}{\path{doi}}.
	\MR{4219671}
	
	\bibitem{DL-17}
	Susanne Ditlevsen and Eva L\"{o}cherbach, \emph{Multi-class oscillating systems
		of interacting neurons}, Stochastic Process. Appl. \textbf{127} (2017),
	no.~6, 1840--1869,
	\href{http://dx.doi.org/10.1016/j.spa.2016.09.013}{\path{doi}}. \MR{3646433}
	
	\bibitem{ET-10}
	G.~Bard Ermentrout and David~H. Terman, \emph{Mathematical foundations of
		neuroscience}, Interdisciplinary Applied Mathematics, vol.~35, Springer, New
	York, 2010, \href{http://dx.doi.org/10.1007/978-0-387-87708-2}{\path{doi}}.
	\MR{2674516}
	
	\bibitem{HH-52}
	Alan~L Hodgkin and Andrew~F Huxley, \emph{A quantitative description of
		membrane current and its application to conduction and excitation in nerve},
	The Journal of physiology \textbf{117} (1952), no.~4, 500--544.
	
	\bibitem{Hol-20}
	Simon Holbach, \emph{Positive {H}arris recurrence for degenerate diffusions
		with internal variables and randomly perturbed time-periodic input},
	Stochastic Process. Appl. \textbf{130} (2020), no.~11, 6965--7003,
	\href{http://dx.doi.org/10.1016/j.spa.2020.07.005}{\path{doi}}. \MR{4158809}
	
	\bibitem{Hoe-21}
	Reinhard H\"{o}pfner, \emph{Polynomials under {O}rnstein-{U}hlenbeck noise and
		an application to inference in stochastic {H}odgkin-{H}uxley systems}, Stat.
	Inference Stoch. Process. \textbf{24} (2021), no.~1, 35--59,
	\href{http://dx.doi.org/10.1007/s11203-020-09226-0}{\path{doi}}. \MR{4236595}
	
	\bibitem{HK-10}
	Reinhard H\"{o}pfner and Yury Kutoyants, \emph{Estimating discontinuous
		periodic signals in a time inhomogeneous diffusion}, Stat. Inference Stoch.
	Process. \textbf{13} (2010), no.~3, 193--230,
	\href{http://dx.doi.org/10.1007/s11203-010-9046-7}{\path{doi}}. \MR{2729649}
	
	\bibitem{HL-03}
	Reinhard H\"{o}pfner and Eva L\"{o}cherbach, \emph{Limit theorems for null
		recurrent {M}arkov processes}, Mem. Amer. Math. Soc. \textbf{161} (2003),
	no.~768, vi+92, \href{http://dx.doi.org/10.1090/memo/0768}{\path{doi}}.
	\MR{1949295}
	
	\bibitem{HLT-16}
	Reinhard H\"{o}pfner, Eva L\"{o}cherbach, and Mich\`ele Thieullen,
	\emph{Ergodicity and limit theorems for degenerate diffusions with time
		periodic drift. {A}pplication to a stochastic {H}odgkin-{H}uxley model},
	ESAIM Probab. Stat. \textbf{20} (2016), 527--554,
	\href{http://dx.doi.org/10.1051/ps/2016020}{\path{doi}}. \MR{3581833}
	
	\bibitem{HLT-16a}
	Reinhard H\"{o}pfner, Eva L\"{o}cherbach, and Mich\`{e}le Thieullen,
	\emph{Ergodicity for a stochastic {H}odgkin-{H}uxley model driven by
		{O}rnstein-{U}hlenbeck type input}, Ann. Inst. Henri Poincar\'{e} Probab.
	Stat. \textbf{52} (2016), no.~1, 483--501,
	\href{http://dx.doi.org/10.1214/14-AIHP647}{\path{doi}}. \MR{3449311}
	
	\bibitem{HLT-17}
	\bysame, \emph{Strongly degenerate time inhomogeneous {SDE}s: densities and
		support properties. {A}pplication to {H}odgkin-{H}uxley type systems},
	Bernoulli \textbf{23} (2017), no.~4A, 2587--2616,
	\href{http://dx.doi.org/10.3150/16-BEJ820}{\path{doi}}. \MR{3648039}
	
	\bibitem{Hum-19}
	C{\'e}dric Hummel, \emph{Netzwerke von {H}odgkin-{H}uxley {N}euronen}, Institut
	f{\"u}r Mathematik, Universit{\"a}t Mainz, Masterarbeit (2019).
	
	\bibitem{Izh-07}
	Eugene~M. Izhikevich, \emph{Dynamical systems in neuroscience: the geometry of
		excitability and bursting}, Computational Neuroscience, MIT Press, Cambridge,
	MA, 2007. \MR{2263523}
	
	\bibitem{JS-87}
	Jean Jacod and Albert~N. Shiryaev, \emph{Limit theorems for stochastic
		processes}, Grundlehren der mathematischen Wissenschaften [Fundamental
	Principles of Mathematical Sciences], vol. 288, Springer-Verlag, Berlin,
	1987, \href{http://dx.doi.org/10.1007/978-3-662-02514-7}{\path{doi}}.
	\MR{959133}
	
	\bibitem{LL-08}
	Eva L\"{o}cherbach and Dasha Loukianova, \emph{On {N}ummelin splitting for
		continuous time {H}arris recurrent {M}arkov processes and application to
		kernel estimation for multi-dimensional diffusions}, Stochastic Process.
	Appl. \textbf{118} (2008), no.~8, 1301--1321,
	\href{http://dx.doi.org/10.1016/j.spa.2007.09.003}{\path{doi}}. \MR{2427041}
	
	\bibitem{Met-82}
	Michel M\'{e}tivier, \emph{Semimartingales}, A course on stochastic processes,
	de Gruyter Studies in Mathematics, vol.~2, Walter de Gruyter \& Co.,
	Berlin-New York, 1982. \MR{688144}
	
	\bibitem{Num-78}
	Esa Nummelin, \emph{A splitting technique for {H}arris recurrent {M}arkov
		chains}, Z. Wahrsch. Verw. Gebiete \textbf{43} (1978), no.~4, 309--318,
	\href{http://dx.doi.org/10.1007/BF00534764}{\path{doi}}. \MR{0501353}
	
	\bibitem{Num-85}
	\bysame, \emph{General irreducible {M}arkov chains and nonnegative operators},
	Cambridge Tracts in Mathematics, vol.~83, Cambridge University Press,
	Cambridge, 1984,
	\href{http://dx.doi.org/10.1017/CBO9780511526237}{\path{doi}}. \MR{776608}
	
	\bibitem{RY-91}
	Daniel Revuz and Marc Yor, \emph{Continuous martingales and {B}rownian motion},
	Grundlehren der mathematischen Wissenschaften [Fundamental Principles of
	Mathematical Sciences], vol. 293, Springer-Verlag, Berlin, 1991,
	\href{http://dx.doi.org/10.1007/978-3-662-21726-9}{\path{doi}}. \MR{1083357}
	
	\bibitem{RM-80}
	John Rinzel and Robert~N. Miller, \emph{Numerical calculation of stable and
		unstable periodic solutions to the {H}odgkin-{H}uxley equations}, Math.
	Biosci. \textbf{49} (1980), no.~1-2, 27--59,
	\href{http://dx.doi.org/10.1016/0025-5564(80)90109-1}{\path{doi}}.
	\MR{572841}
	
	\bibitem{Wi-85}
	Hermann Witting, \emph{Mathematische {S}tatistik. {I}}, Parametrische Verfahren
	bei festem Stichprobenumfang. [Parametric methods for fixed sample size], B.
	G. Teubner, Stuttgart, 1985,
	\href{http://dx.doi.org/10.1007/978-3-322-90150-7}{\path{doi}}. \MR{943833}
	
	\bibitem{WMF-95}
	Hermann Witting and Ulrich M\"{u}ller-Funk, \emph{Mathematische {S}tatistik.
		{II}}, Asymptotische Statistik: parametrische Modelle und nichtparametrische
	Funktionale. [Asymptotic statistics: parametric models and nonparametric
	functionals], B. G. Teubner, Stuttgart, 1995,
	\href{http://dx.doi.org/10.1007/978-3-322-90152-1}{\path{doi}}. \MR{1363716}
	
\end{thebibliography}

\providecommand{\bysame}{\leavevmode\hbox to3em{\hrulefill}\thinspace}



\nocite{*}
\end{document}